\documentclass[11pt]{article}
\usepackage{graphicx} 

\usepackage[margin=1in]{geometry}
\usepackage{setspace}

\usepackage{tikz}
\usetikzlibrary{automata,arrows,positioning,calc}

\usepackage[maxbibnames=99]{biblatex}
\makeatletter
\newcommand{\rom}[1]{\expandafter\@slowromancap\romannumeral #1@}
\makeatother

\usepackage{subcaption}
\usepackage{tikz}
\usepackage{amssymb}
\usetikzlibrary{automata, positioning, arrows}
\usepackage{amsthm}
\newtheorem{theorem}{Theorem}[section]
\newtheorem{corollary}[theorem]{Corollary} 
\newtheorem{lemma}[theorem]{Lemma} 
\theoremstyle{definition}
\newtheorem{definition}[theorem]{Definition} 

\newtheorem{prop}[theorem]{Proposition} 
\usepackage{amsmath}
\newtheorem{assumption}{Assumption}
\usepackage{geometry}
\usepackage{comment}   

\usepackage{color}

\newcommand{\BB}{\mathbb B}
\newcommand{\DD}{\mathbb D}
\newcommand{\EE}{\mathbb E}
\newcommand{\GG}{\mathbb G}
\newcommand{\LL}{\mathbb L}
\newcommand{\PP}{\mathbb P}
\newcommand{\RR}{\mathbb R}
\newcommand{\SQ}{\mathbb{S}\mathbb{Q}}
\newcommand{\TT}{\mathbb T}

\newcommand{\sE}{\mathcal E}
\newcommand{\sD}{\mathcal D}
\newcommand{\sG}{\mathcal G}
\newcommand{\sL}{\mathcal L}
\newcommand{\sT}{\mathcal T}

\newcommand{\NN}{\mathbb N}
\newcommand{\sF}{\mathcal F}
\newcommand{\Tau}{\mathcal T}

\title{A multi-player optimal stopping game based on a spider Brownian bridge}
\author{David Hobson, Jingfei Liu}
\date{\today}

\begin{document}

\maketitle

\begin{abstract}
    This paper studies a multi-player non-zero-sum optimal stopping game driven by a spider Brownian bridge. We begin by analyzing the two-player setting, in which the underlying process is a skew Brownian bridge, and establish sufficient conditions for the existence of a Nash equilibrium. We derive sufficient conditions for the uniqueness of the Nash equilibrium in the symmetric case, (where the skew Brownian bridge reduces to a standard Brownian bridge and both players have the same payoff structures). We also give examples which show that there can be multiple Nash equilibria. We then extend the framework to the multi-player setting and provide sufficient conditions for the existence of a Nash equilibrium in this more general setting with a spider Brownian bridge.
    
\end{abstract}
\section{Introduction}

The goal of this paper is to study a non-zero sum optimal stopping game for two or more players, where the payoffs are functions of an underlying stochastic process. In order to make the problem as tractable as possible we choose a specific process (a skew Brownian bridge, or in the multi-player case a spider Brownian bridge) and specific payoff functions (chosen to exploit the scalings of the bridge). Then we exploit this tractability to determine existence and uniqueness of Nash equilibria (at least amongst a class of threshold strategies for each player) in both the symmetric case where the skew Brownian bridge is a Brownian bridge and all players face the same payoff structures, and in the asymmetric case where the process is asymmetric or the agents face different payoffs.

At the heart of our study is an optimal stopping problem for a Brownian bridge. This problem was studied by, amongst others, Shepp \cite{Shepp1969} and Ekstr\"{o}m and Wanntorp \cite{EkstromWanntorp2009}. Ekstr\"{o}m and Wanntorp considered the problem of finding $\sup_{\tau \in \sT} \EE[X_\tau]$ where $\sT$ is the set of stopping times taking values in $[0,1]$ and $X = (X_t)_{t \geq 0}$ is a Brownian bridge and showed that the optimal stopping rule $\tau_*$ is of square-root threshold form: $\tau_* = \inf \{ u : X_u  \geq c^* \sqrt{1-u} \}$ where $c^*$ is the root of a certain equation. The problem of optimally stopping a Brownian bridge has been generalised in many directions. For example, Hobson and Liu \cite{HobsonLiu2026} studied the optimal stopping problem in which the underlying process is a Brownian excursion or a general $\alpha$-dimensional Bessel bridge, Görgens \cite{görgens2014optimalstoppingalphabrownianbridge} extended the problem to an $\alpha-$Brownian bridge, and De Angelis and Milazzo \cite{de2020optimal} studied the problem for the exponential of the Brownian bridge. Moreover, Baurdoux et al. \cite{Baurdoux_Chen_Surya_Yamazaki_2015} extended the
original optimal stopping problem to an optimal double stopping problem driven by a Brownian bridge; their goal is to find a pair of stopping times that maximizes the expected spread between the two payoffs.
Our aim is to extend the optimal stopping problem to an optimal stopping game.

Our base problem is a two player Dynkin game based on a skew Brownain bridge. Intuitively speaking, away from zero a skew Brownian bridge behaves like a standard Brownian bridge, but at zero it gets a push up or down with the effect that each excursion away from zero is positive with probability $p \in (0,1)$, and the signs of distinct excursions are independent. When $p=\frac{1}{2}$ we recover a Brownian bridge. We consider a game with linear payoffs with a penalty for stopping the game---motivated by the analysis in \cite{EkstromWanntorp2009} this is chosen to be proportional to the square root of time to go. We suppose that the players use square-root threshold strategies and look for a Nash equilibrium in this class. In the symmetric case, where $p=\frac{1}{2}$ and the payoff functions are identical for the two players, then depending on the parameters of the payoffs, the game may admit a unique Nash equilibrium, multiple Nash equilibria or no Nash equilibria. In the asymmetric case, we prove the existence of a Nash equilibrium under certain conditions on the parameters by applying Miranda's theorem \cite{Miranda1940}.

Then we turn to a multi-player optimal stopping game based on a spider Brownian bridge. 
There is extensive literature on two-player non-zero-sum games, but relatively little work has been done on multi-player optimal stopping games, and our goal is to provide a tractable first example. The state space of a spider Brownian bridge consists of $N >2$ copies of $(0,\infty)$ joined at a central vertex which we label as $0$. A spider or Walsh Brownian motion (for details see Walsh \cite{walsh_diffusion_1978}, Barlow et al. \cite{barlow2006walsh}) 
behaves like Brownian whilst on a given ray until it hits $0$, at which point it chooses a new ray along which it again acts as a Brownian motion. Each time it hits the vertex it chooses Ray $i$ for $i \in \{1, \ldots , N \}$, with probability $p_i>0$ where $(p_i)_{ 1  \leq i \leq N }$ is a probability vector, i.e. $\sum_{i=1}^N p_i = 1$. The choices of ray for different excursions away from $0$ are independent. A spider Brownian bridge is a spider Brownian motion conditioned to be at $0$ at time 1.

We consider a $N$-player optimal stopping game based on a spider Brownian bridge. Each player is identified with a ray, and players may only stop when the process is in their ray (or at $0$). Payoffs are linear, except that there is a penalty for being the player who stops the game and motivated by the analysis of the skew-Brownian motion we take this penalty to be proportional to the square root of the time to go. From the scalings of the problem (and the intuition developed in the skew Brownian bridge case) we expect each player to follow a square-root threshold stopping rule. Our goal is to find Nash equilibria within this class of strategies. Our main result is a set of sufficient conditions for the existence of Nash equilibria. 

Zero-sum optimal stopping games, often referred to as Dynkin games, were first introduced by Dynkin \cite{Dynkin1969_OptimalStoppingGame} as a natural extension of optimal stopping problems. Neveu extended the game to a discrete-time setup \cite{Neveu1975}, and Bismut studied the game in a continuous-time setting \cite{Bismut1977Dynkin}. One of the main focuses of studying Dynkin games is establishing the existence of an equilibrium.  Ekstr\"{o}m and Peskir \cite{EkstromPeskir2008} studied an optimal stopping game for a Markov process, and find sufficient conditions for the existence of a Stackelberg equilibrium and a Nash equilibrium. Alvarez \cite{Alvarez2008SolvableStoppingGames} considered a class of Dynkin games driven by a one-dimensional diffusion process, and provided general conditions under which the saddle point and equilibria exist, as well as sufficient conditions under which the equilibrium strategy takes the form of threshold stopping times. 
The study of non-zero-sum games began with Bensoussan and Friedman~\cite{BensoussanFriedman1977} who worked with diffusion processes and proved that a Nash equilibrium exists if there is a solution to a system of quasi-variational inequalities.  
Morimoto \cite{Morimoto1986} studied the same problem in a discrete setting, and found sufficient conditions for the existence of Nash equilibrium using the martingale method and a fixed point theorem for monotone mappings. Since then, the research on non-zero-sum optimal stopping games has concentrated on the existence and characterization of Nash equilibrium. Peskir \cite{Peskir2009} studied a non-zero-sum stopping game driven by a right-continuous strong Markov process and established that a Nash equilibrium exists if and only if the value function admits a semi-harmonic characterization, meaning that the value function coincides with the smallest superharmonic function and the largest subharmonic function lying between the gain and loss function.
Attard \cite{Attard2015} worked on a right-continuous, quasi-left-continuous strong Markov process, and constructed a Nash equilibrium by deriving a partial superharmonic characterization for the value function for each player.
The same author later also worked on non-zero-sum optimal stopping games for Brownian motion \cite{Attard2017}, in which the value functions were derived by transforming the game into a free-boundary problem. A more recent work by De Angelis et al. \cite{DeAngelisFerrariMoriarty2018} established the existence (and uniqueness under extra conditions) of a Nash equilibrium in a non-zero-sum game driven by a one-dimensional diffusion. They further studied the structure of equilibria, and provided sufficient conditions for when the Nash equilibrium takes a simple threshold form, in which case the threshold can be determined by solving a system of algebraic equations.

The structure of the paper is as follows. In Section 2, we begin with a formal statement of the two-player game based on a skew Brownian bridge. Then, fixing Player 2's strategy, we reduce the optimal stopping game to an optimal stopping problem for Player 1, which we solve to give an explicit value function for Player 1. This result can be used to find Nash equilibria for the game, and we do so first in asymmetric case where we focus on existence and second in the symmetric case where we look for symmetric equilibria, and also consider uniqueness. In the symmetric case, depending on the parameters of the problem there may be no symmetric equilibrium, a unique equlibrium or multiple equilibria, but never more than three.  
In Section 3, we study the $N-$player game formulation, and present sufficient conditions for the existence of Nash equilibria. 

\section{Two-player Game}
\subsection{Game Formulation}

We work on a complete filtered probability space $(\Omega, \mathcal{F},\mathbb{P},\mathbb{F}=(F_{t})_{t\geq0})$ which satisfies the usual conditions. 
On this space
we have 
a skew Brownian bridge $X=\{X_{t}\}_{0 \leq t\leq 1}$.

$X$ may be constructed as follows: Suppose $S$ solves \begin{equation}\label{SBM_SDE}
    dS_{t}=dB_{t}+(2p-1)d\Tilde{L}_{t}^{S,0},
\end{equation}
where $B=\{B_{t}\}_{t\geq 0}$ is a standard Brownian motion, and $\Tilde{L}_{t}^{S,0}$ denotes the symmetric local time of $S$ at 0. Then $S$ is a skew-Brownian motion (see Lejay~\cite{lejay_skew_brownian_2006} for various equivalent definitions and properties). Then, applying a Doob $h$-transform to condition the process to be at zero at time 1 we obtain: 
\begin{definition}
    A  skew Brownian bridge is a real-valued, continuous stochastic process $X=\{X_{t}\}_{0\leq t < 1}$ which satisfies the SDE
\begin{equation}\label{SBMbridge_SDE}
    dX_{t}=dB_{t}+(2p-1)d\Tilde{L}_{t}^{X,0}-\frac{X_{t}}{1-t}dt, \hspace{+1CM} 0\leq t < 1,
    \end{equation}
    where $(\Tilde{L}_{t}^{X,0})_{t \geq 0}$ is the symmetric local time of $X$ at zero,
    subject to the initial condition $X_{0}=0$.  
\end{definition}

We consider $X$ to be the underlying stochastic process in a non-zero-sum, optimal stopping (Dynkin) game.
There are two players, labeled Player 1 and Player 2 both of whom aim to maximize their expected payoffs. Each player chooses a stopping time: $\tau$ for Player 1, $\sigma$ for Player 2. If $\tau<\sigma$, meaning that Player 1 stops the game first, then Player 1 receives $\GG_{1}= \GG(\tau,X_\tau)$ and Player 2 receives $\LL_{2}(\tau,X_\tau)$.
Conversely, if $\sigma<\tau$, Player 2 receives $\GG_{2}(\sigma,X_\sigma)$, while Player 1 receives $\LL_{1}(\sigma,X_\sigma)$. 
If $\tau=\sigma$, meaning that both players stop the game at the same time, then Player 1 receives $\GG_1$ and Player 2 receives $\GG_2$, the same payoffs as if they had been the sole player who stopped the game. 
Here the functions
$\GG_{1},\GG_{2},\LL_{1},\LL_{2}: [0,1]\times \RR \to \RR$ are continuous functions with $\GG_i(1,0)=0=\LL_i(1,0)$, so that if neither player stops the game before time 1, then the payoffs to both players are zero. 
Putting this together, the expected payoffs $R_1$ to Player 1 and $R_2$ to Player 2 (if the two players use stopping times $\tau$ and $\sigma$ respectively) are
\begin{eqnarray*}
     R_{1}(\tau,\sigma) & = &\mathbb{E}\left[\GG_{1}(\tau, X_{\tau})\mathbb{I}_{\{\tau \leq \sigma\}}+ \LL_{1}(\sigma,X_{\sigma})\mathbb{I}_{\{\sigma<\tau\}} \right],  \\
     R_{2}(\tau,\sigma) & = &\mathbb{E}\left[\GG_{2}(\sigma,X_{\sigma})\mathbb{I}_{\{\sigma \leq \tau\}}+ \LL_{2}(\tau,X_{\tau})\mathbb{I}_{\{\tau<\sigma\}}\right].
\end{eqnarray*}
We assume that Player 1 is only allowed to stop the game when $X \geq 0$ and Player 2 is only allowed to stop the game when $X \leq 0$; this is equivalent to $\GG_1(t,x) = - \infty$ for $x<0$ and
$\GG_2(t,x) = -\infty$ for $x>0$.
Although it is not part of our specification, it is also natural to assume that $\GG_i \leq \LL_i$ so that each player faces a `war of attrition': conditional on the fact that the game is stopped each player would prefer that it is their opponent who did the stopping.  Then it can never be optimal for $\sigma=\tau<1$.

Let $\sT$ be the set of stopping times taking values in $[0,1]$ (and for later use, let $\sT_t$ be the set of stopping times taking values in $[t,1]$). We are interested in finding Nash equilibria for the game.

\begin{definition}\label{Nash_Equilibrium}
   We say that $(\tau^{*},\sigma^{*}) \in \sT \times \sT$ is a Nash equilibrium (for the game with expected payoffs $(\GG_i, \LL_i)_{i = 1,2}$) if 
    \begin{equation*}
        \begin{split}
            R_{1}(\tau^{*},\sigma^{*})=\sup_{\tau \in \sT} R_{1}(\tau,\sigma^{*}),  \\
            R_{2}(\tau^{*},\sigma^{*})=\sup_{\sigma \in \sT} R_{2}(\tau^{*},\sigma).
        \end{split}
    \end{equation*}
\end{definition}

Underpinning our Dynkin game is an optimal stopping problem for a Brownian bridge which was solved by Ekstr\"{o}m and Wanntorp \cite{EkstromWanntorp2009}. Consider the (single agent) problem of finding $\sup_{\tau \in \sT} \EE[Y_\tau]$ where $Y$ is a Brownian bridge. 
Ekstr\"{o}m and Wanntorp \cite{EkstromWanntorp2009} exploited scalings of the Brownian bridge to find the value function for the problem for an arbitrary starting point $(t,x)$ and showed that the optimal stopping rule $\tau^*$ is of square-root threshold form: $\tau^* = \inf \{ t \in [0,1] : Y_t \geq c^* \sqrt{1-t} \}$ where $c^* \in (0,\infty)$ is a constant which they determined. We want to consider similar scalings in our game. 

To this end, we suppose $\GG_i:[0,1] \times \RR \to \RR$ is of the form $\GG_i(t,x) = \sqrt{1-t} \sG_i( \frac{x}{\sqrt{1-t}} )$ and $\LL_i(t,x) = \sqrt{1-t} \sL_i( \frac{x}{\sqrt{1-t}} )$. Our main example is the case where (for $x>0$)
$\GG_1(t,x) = x - \alpha_1 \sqrt{1-t}$ and (for $x<0$) $\GG_2(t,x) = - x - \alpha_2 \sqrt{1-t}$ (and $\GG_1(t,x) = -\infty$ for $x<0$ and $\GG_2(x,0) = - \infty$ for $x>0$) and $\LL_1(t,x) = \theta_1x$ and $\LL_2(t,x) = - \theta_2x$. Here $\alpha_1, \alpha_2, \theta_1, \theta_2>0$. This corresponds to a proportional payoff to the player who stops subject to a penalty for stopping which is a multiple of the square-root of the time-to-go, and a proportional loss if the opponent stops. 
Then $\sG_1(z) = z - \alpha_1$, $\sG_2(z) = - z -\alpha_2$, $\sL_1(z) = \theta_1 z$ and $\sL_2(z)= - \theta_2 z$.
Further, if $\theta_1=1=\theta_2$, then the game above describes a zero-sum game between two players with a payoff equal to the current value of the skew Brownian motion, but with an additional penalty paid by the player who stops.

The strategy for each player is a stopping rule. Within the class of stopping times is the class $\sT^{\TT}$ of threshold strategies and within the class of threshold strategies is the class $\sT^{\SQ}$ of threshold strategies of square-root form. A stopping time $\tau$ (respectively, $\sigma$) is a threshold strategy if $\tau = \tau^B= \inf\{ t : X_t \geq B(t) \}$ (respectively, $\sigma = \inf\{ t : X_t \leq  -D(t) \}$) where $B,D:[0,1] \to [0,\infty]$  
are measurable functions (here we use the fact that Player 1 is only able to stop when $X_t \geq 0$ and Player 2 is only able to stop when $X_t \leq 0$). A threshold strategy $\tau$ (respectively, $\sigma$) is a threshold strategy of square root form if $\tau = \tau^b = \inf\{ t : X_t \geq b \sqrt{1-t}  \}$ (respectively $\sigma = \sigma^d = \inf\{ t : X_t \leq - d \sqrt{1-t}  \}$) where $b,d \geq 0$.

A standard approach in control problems such as optimal stopping problems and optimal stopping games is to consider the problem started from a general intermediate point, in our case $X_t = x$, for $t \in [0,1)$ and $x \in \RR$. We let $\PP_{t,x}$ (and associated expectation operator $\EE_{t,x})$ denote the conditional probabilities based on $X_t=x$. Further, 
let $\tau^B \in \sT^\TT$ be a threshold stopping time of the form $\tau^B = \inf \{ t \in [0,1] : X_t \geq B(t) \}$. Then for $t \in [0,1)$ define $\tau^B_t = \inf \{ s \geq t : X_s \geq B(s) \}$. Similarly, given a threshold strategy $\sigma^D = \inf \{ t \in [0,1] : X_t \leq -D(t) \}$ for Player 2, define $\sigma^D_t = \inf \{ s \geq t : X_s \leq - D(s) \}$. 

Then we can write
\begin{eqnarray*} 
R_1(x,t;\tau^B,\sigma^D) & = & \EE_{t,x} \left[ \GG_1(\tau^B_t,X_{\tau^B_t}) 
\mathbb{I}_{\{\tau^B_t \leq \sigma^D_t \}}+ \LL_{1}(\sigma^D_t,X_{\sigma^D_t})\mathbb{I}_{\{\sigma^D_t<\tau^B_t\}}
\right] , \\
R_2(x,t;\tau^B,\sigma^D) & = & \EE_{t,x} \left[ \GG_2(\tau^B_t,X_{\tau^B_t}) 
\mathbb{I}_{\{\sigma^D_t \leq \tau^B_t \}}+ \LL_{2}(\sigma^D_t,X_{\sigma^D_t})\mathbb{I}_{\{\tau^D_t < \sigma^D_t \}}
\right]  .
\end{eqnarray*}

\begin{definition}\label{spNash_Equilibrium}
   We say that $(\tau^{*},\sigma^{*}) \in \sT^\TT \times \sT^\TT$ is a subgame perfect Nash equilibrium of threshold-type if for all $t\in[0,1)$ and $x=X_{t}\in\mathbb{R}$, the stopping times $\tau^{*}_t$ and $\sigma^{*}_{t}$ satisfy  
    \begin{equation*}
        \begin{split}
            R_{1}(t,x;\tau_{t}^{*},\sigma_{t}^{*})=\sup_{\rho \in \sT_t} R_{1}(t,x;\rho,\sigma^{*}_{t}),  \\
            R_{2}(t,x;\tau^{*}_{t},\sigma^{*}_{t})=\sup_{\rho \in \sT_t} R_{2}(t,x;\tau^{*}_{t},\rho).
        \end{split}
    \end{equation*}
\end{definition}

\subsection{The Optimal Stopping Problem for Player 1}

In this section, we fix Player 2's strategy to be $\sigma = \sigma^d$ for $d \in [0,\infty)$ where $\sigma^d :=\inf\{s\geq 0: X_{s}\leq - d \sqrt{1-s} \}$. We find the value function $\overline{V}_{1,\sigma^d}$ and the optimal stopping rule for Player 1 as a function of a general starting point. In particular, $\overline{V}_{1,\sigma^d} : [0,1] \times \RR \to \RR$ is given by
\begin{equation}\label{Valuefunc_P1}
   \overline{V}_{1,\sigma^d}(t,x) = \sup_{\tau \in \sT_t}  
 \EE_{t,x} \left[ \GG_1(\tau,X_{\tau}) 
\mathbb{I}_{\{\tau \leq \sigma^d_t \}}+ \LL_{1}(\sigma^d_t,X_{\sigma^d_t})\mathbb{I}_{\{\sigma^d_t<\tau\}}
\right] . 
\end{equation}
Since we are only interested in Player 1's problem and since $d$ and $\sigma^d$ are fixed, in this section we omit the subscripts and write $\overline{V}$ as shorthand for $\overline{V}_{1,\sigma^d}$.

We begin by developing a candidate solution which we denote by $$V^{\text{cand}} =V= (V(t,x))_{(t,x) \in [0,1] \times \RR}.$$ Later we show that $V= \overline{V}$.

Since $V$ is a candidate solution 
we may assume that $V$ is appropriately smooth 
and write $\dot{}$ for a time derivative and ${}'$ for a space derivative. For now suppose that Player 1 follows a threshold strategy with threshold $B \geq 0$, so that $\tau = \tau^B = \inf \{ u : X_u \geq B(u) \}$. Later we will argue that it is optimal for Player 1 to use a threshold strategy of square-root form, and we will prove the optimality over threshold strategies extends to optimality over all stopping times.

Let $D(t)= d \sqrt{1-t}$. Then
the region $\{(t,x):[0,1)\times(-D(t),B(t))\}$ is the continuation region, and $\{(t,x):[0,1)\times[B(t),\infty)\}$ is the stopping region. If $X_t \leq - D(t)$ then the game is stopped by Player 2 and this stopping is exogenous from the perspective of Player 1. In the stopping region for Player 1 we expect that $V(t,x)= \GG_1(t,x)$ and in the region where Player 2 stops we have $V(t,x)= \LL_1(t,x)$. We seek to find $V$ in the continuation region, and to find the optimal threshold, i.e. the optimal function $B:[0,1) \to \RR$.

Suppose that the game has not been stopped by time $t_0$ and that $X_{t_0}= x_0 \in (-D(t_0), B(t_0))$. We take $(t_0,X_{t_0}=x_0)$ to be our initial condition. Then, 
using the It\^o-Tanaka formula with symmetric local times representation (see Revuz and Yor~\cite[Chapter~VI, p.~234]{revuz_yor_1999}), we have 
\begin{equation*}
   V(t, X_{t})=V(t_0,x_0)+\int_{t_0}^{t}\Dot{V}(s,X_{s})ds + \int_{t_0}^{t}V'(s, X_{s})dX_{s}+\frac{1}{2}\int \Tilde{L}^{X,a,t_0}_{t}V''(da),
\end{equation*}
where $\tilde{L}^a_t=\tilde{L}^{X,a,t_0}_{t}$ is the symmetric local time of $X$ at $a$ between times $t_0$ and $t$. Applying this result for $X$ a skew Brownian motion and using the Occupation Times formula (see Revuz and Yor~\cite[Chapter~VI, p.~224]{revuz_yor_1999}), along with $V'(t,0)=\frac{1}{2}\left[V'(t,0+)+V'(t,0-)\right]$ and $\Delta V'(t,0)= V'(t,0+)-V'(t,0-)$ we have

\[  dV(t, X_{t})= 
\Dot{V}(t,X_{t}) + V'(t, X_{t}) dX_{t}+\frac{1}{2} V''(t, X_{t})d[X]_{t}
+\frac{1}{2}\Delta V'(t, 0) d\Tilde{L}^0_{t}.\]

Substituting \eqref{SBMbridge_SDE} into the above equation, and using that $d\Tilde{L}_{t}^{0}$ is only supported on the set $\{s\in [t_0,t]: X_{s}=0\}$ we find
\begin{equation*}
  \begin{split}
dV(t,X_{t})=&\Dot{V}(t,X_{t})dt+V'(t, X_{t})\left[dB_{t}+(2p-1)d\Tilde{L}_{t}^{0}-\frac{X_{t}}{1-t}dt\right]\\
      &\hspace{+3cm}+\frac{1}{2}V''(t, X_{t})dt+\frac{1}{2}\left[V'(t,0+)-V'(t,0-)\right]d\Tilde{L}_{t}^{0}\\
      =&\left\{\Dot{V}(t,X_{t})-\frac{X_{t}}{1-t}V'(t, X_{t})+\frac{1}{2}V''(t, X_{t})\right\}dt+V'(t, X_{t})dB_{t}\\
      &\hspace{+3CM}+\left\{V'(t, 0)(2p-1)+\frac{1}{2}\left[ V'(t,0+)-V'(t,0-)\right]\right\}d\Tilde{L}_{t}^{0}\\
      =&\left\{\Dot{V}(t,X_{t})-\frac{X_{t}}{1-t}V'(t, X_{t})+\frac{1}{2}V''(t, X_{t})\right\}dt+V'(t, X_{t})dB_{t}\\
      &\hspace{+3CM}+\left\{p V'(t, 0+) - (1-p) V'(t,0-) \right\} d\Tilde{L}_{t}^{0}
  \end{split}
\end{equation*}
We expect $V=V(t, X_{t})_{t \geq 0}$ to evolve as a martingale in the continuation region. It follows that the coefficients premultiplying the $dt$ term and $d\Tilde{L}_{t}^{0}$ term should be 0. Thus, we expect the conditions
\begin{align}
   \Dot{V}(t,x)-\frac{x}{1-t}V'(t,x)+\frac{1}{2}V''(t,x) &= 0, \hspace{+1cm} x \in (-D(t),0) \cup (0, B(t));  \label{Valuefunc_PDE}\\
   pV'(t,0+)&=(1-p)V'(t,0-).\label{ConditionLocalTime_original}
\end{align}
Moreover, value-matching conditions at the boundaries give us, 
\begin{align}
     V(t,x)&= \mathbb{G}_{1}(t,x), \hspace{+0.5cm} x= B(t), \label{ValueMatching_original_1}\\
   V(t,x)&= \mathbb{L}_{1}(t,x), \hspace{+0.5cm} x=-D(t). \label{ValueMatching_original_2}
\end{align}
Recalling that $\sigma^d = \inf \{ u : X_u \leq -d \sqrt{1-u} \}$ and motivated by the intuition in \cite{EkstromWanntorp2009}, we make the ansatz that the optimal stopping rule for Player 1 is a threshold strategy of square-root form $\tau = \tau^b = \inf \{ u : X_u \geq b \sqrt{1-u} \}$, and that when $x>0$, 
\begin{equation*}
    V(t,x)=\sqrt{1-t}f_{+}\left(\frac{x}{\sqrt{1-t}}\right),
\end{equation*}
and when $x<0$, $V(t,x)=\sqrt{1-t}f_{-}\left(\frac{x}{\sqrt{1-t}}\right)$ where $f_+:\mathbb{R}_{+}\to \RR$ and $f_-:\mathbb{R}_{-}\to \RR$ are functions to be determined.   
In this way, setting $z=\frac{x}{\sqrt{1-t}}$, \eqref{Valuefunc_PDE} and \eqref{ConditionLocalTime_original} are transformed into 
\begin{align}
   f''_{+}(z)-zf'_{+}(z)-f_{+}(z)&=0, \hspace{+1cm} 0<z<b,\label{ValueFunc_ODE_positive}\\
   f''_{-}(z)-zf'_{-}(z)-f_{-}(z)&=0, \hspace{+1cm} -d<z<0,\label{ValueFunc_ODE_negative}\\
   pf'_{+}(0+)&=(1-p)f'_{-}(0-). \label{ValueFunc_ODE_thirdcondition}
\end{align}
If $d=0$ then \eqref{ValueFunc_ODE_negative} is vacuous and \eqref{ValueFunc_ODE_thirdcondition}
is replaced by $f_+(0)= \sL_1(0)$.
Recalling the relationships between $\GG_1$ and $\sG_1$, and $\LL_1$ and $\sL_1$,
we also have that when $z\geq b$, $f_+(z) = \sG_1(z)$ and when $z\leq -d$, $f_-(z) = \sL_1(z)$.

We want to construct the value function for Player 1 under stopping rule $\tau^b$ and later to maximise over $b$ to construct a candidate optimal stopping rule.

The equation $f''(z)-zf'(z)-f(z)=0$ has fundamental solutions
\begin{equation}\label{Fundamental_solutions}
    \phi(z)=e^{\frac{z^{2}}{2}} \ \text{ and } \ \psi(z)= e^{\frac{z^{2}}{2}}\int_{0}^{z}e^{-\frac{u^{2}}{2}}du,
\end{equation}
Here $\phi$ and $\psi$ are chosen such that
\begin{equation}
\label{eqn:phipsiproperties}
    \begin{split}
        &\phi(0)=1, \ \phi'(0)=0,  \\
        &\psi(0)=0, \ \psi'(0)=1 .
    \end{split}
\end{equation}
We find that 
\begin{equation}
\label{eqn:phipsiderivaties}
\ \phi'(z)=z\phi(z), \hspace{20mm} \psi'(z)=z\psi(z)+1. 
\end{equation}

Then, \eqref{ValueFunc_ODE_positive}---\eqref{ValueFunc_ODE_negative} have solutions
\begin{equation}\label{Expression_f_2players}
    \begin{cases}
      f_{+}(z)  =  A_{+}(b,d) \phi(z)+B_{+}(b,d)\psi(z), & \hspace{+1cm} 0<z<b\\
      f_{-}(z) = A_{-}(b,d) \phi(z)+B_{-}(b,d)\psi(z), & \hspace{+1cm} -d<z<0,
    \end{cases}
\end{equation}
for constants $A_{\pm}(b,d)$ and $B_{\pm}(b,d)$
to be determined by the fact that $f_+(0+) = f_-(0-)$, the value-matching conditions at the boundaries and \eqref{ValueFunc_ODE_thirdcondition}.

Since we have $\lim_{z\downarrow 0}f_{+}(z)=\lim_{z\uparrow 0}f_{-}(z)$, we must have $A_{+}(b,d)=A_{-}(b,d)=:A(b,d)$: then \eqref{ValueFunc_ODE_thirdcondition} with \eqref{eqn:phipsiproperties} yields 
\begin{equation}
    \label{eq:Bderivatives}
    pB_{+}(b,d)-(1-p)B_{-}(b,d)=0.
\end{equation} 
Moreover, after the transformation, the value matching conditions are satisfied at $b$ and $-d$, and thus \eqref{ValueMatching_original_1} and \eqref{ValueMatching_original_2} are equivalent to 
\begin{align}
    A(b,d) \phi(b)+B_{+}(b,d)\psi(b) &=\mathcal{G}_{1}(b) \label{ValueMatching_1},\\
    A(b,d) \phi(-d)+B_{-}(b,d)\psi(-d) &=\mathcal{L}_{1}(-d)\label{ValueMatching_2}.
\end{align}

First, consider the case where Player 2 chooses the boundary $d>0$. Using \eqref{eq:Bderivatives} to eliminate $B_+$ and $B_-$ from 
\eqref{ValueMatching_1} and \eqref{ValueMatching_2} we
obtain,
\begin{eqnarray}
\label{Expression_coefficients_2playersA}
       A (b,d) &= & \frac{(1-p)\psi(b)\mathcal{L}_{1}(-d)+p\psi(d)\mathcal{G}_{1}(b)}{(1-p)\psi(b)\phi(d)+p\phi(b)\psi(d)} ,\\
\label{Expression_coefficients_2playersB+}     
B_{+}(b,d)& = &(1-p)\left[\frac{\mathcal{G}_{1}(b)\phi(d)-\phi(b)\mathcal{L}_{1}(-d)}{(1-p)\psi(b)\phi(d)+p\phi(b)\psi(d)}\right],\\
\label{Expression_coefficients_2playersB-}
    B_{-}(b,d) &=&p\left[\frac{\mathcal{G}_{1}(b)\phi(d)-\phi(b)\mathcal{L}_{1}(-d)}{(1-p)\psi(b)\phi(d)+p\phi(b)\psi(d)}\right],
\end{eqnarray}
where we use that $\phi(-d)=\phi(d)$ and $\psi(-d)=-\psi(d)$.
Moreover, from \eqref{ValueMatching_2} we have that $B_-(b,d)=-\frac{\mathcal{L}_{1}(-d)}{\psi(d)}+\frac{\phi(d)}{\psi(d)}A(b,d)$, and then we have 
\begin{equation}
\label{eq:B+def}
    B_+(b,d)=\frac{1-p}{p}\left[-\frac{\mathcal{L}_{1}(-d)}{\psi(d)}+\frac{\phi(d)}{\psi(d)}A(b,d)\right].
\end{equation}

Now consider the case where Player 2 chooses the boundary $d=0$. \eqref{ValueMatching_1} and \eqref{ValueMatching_2} become
\begin{align}
   A(b,0) \phi(b)+B_{+}(b,0)\psi(b) &=\mathcal{G}_{1}(b), \label{ValueMatching_0_1}\\
    A(b,0)  &=\mathcal{L}_{1}(0)\label{ValueMatching_0_2}, 
\end{align}
and we obtain,
\begin{equation}\label{ExpressionAB_0}
A(b,0)  =\mathcal{L}_{1}(0), \hspace{+1cm}  B_{+}(b,0)=\frac{\mathcal{G}_{1}(b)-\mathcal{L}_{1}(0)\phi(b)}{\psi(b)}.  
\end{equation}
Note the expression for $B_+(b,0)$ given in \eqref{ExpressionAB_0} coincides with the general expression for $B_+(b,d)$ given in \eqref{Expression_coefficients_2playersB+} when $d=0$. Therefore, for the following definition, we combine the above two cases together. 

Since $d \in [0,\infty)$ is taken fixed in this section, we define 
$\Gamma_1 = \Gamma_{1,d}=\Gamma_{1,d, p, \alpha, \theta} : [0,\infty) \to \RR$ by
\begin{equation}\label{Gamma_expression_}
  \Gamma_{1}(b) =  B_+(b,d) = (1-p)\left[\frac{\mathcal{G}_{1}(b)\phi(d)-\phi(b)\mathcal{L}_{1}(-d)}{(1-p)\psi(b)\phi(d)+p\phi(b)\psi(d)}\right]
\end{equation}

For $d>0$ define $f = f^{b,d}: \mathbb{R} \to \RR$ by
\begin{equation}
\label{eq:fdef}
f^{b,d}(z) = \begin{cases}
    A(b,d)  \quad \quad \quad & z=0; \\
    f_+(z)   &  0 < z < b; \\
    f_-(z)   &  -d < z < 0; \\
    \sG_1(z)   &  z \geq b; \\
    \sL_1(z)   &  z \leq -d; \\
\end{cases} \end{equation}
For $d=0$, define $f = f^{b,0}: \mathbb{R} \to \RR$ by
\begin{equation}
\label{eq:fdef_0}
f^{b,0}(z) = \begin{cases}
    f_+(z)   &  0 < z < b; \\ 
    \sG_1(z)   &  z \geq b; \\
    \sL_1(z)   &  z \leq 0; \\
\end{cases} \end{equation}
Thus \eqref{eq:fdef_0} is a special case of \eqref{eq:fdef} in which the third line of \eqref{eq:fdef} is redundant and the first and last lines agree at $z=0$. The next assumption is a condition on the payoffs $\mathcal{G}_1(\cdot)$ and $\mathcal{L}_1(\cdot)$.

\begin{assumption}
\label{ass:Gamma}
 For each $d \geq 0$,
$\Gamma_{1,d}$ is maximised at $b^*=b^*(d) = b^*(d,p,\alpha,\theta) \in [0,\infty)$ and $\Gamma_{1,d}$ is non-increasing on $[b^*,\infty)$.
\end{assumption}

\begin{prop}
\label{prop:maximiser}
Fix $d\in[0,\infty)$. Suppose that $\Gamma_{1,d}$ satisfies Assumption~\ref{ass:Gamma}.

    Then $f^{b,d}$ (resp. $f^{b,0}$) given in \eqref{eq:fdef} (resp.\eqref{eq:fdef_0}), attains its maximum, uniformly in $z \in \mathbb{R}$, at $b^*=b^*(d)$ where
    $b^{*}$ is the maximizer of $\Gamma_{1,d}$.     
\end{prop}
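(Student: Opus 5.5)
The idea is to express $f^{b,d}$ on each region entirely through $\Gamma_{1,d}(b)=B_+(b,d)$, so that the dependence on $b$ enters only through $\Gamma_{1,d}$. Then the pointwise comparison $f^{b,d}(z)\le f^{b^*,d}(z)$ follows from $\Gamma_{1,d}(b)\le\Gamma_{1,d}(b^*)$, together with Assumption~\ref{ass:Gamma} in the region where the stopping payoff $\sG_1$ is in force. Throughout, $d$ is fixed and I treat $d>0$ and $d=0$ in parallel; for $d=0$, \eqref{ExpressionAB_0} shows that $A(b,0)=\sL_1(0)$ does not depend on $b$.

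\emph{Step 1: the continuation region.} For $d>0$, \eqref{eq:Bderivatives} and \eqref{eq:B+def} give
\[ B_-(b,d)=\tfrac{p}{1-p}\Gamma_{1,d}(b), \qquad A(b,d)=\frac{\psi(d)}{\phi(d)}B_-(b,d)+\frac{\sL_1(-d)}{\phi(d)}. \]
Hence
\[ f_+(z)=\frac{\sL_1(-d)}{\phi(d)}\phi(z)+\Gamma_{1,d}(b)\Big[\tfrac{p}{1-p}\tfrac{\psi(d)}{\phi(d)}\phi(z)+\psi(z)\Big] \]
for $0<z<b$, and
\[ f_-(z)=\frac{\sL_1(-d)}{\phi(d)}\phi(z)+\tfrac{p}{1-p}\Gamma_{1,d}(b)\Big[\tfrac{\psi(d)}{\phi(d)}\phi(z)+\psi(z)\Big] \]
for $-d<z<0$. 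Call these $h_+(z;\Gamma)$ and $h_-(z;\Gamma)$. They are affine in $\Gamma$, with coefficients that are nonnegative on the relevant ranges: $\phi,\psi>0$ on $(0,\infty)$, and for $z\in(-d,0)$ we have $\psi(d)\phi(z)+\psi(z)\phi(d)=\phi(z)\phi(d)\int_{z}^{d}e^{-u^2/2}\,du\cdot(\ldots)\ge 0$. More precisely, $\psi(d)/\phi(d)+\psi(z)/\phi(z)=\int_0^d e^{-u^2/2}du-\int_0^{-z}e^{-u^2/2}du>0$ since $-z<d$. Thus $h_\pm$ are nondecreasing in $\Gamma$. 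The value $A(b,d)$ at $z=0$ is also nondecreasing in $\Gamma$. For $d=0$ the same holds with $f_+(z)=\sL_1(0)\phi(z)+\Gamma_{1,0}(b)\psi(z)$.

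\emph{Step 2: the pointwise comparison.} Fix $z$ and $b$.
\begin{itemize}
\item If $z\le -d$, both $f^{b,d}(z)$ and $f^{b^*,d}(z)$ equal $\sL_1(z)$.
\item If $-d<z<\min(b,b^*)$ (this includes $z=0$), both are given by $h_\pm$, and monotonicity in $\Gamma$ gives the inequality.
\item If $z\ge b$, then $f^{b,d}(z)=\sG_1(z)$. Value matching \eqref{ValueMatching_1} gives $\sG_1(z)=h_+(z;\Gamma_{1,d}(z))$, that is, stopping exactly at $z$. So for $z\ge b$ it suffices to show $h_+(z;\Gamma_{1,d}(z))\le f^{b^*,d}(z)$. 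If $z<b^*$, this follows from $\Gamma_{1,d}(z)\le\Gamma_{1,d}(b^*)$ and monotonicity. If $z\ge b^*$, both sides equal $\sG_1(z)$.
\item If $b^*\le z<b$, then $f^{b^*,d}(z)=\sG_1(z)=h_+(z;\Gamma_{1,d}(z))$, while $f^{b,d}(z)=h_+(z;\Gamma_{1,d}(b))$. Here Assumption~\ref{ass:Gamma} (that $\Gamma_{1,d}$ is non-increasing on $[b^*,\infty)$) gives $\Gamma_{1,d}(b)\le\Gamma_{1,d}(z)$, and monotonicity finishes the case.
\end{itemize}

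The main care is needed in the identity $\sG_1(z)=h_+(z;\Gamma_{1,d}(z))$, that is, that \eqref{ValueMatching_1} holds with $b$ replaced by $z$ and $A,B_+$ written via $\Gamma_{1,d}(z)$. This follows directly from how $\Gamma_{1,d}$ and $A$ were derived. It also needs a check at $z=0$ when $b=0$, using $\psi(0)=0$. The other point to verify is the positivity of the coefficient multiplying $\Gamma$ on $(-d,0)$, handled above via the integral representation of $\psi/\phi$. The degenerate cases $b=0$ or $b^*=0$ fit into the same scheme, with empty intervals.
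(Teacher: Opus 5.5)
Your proposal is correct and follows essentially the same route as the paper. Both arguments write $f^{b,d}$ on $(-d,0)$, at $0$, and on $(0,b)$ as an affine function of $\Gamma_{1,d}(b)$ with a positive coefficient. Both then use value matching at $z=b$ to see that stopping at $z$ amounts to inserting $\Gamma_{1,d}(z)$ into the continuation formula, and invoke the non-increasing property of $\Gamma_{1,d}$ on $[b^*,\infty)$ to handle $b^*\le z<b$. The paper packages this as $\sup_b f^{b,d}(z)=\sup_{b\ge z}$ of the affine expression, while you do an explicit pairwise comparison, which is the same argument.

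One cosmetic slip: your first positivity formula should read $\psi(d)\phi(z)+\psi(z)\phi(d)=\phi(z)\phi(d)\int_{-z}^{d}e^{-u^2/2}\,du$, not $\int_{z}^{d}$. Your ``more precisely'' line already gives the correct computation.
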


\begin{proof}
    First consider the case $d>0$ where $d$ is fixed. Suppose $b^*$ is the maximiser of $\Gamma_{1}$. When $z\leq-d$ we have that $f^{b,d}(z)=\mathcal{L}_{1}(z)$ which does not depend on $b$ and hence $b^*$ maximizes $f^{b,d}(z)$ when $z\leq -d$.
    From the definition of $\Gamma_{1,d}$ given in \eqref{Gamma_expression_} and the relation between $B_+$ and $B_-$ given in \eqref{eq:Bderivatives}, we know immediately that if $b^*$ maximises $\Gamma_{1,d}$, it also maximises $B_+(b,d)$ and $B_-(b,d)$. Moreover, adding the value-matching condition given in \eqref{ValueMatching_2}, we have $A(b,d) = \frac{1}{\phi(d)}(\sL_1(-d) + \frac{p}{1-p} \psi(d) \Gamma_{1,d}(b))$, thus $b^*$ also maximises $A(b,d)$. Thus, $b^*$ maximizes $f^{b,d}(z)$ when $z=0$. 

    Now suppose $-d<z<0$. Using \eqref{ValueMatching_2} we have that
    $A(b,d) = \frac{\sL_1(-d)}{\phi(d)} + \frac{\psi(d)}{\phi(d)}B_-(b,d)$ and $B_-(b,d)=\frac{p}{1-p}\Gamma_{1,d}(b)$, so that for $-d<z<0$,
    \begin{equation*}
        f^{b,d}(z)=\frac{\sL_1(-d)}{\phi(d)} \phi(z)+\left(\psi(z)+\frac{\psi(d)}{\phi(d)}\phi(z)\right)\frac{p}{1-p}\Gamma_{1,d}(b).
    \end{equation*}
    Since $\frac{\psi(z)}{\phi(z)}$ is increasing in $z$ and $-d<z<0$, we have that $\frac{\psi(z)}{\phi(z)}>\frac{\psi(-d)}{\phi(-d)}=-\frac{\psi(d)}{\phi(d)}$, and thus the coefficient in front of $\Gamma_{1,d}(b)$ is positive and $f^{b,d}(z)$ is maximised by the maximiser $b^*=b^*(d)$ of $\Gamma_{1,d}$.

    Now suppose $z>0$. From \eqref{eq:fdef}, when $0<z<b$ 
    \begin{equation*}
      f^{b,d}(z)=
      \frac{\sL_1(-d)}{\phi(d)} \phi(z) + \left( \psi(z) +  \frac{\psi(d)}{\phi(d)}\frac{p}{1-p} \phi(z) \right) \Gamma_{1,d}(b).  
    \end{equation*}
     Note that $\frac{\psi(d)}{\phi(d)}\frac{p}{1-p}>0$. Moreover, after some algebra we see that at $z=b$ the right hand side of this expression yields $\sG_1(b)$. Conversely, if $b \leq z$ then $f^{b,d}(z) = \sG_1(z)$. It follows that 
    \begin{eqnarray*}
        \sup_{b: b \geq 0} f^{b,d}(z) & = & 
        \sup_{b: b \geq z} \left\{ \frac{\sL_1(-d)}{\phi(d)} \phi(z) + \left(  \psi(z) +  \frac{\psi(d)}{\phi(d)}\frac{p}{1-p} \phi(z) \right) \Gamma_{1,d}(b) \right\} \\
        & = &  \frac{\sL_1(-d)}{\phi(d)} \phi(z) + \left(  \psi(z) +  \frac{\psi(d)}{\phi(d)}\frac{p}{1-p} \phi(z) \right)\sup_{b: b \geq z} \Gamma_{1,d}(b).
    \end{eqnarray*}
    Then, if $z \leq b^*(d)$, $\arg \sup_{b \geq z} f^{b,d}_+(z) = b^*(d)$. Similarly, using the fact that $\Gamma_{1,d}$ is decreasing above $b^*(d)$, if $z > b^*(d)$ we have $z \in \arg \sup_{b \geq z} f^{b,d}_+(z) = z$. In the former case we have argued directly that the choice $b^*(d)$ is optimal. In the latter case, the choice $b=z$ is optimal and $f^{b,d}(z)= \sG_1(z)$. But the choice $b=b^*(d)$ also  
    yields $f^{b^*(d),d}(z) = \sG_1(z)$; hence $b^*(d)$
    is again optimal.

    Now consider the case $d=0$. Suppose $b^{*}=b^*(0)$ is the maximiser of $\Gamma_{1,d}$. From \eqref{eq:fdef_0}, we know when $z\leq 0$, $f^{b,0}(z)=\mathcal{L}_{1}(z)$ which does not depend on $b$, thus $b^{*}$ maximises $f^{b,0}(z)$ when $z\leq0$.  
    To show $b^{*}$ maximises $f^{b,0}(z)$ when $z>0$, we can apply the same argument as in the case $d>0$. 

\end{proof}

\begin{corollary}\label{Cor:valuebddpayoffs} Fix $d \geq 0$.
 For any $z\in [0,\infty)$ we have $f^{b^*(d),d}(z)\geq \mathcal{G}_{1}(z)$. 
\end{corollary}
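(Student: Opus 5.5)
The corollary should follow directly from Proposition~\ref{prop:maximiser}. The idea is to compare $f^{b^*(d),d}(z)$ with $f^{b,d}(z)$ for a well-chosen competitor threshold $b$, namely $b=z$. Fix $d\ge 0$ and $z\in[0,\infty)$. By Proposition~\ref{prop:maximiser}, $b^*(d)$ maximises $b\mapsto f^{b,d}(z)$ for every fixed $z$, so for any $b\ge 0$ we have $f^{b^*(d),d}(z)\ge f^{b,d}(z)$.

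For $z>0$, we take $b=z$. Since $z\ge b$, the fourth line of \eqref{eq:fdef} (or the second line of \eqref{eq:fdef_0} when $d=0$) gives $f^{z,d}(z)=\sG_1(z)$. Combining this with the maximality of $b^*(d)$ yields $f^{b^*(d),d}(z)\ge \sG_1(z)$.

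The only delicate point is the endpoint $z=0$, where the definitions \eqref{eq:fdef} and \eqref{eq:fdef_0} assign the value $A(0,d)$ (respectively $\sL_1(0)$) rather than reading off the region $z\ge b$ directly.

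For $d>0$, I would take $b=0$. The value-matching condition \eqref{ValueMatching_1} at $b=0$, together with $\phi(0)=1$ and $\psi(0)=0$, gives $A(0,d)=\sG_1(0)$. Hence $f^{0,d}(0)=\sG_1(0)$, and Proposition~\ref{prop:maximiser} again gives $f^{b^*(d),d}(0)\ge\sG_1(0)$. Equivalently, one can check this from \eqref{Expression_coefficients_2playersA} by setting $b=0$.

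For $d=0$, we have $f^{b,0}(0)=\sL_1(0)$ for all $b$, so the claim at $z=0$ reduces to $\sL_1(0)\ge\sG_1(0)$. This holds under the natural standing assumption $\GG_1\le\LL_1$, and in the main example it reads $0\ge-\alpha_1$. Alternatively, one can argue by continuity: since $f^{b^*(0),0}(z)\ge\sG_1(z)$ for all $z>0$, letting $z\downarrow0$ gives $f_+(0+)=A(b^*,0)=\sL_1(0)\ge\sG_1(0)$, using continuity of $f_+$ at $0$ and of $\sG_1$.

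I expect the main obstacle to be only this bookkeeping at $z=0$. I would handle it by the limiting argument from $z>0$, which covers both the case $d>0$ and the case $d=0$ uniformly.
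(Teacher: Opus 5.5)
Your argument is correct and is essentially the paper's proof: both compare $b^*(d)$ with the competitor threshold $b=z$, at which value matching gives $\sG_1(z)$. The paper does the comparison coefficient-wise, via $\sG_1(z)=A(z,d)\phi(z)+B_+(z,d)\psi(z)$ and the maximality of $A(\cdot,d)$ and $B_+(\cdot,d)$ at $b^*(d)$. You instead invoke the conclusion of Proposition~\ref{prop:maximiser} directly.

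Your limiting treatment of $z=0$, $d=0$ is more careful than the paper's, whose identity at $b=z=0$ involves the undefined $B_+(0,0)$. It also avoids $\GG_1\le\LL_1$, which the paper explicitly does not impose, so rely on the limiting argument there. That argument needs $b^*(0)>0$, which you should state; it holds because the denominator of $\Gamma_{1,0}$ vanishes at $b=0$.
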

\begin{proof}
The inequality $f^{b^*(d),d}(z)\geq \mathcal{G}_{1}(z)$ is immediate (as an equality) for $z \geq b^*(d)$ so assume $0\leq z<b^* = b^*(d)$. 

Note that from \eqref{ValueMatching_1} and \eqref{ValueMatching_0_1} (for $b=z$), for any $d\geq 0$ we have that $\sG_1(z) = A(z,d) \phi(z) + B_+(z,d) \psi(z)$.
Then, for $0\leq z<b^*$,
\[ 
 f^{b^*(d),d}(z) - \mathcal{G}_{1}(z)  = (A(b^*(d),d) - A(z,d))\phi(z) + (B_+(b^*(d),d) - B_+(z,d)) \psi(z) 
 \geq  0     
\] 
since $b^*(d)$ maximises both $A(b,d)$ and $B_+(b,d)$ for fixed $d$.
\end{proof}

Fix $d>0$. Following \eqref{eq:fdef}, we define the candidate value function  
\begin{equation}\label{Expression_V_star}
    \begin{split}
    V_{*,d}(t,x)=
    \begin{cases}
        \sqrt{1-t}\left[A(b^*,d)\phi(\frac{x}{\sqrt{1-t}})+B_{+}(b^*,d)\psi(\frac{x}{\sqrt{1-t}})\right],
         & \ 0 \leq x< b^*\sqrt{1-t},\\
         \sqrt{1-t}\left[A(b^*,d)\phi(\frac{x}{\sqrt{1-t}})+B_{-}(b^*,d)\psi(\frac{x}{\sqrt{1-t}})\right],
         & \ -d\sqrt{1-t}<x<0,\\
        \mathbb{G}_{1}(t,x),  & \ x\geq b^*\sqrt{1-t}, \\
        \mathbb{L}_{1}(t,x),  & \  x\leq -d\sqrt{1-t},
    \end{cases}
    \end{split} 
\end{equation}
where $A(b^*,d),B_+(b^*,d)$ and $B_-(b^*,d)$ are given in \eqref{Expression_coefficients_2playersA} \eqref{Expression_coefficients_2playersB+} and \eqref{Expression_coefficients_2playersB-}, respectively and $b^* = b^*(d)$.
For $d=0$, define 
\begin{equation}\label{Expression_V_star_0}
    \begin{split}
    V_{*,0}(t,x)=
    \begin{cases}
        \sqrt{1-t}\left[A(b^*,0)\phi(\frac{x}{\sqrt{1-t}})+B_{+}(b^*,0)\psi(\frac{x}{\sqrt{1-t}})\right],
         & \ 0< x< b^*\sqrt{1-t},\\
        \mathbb{G}_{1}(t,x),  & \ x\geq b^*\sqrt{1-t}, \\
        \mathbb{L}_{1}(t,x),  & \  x\leq 0,
    \end{cases}
    \end{split} 
\end{equation}
where $A(b^*,0)$ and $B_{+}(b^*,0)$ are given in \eqref{ExpressionAB_0} and $b^*=b^*(0)$.

\begin{assumption}\label{supermartingale_assumption}
    $-\mathcal{G}_{1}(z)-z\mathcal{G}'_{1}(z)+\mathcal{G}''_{1}(z)\leq 0$ for any $z\in [b^*,\infty)$.
\end{assumption}

\begin{theorem}[Verification theorem]\label{Verification_theorem_2player}
    Suppose Player 2 uses the threshold strategy $\sigma^d :=\inf\{s\geq 0: X_{s}\leq - d \sqrt{1-s} \}$ where $d \in [0,\infty)$. Under Assumption \ref{ass:Gamma} and \ref{supermartingale_assumption}, the value function $\overline{V}_{1,\sigma^d}$ defined in \eqref{Valuefunc_P1} coincides with the value function $V_{*,d}(t,x)$ given by \eqref{Expression_V_star} (or $V_{*,0}(t,x)$ given by \eqref{Expression_V_star_0} if $d=0$). Moreover, the stopping time 
    \begin{equation*}
        \tau^{*}:=\inf\{s\geq 0: X_{s}\geq b^*\sqrt{1-t}\},
    \end{equation*}
    where $b^*=b^*(d)$ is the maximizer of $\Gamma_{1,d}$, is optimal. 
\end{theorem}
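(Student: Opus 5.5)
The plan is a standard verification argument in two halves: first show that $V_{*,d}$ is attained by $\tau^*$, then show that it dominates the payoff of any stopping time. Throughout write $V_*=V_{*,d}$ and $b^*=b^*(d)$.

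\emph{Attainment.} Start from $(t,x)$ with $-d\sqrt{1-t}<x<b^*\sqrt{1-t}$.
\begin{itemize}
\item Inside the strip $\{-d\sqrt{1-s}<X_s<b^*\sqrt{1-s}\}$, $V_*$ is built from $\phi,\psi$ in the scaled variable, so \eqref{Valuefunc_PDE} holds away from zero.
\item The coefficients satisfy \eqref{eq:Bderivatives}, so the local-time term $pV'(s,0+)-(1-p)V'(s,0-)$ vanishes.
\item Hence the It\^o--Tanaka computation above shows $V_*(s\wedge\tau^*\wedge\sigma^d,X_{s\wedge\tau^*\wedge\sigma^d})$ is a local martingale. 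It is bounded on the strip, since $V_*=\sqrt{1-s}\,f(z)$ with $z$ in a compact interval, so it is a true martingale.
\item Value matching \eqref{ValueMatching_1}--\eqref{ValueMatching_2} gives $V_*=\GG_1$ at $\tau^*$ and $V_*=\LL_1$ at $\sigma^d$.
\item Both boundaries shrink to $0$ as $s\uparrow1$, so $\tau^*\wedge\sigma^d\le 1$ and $V_*\to 0=\GG_1(1,0)=\LL_1(1,0)$.
\item Taking $s\uparrow 1$ with dominated convergence gives $V_*(t,x)=R_1(t,x;\tau^*,\sigma^d)\le \overline V(t,x)$.
\end{itemize}

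\emph{Upper bound.} Take an arbitrary $\rho\in\sT_t$ and apply the It\^o--Tanaka formula to $V_*(s,X_s)$ on $[t,\rho\wedge\sigma^d]$. The formula applies in the generalized sense because $f^{b^*,d}$ is $C^1$ at $b^*$, which is smooth fit. Smooth fit must be checked from the optimality of $b^*$. If $b^*>0$ is an interior maximiser of $\Gamma_{1,d}$, then $\Gamma_{1,d}'(b^*)=0$. Differentiating the identity $\sG_1(b)=A(b,d)\phi(b)+B_+(b,d)\psi(b)$, using that $A$ and $B_+$ are affine in $\Gamma_{1,d}$ (as shown in Proposition~\ref{prop:maximiser}), gives $f_+'(b^*-)=\sG_1'(b^*)$. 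If $b^*=0$, a one-sided inequality suffices.

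The resulting decomposition is treated region by region:
\begin{itemize}
\item \textbf{Continuation strip.} The drift and local-time terms vanish, as above.
\item \textbf{Stopping region $x>b^*\sqrt{1-s}$.} Here $V_*=\GG_1=\sqrt{1-s}\,\sG_1(z)$. A direct computation shows $(\partial_s-\frac{x}{1-s}\partial_x+\frac12\partial_{xx})\GG_1=\frac{1}{2\sqrt{1-s}}\bigl(-\sG_1-z\sG_1'+\sG_1''\bigr)$, which is $\le 0$ by Assumption~\ref{supermartingale_assumption}.
\item \textbf{Boundary $b^*\sqrt{1-s}$.} There is no local-time contribution there, by smooth fit.
\end{itemize}
So $V_*(s\wedge\rho\wedge\sigma^d,\cdot)$ is a local supermartingale. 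To control it, use that $|V_*(s,x)|\le C(\sqrt{1-s}+|x|)$, which follows from the linear growth of $\sG_1,\sL_1$ in the main example, together with $\EE\sup_s|X_s|<\infty$. Localising and applying Fatou then gives $V_*(t,x)\ge\EE_{t,x}[V_*(\rho\wedge\sigma^d,X_{\rho\wedge\sigma^d})]$.

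Finally compare $V_*$ with the payoffs:
\begin{itemize}
\item On $\{\sigma^d<\rho\}$ we have $V_*=\LL_1$.
\item On $\{\rho\le\sigma^d\}$ with $X_\rho\ge0$, Corollary~\ref{Cor:valuebddpayoffs} gives $V_*\ge\GG_1$.
\item Where $X_\rho<0$, $\GG_1=-\infty$, so the inequality is trivial.
\item At $\rho=\sigma^d$ the payoff is $\GG_1$, which is $-\infty$ off $x\ge0$, or at $x=0=d$ coincides with the bound.
\end{itemize}
Hence $V_*\ge\overline V$. Combined with attainment, $V_*=\overline V$ and $\tau^*$ is optimal.

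The case $d=0$ follows the same steps. The only change is that the region $x\le0$ is absorbing, with payoff $\LL_1$, so the local-time condition is not needed.

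The main obstacle is the regularity at $b^*\sqrt{1-s}$ (smooth fit), together with justifying the change-of-variable formula across the moving boundary and at the kink at $0$. I would handle this with the local-time-on-curves formula of Peskir, or by mollifying. The integrability needed to pass from local supermartingale to the inequality for general $\rho$ is routine by comparison. If smooth fit fails at $b^*=0$, the formula instead gives a nonpositive local-time term $\frac12(\sG_1'(0)-f_+'(0))\,d\tilde L$, whose sign follows from the maximality of $b^*=0$.
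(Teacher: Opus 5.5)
Your proposal is correct and follows the same two-sided verification as the paper. Both use It\^o--Tanaka for $V_{*,d}$ stopped at $\sigma^d$, with the drift in the stopping region controlled by Assumption~\ref{supermartingale_assumption}, the local time at $0$ removed by \eqref{eq:Bderivatives}, Corollary~\ref{Cor:valuebddpayoffs} for the comparison with $\GG_1$, and a bounded martingale on the continuation strip for attainment.

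You are more careful than the paper in two places: smooth fit on the curve $b^*\sqrt{1-s}$, which the paper's It\^o--Tanaka step uses without comment, and the case $b^*=0$. Your linear-growth integrability bound replaces the paper's lower bound, which rests on the asserted monotonicity of $V_{*,d}$ in $x$. That bound in fact holds for general payoffs, not just the main example, because continuity of $\GG_1$ at $t=1$ forces $\sG_1(z)/z$ to converge as $z\to\infty$.
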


\begin{proof}
    We establish the result for the case $d>0$. The case $d=0$ follows similarly. 

    Suppose $d>0$.  Fix $t_{0}\in[0,1)$, we show that $V_{*,d}(t_{0},x)=\overline{V}_{1,\sigma^d}(t_{0},x)$ by first proving $V_{*,d}(t_{0},x)\geq \overline{V}_{1,\sigma^d}(t_{0},x)$, and then proving the reverse inequality.

    For $t_{0}\leq t\leq 1$, let $M=(M_{t})_{t_{0}\leq t\leq 1}$ be given by $M_{t}=V_{*,d}(t \wedge \sigma^d,X_{t \wedge \sigma^d})$. Using the It\^o-Tanaka formula (see Revuz and Yor~\cite[Chapter~VI, p.~234]{revuz_yor_1999}), we have 
    \begin{equation*}
       \begin{split}
           dM_{t}&= \mathbb{I}_{ \{ t < \sigma^d \} } \left.  \left\{\Dot{V}_{*,d}(t,X_{t})-\frac{X_{t}}{1-t}V_{*,d}'(t, X_{t})+\frac{1}{2}V_{*,d}''(t, X_{t})\right\}dt+ \mathbb{I}_{ \{ t < \sigma^d \} } V_{*,d}'(t, X_{t})dB_{t} \right. \\
           &\hspace{55mm}  +\mathbb{I}_{ \{ t < \sigma^d \} } \left[pV_{*,d}'(t,0^{+})-(1-p)V_{*,d}'(t,0^{-})\right]d\Tilde{L}_{t}^{0} . 
       \end{split}
    \end{equation*}
    Recalling \eqref{Valuefunc_PDE}-\eqref{ConditionLocalTime_original}, in the continuation region, i.e. $-d\sqrt{1-t}<X_{t}<b^*\sqrt{1-t}$, both the drift term and the local time term vanish. Moreover, when $x\leq -d\sqrt{1-t}$ the game is stopped immediately by Player 2.
    Thus, 
    \begin{equation*}
      \begin{split}
        dM_{t}&= \mathbb{I}_{ \{ t < \sigma^d \} } \left(\Dot{\mathbb{G}}_{1}(t,X_{t})-\frac{X_t}{1-t}\mathbb{G}'_{1}(t,X_t)+\frac{1}{2}\mathbb{G}_{1}''(t,X_t)\right)I_{[b^*\sqrt{1-t},\infty)}(X_{t})dt\\
        &\hspace{+8cm}+ \mathbb{I}_{ \{ t < \sigma^d \} }V_{*,d}'(t, X_{t})dB_{t}.
      \end{split}
  \end{equation*}
    Recall that $\mathbb{G}_{1}(t,x)=\sqrt{1-t}\mathcal{G}_{1}(\frac{x}{\sqrt{1-t}})$. Then, with $Z_t = \frac{X_t}{\sqrt{1-t}}$,
  \begin{equation*}
       dM_{t} =  \mathbb{I}_{ \{ t < \sigma^d \} }\frac{1}{2\sqrt{1-t}}\left[-\mathcal{G}_{1}\left( Z_t\right)-Z_t\mathcal{G}'_{1}\left(Z_t\right)+\mathcal{G}''_{1}\left( Z_t \right)\right]I_{[b^*,\infty)}(Z_{t})dt
       + \mathbb{I}_{ \{ t < \sigma^d \} } V_{*,d}'(t, X_{t})dB_{t}.
  \end{equation*}
  Let $A$ be the decreasing process given by $$A_t = \int_0^{t \wedge \sigma^d}\frac{1}{2\sqrt{1-t}}\left[-\mathcal{G}_{1}\left( Z_s\right)-Z_s\mathcal{G}'_{1}\left(Z_s\right)+\mathcal{G}''_{1}\left( Z_s \right)\right]I_{[b^*,\infty)}(Z_{s})ds$$ and
let $N$ be the local martingale given by $N_t = \int_0^{t \wedge \sigma^d}  V_{*,d}'(t, X_{s})dB_{s}$. Then $M_t = A_t + N_t \leq N_t$.
Moreover, since $V_{*,d}$ is increasing in $x$, $M_t \geq V_{*,d}(t \wedge \sigma^d, X_{\sigma^d}) \geq \sqrt{1- t \wedge \sigma^d} \sL_1(-d) \geq \sL_1(-d) \wedge 0$ and $M$ is bounded below. Then $N$ is also bounded below, and hence a supermartingale. {\em A fortiori}, $M$ is a supermartingale.

    By the Optional Stopping theorem for a supermartingale, it follows that for $\tau \in \mathcal{T}_{t_0}$, 
    \begin{equation*}
    \begin{split}
      V_{*,d}(t_{0},x)&\geq \mathbb{E}[V^{*,d}(\tau \wedge \sigma^d_{t_{0}},X_{\tau \wedge \sigma^d_{t_{0}}})]\\
      &= \mathbb{E}[V_{*,d}(\tau,X_{\tau})\mathbb{I}_{\{\tau\leq\sigma^d_{t_{0}}\}}+V_{*,d}(\sigma^d_{t_{0}},X_{\sigma^d_{t_{0}}})\mathbb{I}_{\{\sigma^d_{t_{0}}<\tau\}}], \\
      &\geq \mathbb{E}[\mathbb{G}_{1}(\tau,X_{\tau})\mathbb{I}_{\{\tau\leq\sigma^d_{t_{0}}\}}+\mathbb{L}_{1}(\sigma^d_{t_{0}},X_{\sigma^d_{t_{0}}})\mathbb{I}_{\{\sigma^d_{t_{0}}<\tau\}}].
    \end{split}
    \end{equation*}
    Therefore,  
    $$V_{*,d}(t_{0},x)\geq \sup_{\tau \in \sT_{t_{0}}}\mathbb{E}_{t_{0},x}[\mathbb{G}_{1}(\tau,X_{\tau})\mathbb{I}_{\{\tau<\sigma^d_{t_{0}}\}}+\mathbb{L}_{1}(\sigma^d_{t_{0}},X_{\sigma^d_{t_{0}}})\mathbb{I}_{\{\sigma^d_{t_{0}}<\tau\}}]=\overline{V}_{1,\sigma^d}(t_{0},x). $$

    Now we aim to show the reverse inequality i.e. $\overline{V}_{1,\sigma^d}(t_{0},x)\geq V_{*,d}(t_{0},x)$. When $x\geq b^*\sqrt{1-t}$, from Corollary~\ref{Cor:valuebddpayoffs} we have $\overline{V}_{1,\sigma^d}(t_{0},x)\geq \mathbb{G}_{1}(t_{0},x)=V_{*,d}(t_{0},x)$. Moreover, since Player 2 stops whenever $X_t \leq -d\sqrt{1-t}$, we have $\overline{V}_{1,\sigma^d}(t_{0},x)= \mathbb{L}_{1}(t_{0},x)=V_{*,d}(t_{0},x)$ when $x\leq -d\sqrt{1-t}$. It remains to consider $x$ in the continuation region, i.e. $-d\sqrt{1-t}< x< b^*\sqrt{1-t}$. 
    
Let $\tau^*$ be as in the statement of the theorem. Let $M^* = (M_{t \wedge \tau^*})_{0 \leq t < 1}$. Then $dM^*_t = I_{ \{ t < \sigma^d \wedge \tau^* \} }V_{*,d}'(t, X_{t})dB_{t}$. On the continuation region there is a formula for $V_{*,d}$ and $V'_{*,d}$ is bounded. Then $M^*$ is a martingale. 
By the Optional Stopping theorem again, we have
     \begin{equation*}
    \begin{split}
      V_{*,d}(t_{0},x)&= \mathbb{E}_{t_{0},x}[V_{*,d}(\tau^{*}\wedge \sigma^d_{t_{0}},X_{\tau^{*}\wedge \sigma^d_{t_{0}}})],\\
       &= \mathbb{E}_{t_{0},x}[V_{*,d}(\tau^{*},X_{\tau^{*}})\mathbb{I}_{\{\tau^{*}\leq\sigma^d_{t_{0}}\}}+V_{*,d}(\sigma^d_{t_{0}},X_{\sigma^d_{t_{0}}})\mathbb{I}_{\{\sigma^d_{t_{0}}<\tau^{*}\}}],\\
       &=\mathbb{E}_{t_{0},x}[\mathbb{G}_{1}(\tau^{*},X_{\tau^{*}})\mathbb{I}_{\{\tau^{*}\leq\sigma^d_{t_{0}}\}}+\mathbb{L}_{1}(\sigma^d_{t_{0}},X_{\sigma^d_{t_{0}}})\mathbb{I}_{\{\sigma^d_{t_{0}}<\tau^{*}\}}].
    \end{split}
    \end{equation*}
    where the final equality comes from the definition of $V^{*,d}$ on the stopping boundaries. Hence, we have 
    \begin{equation*}
        \begin{split}
         \overline{V}_{1,\sigma^d}(t_{0},x)&= \sup_{\tau \in \sT_{t_{0}}}\mathbb{E}_{t_{0},x}[\mathbb{G}_{1}(\tau,X_{\tau})\mathbb{I}_{\{\tau\leq\sigma^d_{t_{0}}\}}+\mathbb{L}_{1}(\sigma^d_{t_{0}},X_{\sigma^d_{t_{0}}})\mathbb{I}_{\{\sigma^d_{t_{0}}<\tau\}}] \\
         &\geq \mathbb{E}_{t_{0},x}[\mathbb{G}_{1}(\tau^{*},X_{\tau^{*}})\mathbb{I}_{\{\tau^{*}\leq\sigma^d_{t_{0}}\}}+\mathbb{L}_{1}(\sigma^d_{t_{0}},X_{\sigma^d_{t_{0}}})\mathbb{I}_{\{\sigma^d_{t_{0}}<\tau^{*}\}}]=V_{*,d}(t_{0},x).
        \end{split}
    \end{equation*}

\end{proof}

\subsection{Existence of a Nash Equilibrium in the Two-Player Game}

In this section, we study the two-player optimal stopping game introduced in Section 2.1, our goal being to find sufficient conditions for the existence of a Nash equilibrium in this game setup. (We study uniqueness in the next section.)

In the last section, given Player 2 uses a square-root threshold strategy $\sigma^d$, we found that under Assumption \ref{ass:Gamma} and \ref{supermartingale_assumption}, the optimal stopping strategy for Player 1 is to also consider a square-root threshold strategy given by $\tau^b=\inf\{t\geq 0: X_t\geq b\sqrt{1-t}\}$ where $b$ is the global maximiser of the function $\Gamma_{1,d}$ given in \eqref{Gamma_expression_}. $\tau^{b^*}$ is optimal in the class of all stopping rules.

Given that Player 1 considers a square-root threshold strategy $\tau^b$, we expect that the optimal stopping strategy $\sigma^{d}$ for Player 2 is also of square-root threshold form. Define $\Gamma_{2,b}=\Gamma_{2,b, p, \alpha, \theta} : [0,\infty) \to \RR$ by 
\begin{equation}\label{Gamma_Expression_2}
   \Gamma_{2,b}(d)=p \left[\frac{\mathcal{G}_{2}(-d)\phi(b)-\phi(d)\mathcal{L}_{2}(b)}{(1-p)\psi(b)\phi(d)+p\phi(b)\psi(d)}\right].
\end{equation}
We make analogous assumptions for Player 2:
\begin{assumption}
\label{ass:Gamma_2}
 $\Gamma_2$ is maximised at $d^*=d^*(b) = d^*(b,p,\alpha,\theta) \in [0,\infty)$ and $\Gamma_2$ is non-increasing on $[d^*,\infty)$.
\end{assumption}

\begin{assumption}\label{supermartingale_assumption_2}
    For any $z\in (-\infty,-d^*]$, it holds that $-\mathcal{G}_{2}(z)-z\mathcal{G}'_{2}(z)+\mathcal{G}''_{2}(z)\leq 0$.
\end{assumption}
Then, repeating the same analysis as in the last section, we know that under Assumptions \ref{ass:Gamma_2} and \ref{supermartingale_assumption_2}, the optimal stopping strategy for Player 2 is given by $\sigma^{d^*}=\inf \{t\geq 0: X_t\leq -d^* \sqrt{1-t}\}$ where $d^*\in[0,\infty)$ is the maximiser of the function $\Gamma_{2,b}$.

Our goal is to find sufficient conditions such that there exists a pair $(b^*,d^*) \in (0,\infty)^2$ such that $\Gamma_{1,d^*}$ attains its maximum at $b^*=b^*(d^*)$ and $\Gamma_{2,b^*}$ attains its maximum at $d^*=d^*(b^*)$. The pair $(b^*,d^*)$ will then characterise a candidate Nash equilibrium.

Define $h_1(z)=\mathcal{G}_{1}'(z)-z\mathcal{G}_{1}(z)$ and $h_2(z)=\mathcal{G}_{2}'(-z)-z\mathcal{G}_{2}(-z)$.
When we say $h :[0,\infty) \to \RR$ is unimodal we mean that $h$ is $C^1$ on $[0,\infty)$ and there exists a unique $z^* \in [0,\infty]$ such that $h'>0$ on $[0,z^*)$ and $h'<0$ on $(z^*,\infty)$. 

\begin{assumption}\label{Assumption_NE_general}
    \begin{enumerate}
     \item[(i)] There exist solutions $\hat{b}\in(0,\infty)$ to $h_1(\cdot)=0$ and $\hat{d} \in (0,\infty)$ to $h_{2}(\cdot)=0$. 
     \item[(ii)] For every $d\in[0,\hat{d}]$, we have that 
     \begin{enumerate}
         \item $\Gamma_{1,d}(\hat{b})>0$,
         \item $\Gamma_{1,d}$ is unimodal, and attains its maximum at $b^*=b^*(d) = b^*(d,p,\alpha,\theta) \in (0,\infty)$. 
     \end{enumerate}    
     \item[(iii)] For every $b\in[0,\hat{b}]$, we have that 
     \begin{enumerate}
         \item $\Gamma_{2,b}(\hat{d})>0$.
         \item $\Gamma_{2,b}$ is unimodal, and attains its maximum at $d^*=d^*(b) = d^*(b,p,\alpha,\theta) \in (0,\infty)$. 
    \end{enumerate}
    \end{enumerate}
\end{assumption}
Note that Assumption \ref{Assumption_NE_general}(ii)(b) implies Assumption \ref{ass:Gamma} and \ref{Assumption_NE_general}(iii)(b) implies Assumption~\ref{ass:Gamma_2}.

\begin{theorem}\label{Main_Theorem_NE}
    Under Assumptions \ref{supermartingale_assumption}, \ref{supermartingale_assumption_2}, and \ref{Assumption_NE_general}, there exists a Nash equilibrium for the two-player optimal stopping game introduced in Section 2.1.  
\end{theorem}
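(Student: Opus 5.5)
The plan is to reduce the existence of a Nash equilibrium to finding a simultaneous fixed point $(b^*,d^*)$ with $b^*=b^*(d^*)$ and $d^*=d^*(b^*)$. I will produce this pair with Miranda's theorem applied to the pair of first-order conditions on the rectangle $[0,\hat b]\times[0,\hat d]$. Once such a pair is found, Theorem~\ref{Verification_theorem_2player} identifies best responses. Given $\sigma^{d^*}$, it shows that $\tau^{b^*}$ is optimal for Player 1 among all stopping times. Assumption~\ref{ass:Gamma} holds because of Assumption~\ref{Assumption_NE_general}(ii)(b), and Assumption~\ref{supermartingale_assumption} is assumed directly. The symmetric argument, using Assumptions~\ref{ass:Gamma_2} and \ref{supermartingale_assumption_2}, shows that $\sigma^{d^*}$ is optimal for Player 2 given $\tau^{b^*}$. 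Together these verify Definition~\ref{Nash_Equilibrium}.

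The key computation is a formula for $\partial_b\Gamma_{1,d}(b)$. Write $\Gamma_{1,d}=(1-p)N/D$ with $N(b)=\sG_1(b)\phi(d)-\phi(b)\sL_1(-d)$ and $D(b)=(1-p)\psi(b)\phi(d)+p\phi(b)\psi(d)$. Using \eqref{eqn:phipsiderivaties}, $N'=\sG_1'\phi(d)-b\phi(b)\sL_1(-d)$, so $N'-bN=\phi(d)h_1(b)$, and $D'=bD+(1-p)\phi(d)$. Hence
\[
\partial_b\Gamma_{1,d}(b)=\frac{(1-p)\phi(d)}{D(b)}\bigl(h_1(b)-\Gamma_{1,d}(b)\bigr).
\]
The analogous identity holds for $\partial_d\Gamma_{2,b}(d)$ with $h_2$. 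Two consequences follow for each $d\in[0,\hat d]$:
\begin{itemize}
\item at $b=\hat b$, Assumption~\ref{Assumption_NE_general}(i) and (ii)(a) give $h_1(\hat b)-\Gamma_{1,d}(\hat b)=-\Gamma_{1,d}(\hat b)<0$, so $\partial_b\Gamma_{1,d}(\hat b)<0$;
\item by unimodality with an interior maximiser (ii)(b), $\partial_b\Gamma_{1,d}>0$ on $[0,b^*(d))$, in particular near $b=0$.
\end{itemize}
So $b^*(d)\in(0,\hat b)$, and likewise $d^*(b)\in(0,\hat d)$ for $b\in[0,\hat b]$.

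Now define $F(b,d)=\bigl(F_1,F_2\bigr)=\bigl(h_1(b)-\Gamma_{1,d}(b),\,h_2(d)-\Gamma_{2,b}(d)\bigr)$. This has the same signs as the two partial derivatives, because $(1-p)\phi(d)/D>0$. Then $F_1\ge 0$ on $\{b=0\}$, $F_1<0$ on $\{b=\hat b\}$, $F_2\ge0$ on $\{d=0\}$ and $F_2<0$ on $\{d=\hat d\}$. Miranda's theorem \cite{Miranda1940} then yields a zero $(b^*,d^*)$ in the rectangle. Unimodality says a stationary point in $[0,\hat b]$ is the maximiser $b^*(d^*)$, and similarly for $d^*$. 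Note that $b^*>0$ and $d^*>0$ automatically, since $F_1(\hat b,\cdot)<0$ forces the zero to lie strictly inside on the $b$-side.

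The main obstacle is continuity of $F$ on the closed rectangle. $D(b,d)$ vanishes at the corner $(0,0)$, so $\Gamma_{1,d}(b)$ and $\Gamma_{2,b}(d)$ may be discontinuous there. On the edge $b=0$ with $d>0$, the formula reduces to $\Gamma_{1,d}(0)=(1-p)(\sG_1(0)-\sL_1(-d)/\phi(d))\cdot\phi(d)/(p\psi(d))$, which is fine. To avoid the corner, my first attempt is to apply Miranda on $[\varepsilon,\hat b]\times[\varepsilon,\hat d]$. This uses the strict positivity of $\partial_b\Gamma_{1,d}$ on $[0,b^*(d))$ for $d\in[\varepsilon,\hat d]$, and of $\partial_d\Gamma_{2,b}$ similarly. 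I would then check that $b^*(d)$ and $d^*(b)$ are bounded away from $0$ uniformly in the other variable, for instance by continuity of $b^*(\cdot)$ on the compact interval $[0,\hat d]$. Only the positivity of $F_1$ on $\{b=\varepsilon\}$ then needs justification for small $\varepsilon$.

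If that step fails, the fallback is to apply Brouwer's theorem directly to $(b,d)\mapsto(b^*(d),d^*(b))$, which maps the rectangle into itself by the bullet points above. Continuity of $b^*(\cdot)$ would then follow from uniqueness of the maximiser together with joint continuity of $\Gamma$ away from the corner.
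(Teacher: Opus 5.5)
Your proposal is correct and is essentially the paper's proof: the same identity $\partial_b\Gamma_{1,d}=\frac{(1-p)\phi(d)}{(1-p)\psi(b)\phi(d)+p\phi(b)\psi(d)}\,(h_1-\Gamma_{1,d})$, Miranda's theorem applied to $(h_1(b)-\Gamma_{1,d}(b),\,h_2(d)-\Gamma_{2,b}(d))$ on $[0,\hat b]\times[0,\hat d]$ with the same face signs, and then Theorem~\ref{Verification_theorem_2player} for each player. Your concern about the corner $(0,0)$ is a legitimate point that the paper passes over when it simply asserts that $T$ is continuous: that denominator vanishes there, so the map need not be continuous on the closed rectangle. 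Your shrunk-rectangle fix goes through, because the strict sign change of $h_1-\Gamma_{1,d}$ at $b^*(d)$, together with joint continuity away from the corner, makes $b^*(\cdot)$ continuous, hence $\min_{[0,\hat d]}b^*>0$, and similarly for $d^*(\cdot)$.
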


\begin{proof}
    For the purpose of this proof we write $\Gamma_{1,d}$ and $\Gamma_{2,b}$ given in \eqref{Gamma_expression_} and \eqref{Gamma_Expression_2} as bivariate functions:
    \begin{equation*}
        \begin{split}
             \Gamma_{1}(b,d) &=(1-p)\left[\frac{\mathcal{G}_{1}(b)\phi(d)-\phi(b)\mathcal{L}_{1}(-d)}{(1-p)\psi(b)\phi(d)+p\phi(b)\psi(d)}\right],\\
             \Gamma_2(b,d)&=p \left[\frac{\mathcal{G}_{2}(-d)\phi(b)-\phi(d)\mathcal{L}_{2}(b)}{(1-p)\psi(b)\phi(d)+p\phi(b)\psi(d)}\right].
        \end{split}
    \end{equation*}
It follows that 
\begin{equation*}
    \begin{split}
        \frac{\partial \Gamma_{1}(b,d)}{\partial b}&= \frac{(1-p)\phi(d)}{(1-p)\psi(b)\phi(d)+p\phi(b)\psi(d)}\left( h_1(b)-\Gamma_{1}(b,d)\right),\\
        \frac{\partial \Gamma_{2}(b,d)}{\partial d}&= \frac{p\phi(b)}{(1-p)\psi(b)\phi(d)+p\phi(b)\psi(d)}\left(h_2(d)-\Gamma_{2}(b,d)\right). 
    \end{split}
\end{equation*}

For $i=1,2$, define $T_i :[0,\hat{b}]\times[0,\hat{d}] \to \RR$ by $T_i(b,d) =h_i(b) - \Gamma_i(b,d)$. Let $T: [0,\hat{b}]\times[0,\hat{d}] \to \RR^2$ be given by $T(b,d)=(T_1(b,d),T_2(b,d))$. Then $T$ is a continuous function since $h_{1}, h_{2}, \Gamma_1$ and $\Gamma_2$ are all continuous functions. 

Under Assumption \ref{Assumption_NE_general}(i) there exists $\hat{d}$ such that $h_2(\hat{d})=0$, and then by Assumption \ref{Assumption_NE_general}(ii)(b), for $d \in [0,\hat{d}]$ there exists $b^*(d)$ such that $b^*$ is the global maximiser of $\Gamma_{1}(\cdot,d)$. It follows that $h_1(b) > \Gamma_1(b,d)$ on $[0,b^*(d))$ and hence 
$T_1(0,d)>0$. Conversely, since $h_1(\hat{b})=0<\Gamma_{1,d}(\hat{b})$ we have $T_1(\hat{b},d)<0$.
Similarly, for all $b \in [0,\hat{b}]$, $T_2(b,0)>0>T_2(b,\hat{d})$.
It follows from Miranda's theorem \cite{Miranda1940} that there exists $(b^{*}, d^{*})\in D$, such that $(T_{1}(b^{*}, d^{*}),T_{2}(b^{*}, d^{*}))=(0,0)$.

It remains to show that $(\tau^*,\sigma^*)$ is a Nash equilibrium where $\tau^*= \tau^{b^*}$ and $\sigma^* = \sigma^{d^*}$.
Assumption~\ref{Assumption_NE_general} implies that
Assumption \ref{ass:Gamma} 
holds. Then, by Theorem~\ref{Verification_theorem_2player}, $\tau^{b^*}$ is optimal for Player 1 (from any starting point $(t_0,X_{t_0}=x) \in [0,1) \times \RR$).
Similarly, Assumption \ref{ass:Gamma_2} holds  and by the direct analogue of Theorem~\ref{Verification_theorem_2player} but for Player 2, $\sigma^{d^*}$ is optimal for Player 2. Indeed, $(\tau^*,\sigma^*)$ forms a subgame perfect Nash equilibrium of threshold type. 

\end{proof}

We now give some simpler sufficient conditions under which Assumptions~\ref{supermartingale_assumption}, \ref{supermartingale_assumption_2} and \ref{Assumption_NE_general} hold.

\begin{prop}\label{Suff_conditions_ass5}
    Suppose that for all $z\in(0,\infty)$ we have $\mathcal{G}'_1(z)>0$ and $\mathcal{L}_2(z)<0$ and for all $z\in(-\infty,0)$ we have $\mathcal{G}'_2(z)<0$ and $\mathcal{L}_1(z)<0$. Suppose that there exist solutions $\hat{b}\in(0,\infty)$ to $h_1(\cdot)=0$ and $\hat{d} \in (0,\infty)$ to $h_{2}(\cdot)=0$. 
    \begin{enumerate}
     
\item[(i)]   Suppose that $h_1$ is unimodal on $[0,\infty)$ with a unique maximiser $z_{h_1}^*\in (0, \hat{b})$, and suppose that
    $h_1(0)>\Gamma_{1,d}(0)$ for all $d\in[0,\hat{d}]$.  
    Then, Assumption \ref{supermartingale_assumption} and \ref{Assumption_NE_general}(ii) hold.

\item[(ii)]    Suppose that $h_2$ is unimodal on $[0,\infty)$ with a unique maximiser $z_{h_2}^*\in (0, \hat{d})$, and suppose that
    $h_2(0)>\Gamma_{2,b}(0)$ for all $b\in[0,\hat{b}]$.  
    Then, Assumption \ref{supermartingale_assumption_2} and \ref{Assumption_NE_general}(iii) hold.
    \end{enumerate}
\end{prop}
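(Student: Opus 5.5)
The plan rests on the derivative identity from the proof of Theorem~\ref{Main_Theorem_NE},
\[
\frac{\partial \Gamma_{1}(b,d)}{\partial b} = c(b,d)\bigl(h_1(b)-\Gamma_1(b,d)\bigr), \qquad c(b,d)=\frac{(1-p)\phi(d)}{(1-p)\psi(b)\phi(d)+p\phi(b)\psi(d)}>0 ,
\]
together with the observation that $h_1'(z) = \sG_1''(z) - \sG_1(z) - z\sG_1'(z)$. The latter means Assumption~\ref{supermartingale_assumption} is exactly the statement $h_1'\le 0$ on $[b^*,\infty)$. Since $h_1$ is unimodal with maximiser $z^*_{h_1}$, it therefore suffices to show $b^*(d) \ge z^*_{h_1}$. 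I will prove (i) in detail; (ii) is the mirror image, obtained by exchanging the roles of $b,d$, of $p,1-p$ and of $\sG_1,\sL_1$ with $\sG_2(-\cdot),\sL_2$.

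\textbf{Step 1 (sign at $\hat b$).} Fix $d\in[0,\hat d]$. The denominator of $\Gamma_{1,d}$ is positive for $b>0$, so the sign of $\Gamma_{1,d}(\hat b)$ is the sign of $\sG_1(\hat b)\phi(d)-\phi(\hat b)\sL_1(-d)$. From $h_1(\hat b)=0$ we get $\sG_1'(\hat b)=\hat b\,\sG_1(\hat b)$, and $\sG_1'>0$ then forces $\sG_1(\hat b)>0$. Next, $\sL_1<0$ on $(-\infty,0)$, and by continuity $\sL_1(0)\le 0$. Hence the numerator is strictly positive, so $\Gamma_{1,d}(\hat b)>0$. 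This gives Assumption~\ref{Assumption_NE_general}(ii)(a), and also $g(\hat b)<0$ for
\[
g(b):=h_1(b)-\Gamma_{1,d}(b).
\]

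\textbf{Step 2 (single crossing of $g$).} Differentiating $g$ and using the identity gives the linear ODE
\[
g'(b) + c(b,d)\,g(b) = h_1'(b).
\]
With $C(b)=\int_0^b c(u,d)\,du$ this reads $\frac{d}{db}\bigl(e^{C(b)}g(b)\bigr)=e^{C(b)}h_1'(b)$. The hypothesis $g(0)=h_1(0)-\Gamma_{1,d}(0)>0$ holds.

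On $[0,z^*_{h_1}]$ we have $h_1'\ge0$, so $e^{C}g$ is nondecreasing there. It stays at least $g(0)>0$, so $g>0$ on $[0,z^*_{h_1}]$.

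On $(z^*_{h_1},\infty)$ we have $h_1'<0$, so $e^{C}g$ is strictly decreasing there. By Step 1 it is negative at $\hat b$, and $z^*_{h_1}<\hat b$. Hence $g$ has exactly one zero $b^*\in(z^*_{h_1},\hat b)$, with $g>0$ on $[0,b^*)$ and $g<0$ on $(b^*,\infty)$.

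Because $c>0$, the identity converts this into $\Gamma_{1,d}'>0$ on $(0,b^*)$ and $\Gamma_{1,d}'<0$ on $(b^*,\infty)$. So $\Gamma_{1,d}$ is unimodal with maximiser $b^*(d)\in(0,\infty)$, which is Assumption~\ref{Assumption_NE_general}(ii)(b). Finally, $b^*>z^*_{h_1}$ gives $h_1'<0$ on $[b^*,\infty)$, which is Assumption~\ref{supermartingale_assumption}.

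\textbf{Main obstacle and remaining checks.} The main obstacle is the tangency case in which a zero of $g$ could sit exactly at $z^*_{h_1}$, where $h_1'=0$. A pointwise argument of the form ``$g'=h_1'$ at a zero'' is inconclusive there, which is why I use the integrating-factor form of the ODE instead. A second, minor point is regularity of $\Gamma_{1,d}$ at $b=0$ when $d=0$, where the denominator $(1-p)\psi(b)$ vanishes. There I would check directly from \eqref{ExpressionAB_0} that $\Gamma_{1,0}(0)$ is understood as the limit as $b\downarrow0$, and that $C$ remains finite on compact subsets of $(0,\infty)$. The monotonicity argument can then be run on $(0,\infty)$, using $\liminf_{b\downarrow0}g(b)>0$ in place of $g(0)>0$.
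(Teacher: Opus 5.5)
Your proposal is correct. Step 1 is the same as in the paper, but your Step 2 reaches the single-crossing property by a different mechanism.

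The paper argues pointwise with $\Upsilon_1=\Gamma_{1,d}-h_1=-g$. It takes the first root $z^*$ of $\Upsilon_1$ and notes $\Upsilon_1'(z^*)\ge 0$. Since $\Upsilon_1'=-h_1'$ at any root, it rules out $z^*<z^*_{h_1}$. It then argues that every root beyond $z^*_{h_1}$ is an upcrossing, so there is only one.

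Your integrating-factor identity $\frac{d}{db}\bigl(e^{C(b)}g(b)\bigr)=e^{C(b)}h_1'(b)$ turns the sign of $h_1'$ into monotonicity of $e^{C}g$ on whole intervals. This gives $g>0$ on $[0,z^*_{h_1}]$ and exactly one transversal zero in $(z^*_{h_1},\hat b)$ in a single step. It also buys something concrete:
\begin{itemize}
\item The borderline case $z^*=z^*_{h_1}$ is excluded automatically. There the paper's derivative test gives $\Upsilon_1'(z^*)=0$ and is inconclusive, yet the paper asserts $z^*\in(z^*_{h_1},\hat b)$ without further argument.
\item You avoid the somewhat informal step ``every root is an upcrossing, hence there is one root.''
\end{itemize}
The paper's pointwise argument has the advantage of being the template it reuses later, in Proposition~\ref{p0.5_maximiser_prop} and Theorem~\ref{Theorem_multi_NE}.

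Your handling of the edge cases is also more careful than the paper's. The hypothesis only gives $\mathcal{L}_1<0$ on $(-\infty,0)$, so at $d=0$ one really needs your continuity argument giving $\mathcal{L}_1(0)\le 0$; the paper claims $\mathcal{L}_1(-d)<0$ for all $d\ge 0$. When $d=0$ the factor $c(b,0)=1/\psi(b)$ is not integrable at $0$. Running the monotonicity from any small $b$ with $g(b)>0$, using $\liminf_{b\downarrow0}g(b)>0$ as you propose, is the right fix.
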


\begin{proof}
We prove $(i)$. $(ii)$ follows similarly.

Recall that $h_1(z)=\mathcal{G}'_1(z)-z\mathcal{G}_1(z)$, and $\hat{b}$ is the solution to $h( \cdot )=0$.

    Since $\mathcal{G}'_1(\hat{b})>0$ and $\hat{b} \mathcal{G}_1(\hat{b})= \mathcal{G}'_1(\hat{b})$, we immediately have that $\mathcal{G}_1(\hat{b})>0$. Then, since by assumption 
    $\mathcal{L}_1(-d)<0$ for any $d \geq 0$, we have
    from the definition of $\Gamma_{1,d}$ in \eqref{Gamma_expression_} that $\Gamma_{1,d}(\hat{b})>0$ for any $d \geq 0$.

    Let $\Upsilon_1:(0,\infty) \to \RR$ be the continuous function given by $\Upsilon_1(z)=\Gamma_{1,d}(z)-h_1(z)$. Note that $\Upsilon_1(\hat{b})>0$. 
    Since $h_1(0)>\Gamma_{1,d}(0)$ we have that $\Upsilon_1(0)<0$.  Hence, by the Intermediate Value Theorem there exists at least one root such that $\Upsilon_1(z)=0$. We let $z^*$ be the first root of $\Upsilon_1$, we argue that it is the only root. Note that we must have $\Upsilon_1'(z^*) \geq 0$.

    Note that if $\Upsilon_1(z)=0$ then $\Gamma'_{1,d}(z)=0$, and $\Upsilon'(z) = - h'(z)$.
    
    Let $z^*_{h_1}$ be the unique maximum of $h_1$. If $z^* < z^*_{h_1}$
    then $h'(z^*)>0$ and $\Upsilon_1'(z^*) < 0$, a contradiction. So $z^* \geq z^*_{h_1}$. Now suppose $z > z^* \geq z^*_{h_1}$. If $z$ is a root of $\Upsilon_1$, then $\Upsilon_1'(z) = - h'(z) > 0$ and hence $z$ is an upcrossing. But, if every root of $\Upsilon_1$ is an upcrossing, then since $\Upsilon_1$ is continuous there can only be one root. It also follows that $z^*$ is an upcrossing, and that $z^* \in (z^*_{h_1},\hat{b})$. Let $b^* = b^*(d) = z^*$. 

    Recall that $$\Gamma_{1,d}'(b) = C_1(p,b,d)(h_1(b) - \Gamma_{1,d}(b) )= - C_1(p,b,d)\Upsilon_1(b)$$ where $C_1(p,b,d) = \frac{(1-p)\phi(d)}{(1-p)\psi(b)\phi(d) + p \psi(d)\phi(b)}$. Then since $\Upsilon_1 < 0$ on $(0,b^*)$ and $\Upsilon_1>0$ on $(b^*,\infty)$ we have that $\Gamma_{1,d}$ 
    is strictly increasing on $(0,b^*)$ and strictly decreasing on $(b^*, \infty)$. Then Assumption \ref{Assumption_NE_general}(ii) holds.

    Since $h_1$ is strictly decreasing on $(z^*_h,\infty)$ we have that $h_1' \leq 0$ on $[b^*,\infty) \subseteq (z^*_{h_1},\infty)$ so that $\mathcal{G}_1''(z) - z\mathcal{G}_1'(z) -  \mathcal{G}_1(z) \leq 0$ on this same set. Then Assumption \ref{supermartingale_assumption} holds.

\end{proof}

\subsection{A linear example}
\label{eg:linearexistence}
Previously, we have established sufficient conditions for the existence of a Nash equilibrium for general payoff functions. Now we specialize to the case of linear payoff functions. More specifically, we consider the case where
\begin{eqnarray*} 
\mathcal{G}_{1},\mathcal{L}_{2}:[0,\infty)\mapsto\mathbb{R} & & \mathcal{G}_{1}(z)=z-\alpha_1 \hspace{10mm} \mathcal{L}_{2}(z)=-\theta_2 z, \\
\mathcal{G}_{2},\mathcal{L}_{1}:(-\infty,0]\mapsto\mathbb{R} & & \mathcal{G}_{2}(z)=-z-\alpha_2 \hspace{10mm} \mathcal{L}_{1}(z)=\theta_1 z.
\end{eqnarray*}

First, we aim to find sufficient conditions for the existence of a Nash equilibrium. Later, we will study the uniqueness of the Nash equilibrium.

\begin{theorem}\label{AsymmetricTheorem-2player} Suppose $\sG_i$ and $\sL_i$ are as above for $i=1,2$.
Suppose $0<p<1$.

Suppose that $\theta_{1}\leq \frac{p}{1-p}$ and $\theta_{2}\leq\frac{1-p}{p}$. Then there exists a threshold-type Nash equilibrium.
\end{theorem}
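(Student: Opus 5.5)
The plan is to deduce the statement from Theorem~\ref{Main_Theorem_NE}, checking the hypotheses of Theorem~\ref{Main_Theorem_NE} through Proposition~\ref{Suff_conditions_ass5}.

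\textbf{Sign hypotheses of Proposition~\ref{Suff_conditions_ass5}.} These are immediate for the linear payoffs:
\begin{itemize}
\item $\sG_1'=1>0$ on $(0,\infty)$;
\item $\sL_2(z)=-\theta_2 z<0$ for $z>0$;
\item $\sG_2'=-1<0$ on $(-\infty,0)$;
\item $\sL_1(z)=\theta_1 z<0$ for $z<0$.
\end{itemize}

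\textbf{The functions $h_1$ and $h_2$.} For Player 1,
\[ h_1(z)=1-z(z-\alpha_1)=1+\alpha_1 z-z^2 .\]
This is a concave parabola, so it is unimodal on $[0,\infty)$ with maximiser $z^*_{h_1}=\alpha_1/2>0$. Its unique positive root is $\hat b=\tfrac12\bigl(\alpha_1+\sqrt{\alpha_1^2+4}\bigr)>\alpha_1/2$.

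For Player 2, the relevant function is the one appearing in $\partial\Gamma_2/\partial d$. Since $\frac{d}{dd}\sG_2(-d)=1$, this is
\[ \tfrac{d}{dz}\bigl[\sG_2(-z)\bigr]-z\sG_2(-z)=1+\alpha_2 z-z^2 .\]
I will read $h_2$ in this sense, which differs from the literal $\sG_2'(-z)-z\sG_2(-z)$ by a sign on the first term. This is the version used in the derivative identity in the proof of Theorem~\ref{Main_Theorem_NE}. With this reading, $h_2$ is unimodal with maximiser $\alpha_2/2$, and $\hat d=\tfrac12\bigl(\alpha_2+\sqrt{\alpha_2^2+4}\bigr)$.

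\textbf{Boundary inequality for Player 1.} The substantive check is $h_1(0)=1>\Gamma_{1,d}(0)$ for all $d\in[0,\hat d]$; this is where $\theta_1\le p/(1-p)$ enters.

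For $d=0$, $\Gamma_{1,0}(b)=(b-\alpha_1)/\psi(b)\to-\infty$ as $b\downarrow0$, so the inequality holds trivially in the limiting sense.

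For $d>0$, using $\psi(0)=0$ and $\phi(0)=1$,
\[ \Gamma_{1,d}(0)=\frac{(1-p)\bigl(\theta_1 d-\alpha_1\phi(d)\bigr)}{p\,\psi(d)} .\]
Since $\theta_1(1-p)\le p$ and $\alpha_1\phi(d)>0$, the numerator is strictly less than $p\,d$. Because $\psi'=z\psi+1\ge1$, we have $\psi(d)\ge d$. Hence $\Gamma_{1,d}(0)<d/\psi(d)\le 1=h_1(0)$.

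\textbf{Boundary inequality for Player 2.} The symmetric computation gives
\[ \Gamma_{2,b}(0)=\frac{p\bigl(\theta_2 b-\alpha_2\phi(b)\bigr)}{(1-p)\psi(b)} ,\]
and $\theta_2\le(1-p)/p$ yields $\Gamma_{2,b}(0)<1=h_2(0)$.

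\textbf{Conclusion via the earlier results.} Proposition~\ref{Suff_conditions_ass5} then delivers Assumptions~\ref{supermartingale_assumption}, \ref{supermartingale_assumption_2}, \ref{Assumption_NE_general}(ii) and \ref{Assumption_NE_general}(iii). Part (i) of Assumption~\ref{Assumption_NE_general} is the existence of $\hat b$ and $\hat d$ found above. Theorem~\ref{Main_Theorem_NE} then gives a (subgame perfect) threshold-type Nash equilibrium $(\tau^{b^*},\sigma^{d^*})$.

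\textbf{Expected obstacles.} The main technical care is at the corner where $b$ or $d$ equals $0$. There $\Gamma_i$ is singular: at $(0,0)$ the denominator $(1-p)\psi(b)\phi(d)+p\phi(b)\psi(d)$ vanishes. As a result, the map $T$ in the proof of Theorem~\ref{Main_Theorem_NE} is not continuous on the closed rectangle $[0,\hat b]\times[0,\hat d]$.

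My plan is to apply Miranda's theorem on $[\varepsilon,\hat b]\times[\varepsilon,\hat d]$ for small $\varepsilon>0$. The sign conditions still hold there, by continuity and the strict inequalities established above. For example, $T_1(\varepsilon,d)>0$ uniformly in $d\in[\varepsilon,\hat d]$, because $\Gamma_{1,d}(\varepsilon)$ stays below $1-\delta$, or tends to $-\infty$ as $d\to0$.

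A secondary point is to be explicit that the sign convention for $h_2$ must be the one consistent with $\partial\Gamma_2/\partial d$.
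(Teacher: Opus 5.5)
Your proposal is correct and has the same skeleton as the paper's proof. You reduce to Theorem~\ref{Main_Theorem_NE} through Proposition~\ref{Suff_conditions_ass5}, so the only substantive check is $\Gamma_{1,d}(0)<1=h_1(0)$ and its mirror image for Player 2.

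The difference is in how you get that inequality. The paper rewrites it as $R(d)>0$, with $R(d)=p\int_0^d e^{-u^2/2}\,du+(1-p)\alpha_1-(1-p)\theta_1 d e^{-d^2/2}$, and shows $R(0)>0$ and $R'>0$. You instead bound $(1-p)\theta_1 d\le p\,d\le p\,\psi(d)$ directly from $\psi'\ge 1$. This is shorter and uses the hypothesis $\theta_1\le p/(1-p)$ in exactly the same way.

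You also correctly flag two points the paper glosses over:
\begin{itemize}
\item The literal definition $h_2(z)=\mathcal{G}_2'(-z)-z\mathcal{G}_2(-z)$ gives $-1-z(z-\alpha_2)$. Only your reading $\frac{d}{dz}[\mathcal{G}_2(-z)]-z\mathcal{G}_2(-z)=1-z(z-\alpha_2)$ is consistent with $\partial\Gamma_2/\partial d$ and with the formula the paper actually uses.
\item $\Gamma_1$ and $\Gamma_2$ tend to $-\infty$ at the corner $(0,0)$. So $T$ is not a continuous real-valued map on $[0,\hat b]\times[0,\hat d]$, contrary to what the proof of Theorem~\ref{Main_Theorem_NE} asserts.
\end{itemize}

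Your repair by shrinking the Miranda box is sound. It can also be made explicit without any limiting or compactness argument. Take any $b\in(0,\alpha_1]$ and any $d\ge 0$. By your estimate, the numerator of $\Gamma_1(b,d)$ is at most $p\,\phi(b)\psi(d)$, which is strictly less than the denominator, so $\Gamma_1(b,d)<1\le h_1(b)$. The same holds for Player 2 with $d\in(0,\alpha_2]$. Hence any $\varepsilon\le\min(\alpha_1,\alpha_2)$ works. So does the box $[\alpha_1/2,\hat b]\times[\alpha_2/2,\hat d]$, the analogue of the box the paper itself uses in the $N$-player theorem.
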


\begin{proof}
    This is a direct application of Theorem \ref{Main_Theorem_NE}, where we use Proposition~\ref{Suff_conditions_ass5} to verify that Assumptions~\ref{supermartingale_assumption}, \ref{supermartingale_assumption_2} and \ref{Assumption_NE_general} all hold. 
    
    From the definitions of $\mathcal{G}_1,\mathcal{G}_2,\mathcal{L}_1$ and $\mathcal{L}_2$, it is follows that 
    $h_1(z)=1-z(z-\alpha_1)$ and $h_2(z)=1-z(z-\alpha_2)$. Let $\hat{b}$ and $\hat{d}$ denote the (positive) roots of $h_1$ and $h_2$, respectively. 
    It suffices to show that the conditions $\theta_1 \leq \frac{p}{1-p}$ and $\theta_2 \leq \frac{1-p}{p}$ are sufficient conditions to guarantee that $\Gamma_{1,d}(0)<h_{1}(0)$ for any $d\in[0, \hat{d}]$, and $\Gamma_{2,b}(0)<h_{2}(0)$ for any $b\in[0,\hat{d}]$. Then by Proposition \ref{Suff_conditions_ass5} and Theorem \ref{Main_Theorem_NE}, it follows that there exists a Nash equilibrium. 

    Suppose $\theta_{1}\leq \frac{p}{1-p}$. We then have 
\[
        \Gamma_{1,d}(0) < h_1(0) \Longleftrightarrow  (1-p)\frac{-\alpha_{1}\phi(d)+\theta_{1}d}{p\psi(d)}< 1
            \Longleftrightarrow R(d)>0
\]   
where
\[ R(d)=p\int_{0}^{d}e^{-\frac{u^{2}}{2}}du+(1-p)\alpha_{1}-(1-p)\theta_{1}de^{-\frac{d^{2}}{2}}. \] 
For all $d \in (0,\infty)$ we have  $R'(d)=[p-(1-p)\theta_{1}]e^{-\frac{d^{2}}{2}}+ (1-p)\theta_{1}d^{2}e^{-\frac{d^{2}}{2}}>0$, since we are assuming that $\theta_{1}\leq \frac{p}{1-p}$. Hence, $R$ is increasing on $[0,\infty)$ and since $R(0)=(1-p)\alpha_{1}>0$ we conclude that $R(d)> 0$ for all $d\in[0,\infty)$. 
With similar arguments, we can show that $\theta_{2}\leq \frac{1-p}{p}$ is a sufficient condition for $\Gamma_{2,b}(0)<h_2(0)$.

\end{proof}

\begin{figure}[h]
    \centering
    \begin{subfigure}[t]{0.48\textwidth}
        \vspace{0pt}
        \includegraphics[width=\linewidth]{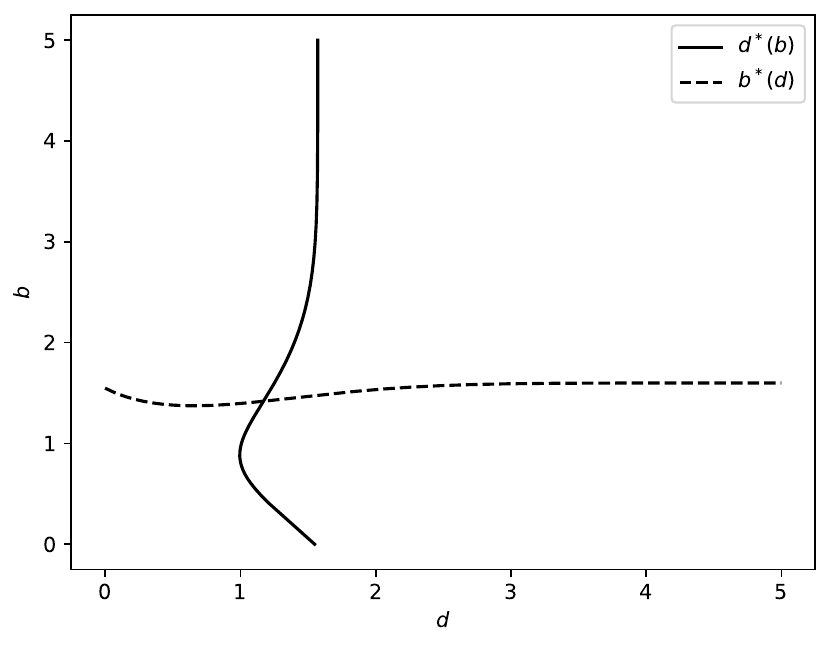}
        \caption{$\theta_1=\theta_2=2$ and $\alpha_1=\alpha_2=1$. The sufficient conditions given in Theorem \ref{AsymmetricTheorem-2player} are satisfied, there is an unique non-zero threshold-type Nash equilibrium (the unique intersection between the two curves, where $b=b^*(1.19)\approx 1.46$ and $d=d^*(1.46)\approx 1.19$).}
        \label{fig:Asymmetric_NE_small_theta}
    \end{subfigure}
    \hfill
    \begin{subfigure}[t]{0.48\textwidth}
        \vspace{0pt}
        \includegraphics[width=\linewidth]{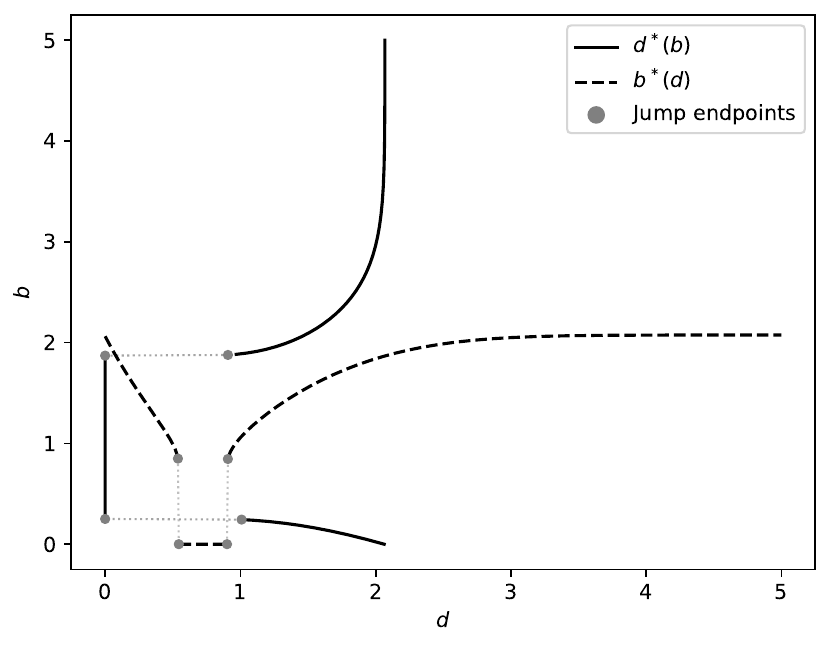}
        \caption{$\theta_1=\theta_2=8$ and $\alpha_1=\alpha_2=1.6$. The sufficient conditions given in Theorem \ref{AsymmetricTheorem-2player} fail, and there is no threshold-type Nash equilibrium.}        \label{fig:Asymmetric_NE_large_theta}
    \end{subfigure}
    \caption{The existence of Nash equilibrium in the asymmetric case where $p=0.7$}
    \label{fig:twoplots}
\end{figure}

\subsection{Uniqueness of the Nash equilibria in symmetric games}
So far we have established sufficient conditions for the existence of a Nash equilibrium. Now we aim to further study the properties of the game's equilibrium structure, and especially the uniqueness of the Nash equilibrium. Uniqueness is more delicate. For this reason we focus on the case of linear payoffs as in Section~\ref{eg:linearexistence}, and the symmetric case (where $p=0.5$, and both players face identical payoffs, i.e. $\sG_1(z)=\sG_2(-z)$ and $\sL_1(z)=\sL_2(-z)$.)
We write 
$\alpha:=\alpha_1=\alpha_2$ and $\theta:=\theta_1=\theta_2$. Note that since $p=0.5$ the skew Brownian bridge is actually a standard Brownian bridge. 

Since the setup is completely symmetric, it is reasonable to look for Nash equilibria in which the strategies for the two players are the same.

\begin{definition}[Symmetric Nash equilibrium]
    We say that $(\tau^{*},\sigma^{*})\in\mathcal{T}^{\mathbb{SQ}}\times \mathcal{T}^{\mathbb{SQ}}$ is a symmetric Nash equilibrium within the class of square-root threshold strategies if 
    \begin{align*}
        R_{1}(\tau^{*}, \sigma^{*})&=\sup_{\tau\in\mathcal{T}^{\mathbb{SQ}}}R_{1}(\tau,\sigma^{*})\\
        R_{2}(\tau^{*}, \sigma^{*})&=\sup_{\sigma\in\mathcal{T}^{\mathbb{SQ}}}R_{2}(\tau^{*},\sigma),
    \end{align*}
    where $\tau^{*}:=\inf\{t\geq 0: X_{t}\geq b^*\sqrt{1-t}\}$, $\sigma^{*}:=\inf\{t\geq 0: X_{t}\leq -d^*\sqrt{1-t}\}$, and the stopping boundaries satisfy $b^*=d^*\in [0,\infty)$.
\end{definition}

In the previous section we established that if Player 2 uses the square-root threshold strategy $\sigma^{d}$, where $\sigma^{d}=\inf\{u: X_{u}\leq -d\sqrt{1-u}\}$ then the optimal response for Player 1 is to also consider a square-root threshold $\tau^{b}$ where $\tau^{b}=\inf\{u:X_{u}\geq b\sqrt{1-u}\}$ and $b$ is the maximizer of $\Gamma_{1,d}$ given in \eqref{Gamma_expression_}. Thus, to establish the existence of a symmetric Nash equilibrium, it is sufficient to show that when $p=\frac{1}{2}$, $\alpha_1=\alpha_2$ and $\theta_1=\theta_2$ there exists $d\in[0,\infty)$, such that 
the maximizer $b^*(d)$ of the function $\Gamma_{d}$ satisfies $b^{*}(d)=d$. 
Here, since the problems facing the two players are the same, we drop the subscript relating to the player and use the explicit forms of $\mathcal{G}_\cdot$ and $\mathcal{L}_\cdot$. Then we can re-write \eqref{Gamma_expression_} as 
\begin{equation}\label{ExpressionGammap0.5}
  \Gamma_{d}(b)=\frac{(b-\alpha)\phi(d)+\phi(b)\theta d}{\phi(b)\psi(d)+\psi(b)\phi(d)}.
\end{equation}
We also find
\begin{equation}\label{ExpressionGamma_derivative_p0.5}
  \Gamma_{d}'(b)= \frac{\phi(d)}{\phi(b)\psi(d)+\psi(b)\phi(d)}[1-b(b-\alpha)-\Gamma_{d}(b)].
\end{equation}

Define $h_\alpha:[0,\infty) \to \RR$ by $h_{\alpha}(z)=1-z(z-\alpha)$. Note that $h_\alpha$ has a maximum at $\hat{z}(\alpha)= \frac{\alpha}{2}$ and a unique root $\Bar{z}= \bar{z}(\alpha) = \frac{\alpha}{2} + \sqrt{1 + \frac{\alpha^2}{4}}$ such that $\bar{z}>\alpha$. Define $\Theta^*,\Theta_*:(0,\infty)\times(0,\infty) \to (0,\infty)$  by
\begin{eqnarray*} 
\Theta^*(\alpha,d) & = &\frac{1}{d \phi(\frac{\alpha}{2})} \left[ \frac{\alpha}{2} \phi(d) + \left( 1 + \frac{\alpha^2}{4} \right) \psi \left(\frac{\alpha}{2} \right) \phi(d) + \phi \left( \frac{\alpha}{2} \right) \psi(d) \right]. \\ 
\Theta_*(\alpha,d) & = & \frac{\psi(d) + \alpha \phi(d)}{d}
\end{eqnarray*}
and extend the definitions (continuously) to $(0,\infty) \times [0,\infty)$ by setting $\Theta_*(\alpha,0)=\Theta^*(\alpha,0)=\infty$.
It can be shown that $2x \phi(x) < x + (1+x^2)\psi(x)$ for $x>0$ and hence that $\Theta_*(\alpha,d) < \Theta^*(\alpha,d)$ for $\alpha,d>0$.

\begin{prop}\label{p0.5_maximiser_prop}

    Fix $d>0$.
    Recall the definition of $\Gamma_d$ in \eqref{ExpressionGammap0.5} and define $\Upsilon = \Upsilon_d = \Upsilon_{\alpha,\theta,d}:[0,\infty) \mapsto \mathbb{R}$ by $\Upsilon(z) = \Gamma_d(z) - h_\alpha(z)$.
 Then $\tilde{z}$ is a turning point of $\Gamma_d$ if and only if $\tilde{z}$ is a root of $\Upsilon$.
  
The function $\Gamma_{d}$ admits a maximizer, denoted $z^{*}=z^{*}(\alpha,\theta,d)$. 
Furthermore, there exists $\hat{\Theta}:(0,\infty) \times (0,\infty)$ with $\Theta_*(\alpha,d) \leq \hat{\Theta}(\alpha,d)\leq \Theta^*(\alpha,d)$ such that
\begin{enumerate}
\item If $\theta < \Theta_*(\alpha,d)$ then
$\Upsilon$ has a unique root $z_1 \in (\frac{\alpha}{2}, \bar{z}(\alpha))$, and that root is the maximiser of $\Gamma_d$.

\item If $\theta = \Theta_*(\alpha,d)$ then 
$\Upsilon$ has two roots $z_1=0$ and $z_2 \in (\frac{\alpha}{2}, \bar{z}(\alpha))$, and $z_2$ is the maximiser of $\Gamma_d$.

\item If $\Theta_*(\alpha,d) < \theta < \hat{\Theta}(\alpha,d)$ then $\Upsilon$ has two roots $z_1 \in (0, \frac{\alpha}{2})$ and $z_2 \in (\frac{\alpha}{2}, \bar{z}(\alpha))$ and  $z_2$ is the maximiser of $\Gamma_d$.

\item If $\theta= \hat{\Theta}(\alpha,d)$ then $\Upsilon$ has two roots $z_{1}\in (0,\frac{\alpha}{2})$ and $z_2 \in (\frac{\alpha}{2}, \bar{z}(\alpha))$. Both $0$ and $z_{2}$ are maximisers of $\Gamma_d$.

\item If $\hat{\Theta}(\alpha,d) < \theta < \Theta^*(\alpha,d)$ then $\Upsilon$ has two roots $z_{1}\in (0,\frac{\alpha}{2})$ and $z_2 \in (\frac{\alpha}{2}, \bar{z}(\alpha))$. 0 is the maximiser of $\Gamma_d$.

\item If $\theta = \Theta^*(\alpha,d)$ then $\frac{\alpha}{2}$ is the unique root of $\Upsilon$ and $0$ is the maximiser of $\Gamma_d$.

\item If $\theta > \Theta^*(\alpha,d)$ then $\Upsilon$  has no roots and $0$ is the maximiser of $\Gamma_d$. 
\end{enumerate}     
\end{prop}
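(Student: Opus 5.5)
The whole argument rests on the derivative identity \eqref{ExpressionGamma_derivative_p0.5}. It says $\Gamma_d'(b)=-C_d(b)\Upsilon(b)$ with
\[
C_d(b)=\frac{\phi(d)}{\phi(b)\psi(d)+\psi(b)\phi(d)}>0 .
\]
So turning points of $\Gamma_d$ are exactly the roots of $\Upsilon$, which is the first claim, and $\Gamma_d$ increases where $\Upsilon<0$ and decreases where $\Upsilon>0$. As in the proof of Proposition~\ref{Suff_conditions_ass5}, at any root $z$ of $\Upsilon$ we have $\Upsilon'(z)=-h_\alpha'(z)=2z-\alpha$. Hence roots in $(0,\frac{\alpha}{2})$ are strict downcrossings and roots in $(\frac{\alpha}{2},\infty)$ are strict upcrossings. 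Consequently $\Upsilon$ has at most one root in each of these two intervals, plus possibly a tangential root at $\frac{\alpha}{2}$.

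For the behaviour at the ends I will use two facts.
\begin{itemize}
\item At the right: $\Gamma_d(\bar z)>0$, because $\bar z>\alpha$ makes the numerator of \eqref{ExpressionGammap0.5} positive. Since $h_\alpha\le 0$ on $[\bar z,\infty)$ and $\Gamma_d$ is positive beyond its last turning point, we get $\Upsilon>0$ on $[\bar z,\infty)$. So every root lies in $[0,\bar z)$, and $\Gamma_d$ is decreasing on $[\bar z,\infty)$.
\item At the left: $\Upsilon(0)=\frac{\theta d-\alpha\phi(d)}{\psi(d)}-1$, which is $<0$, $=0$ or $>0$ exactly as $\theta<$, $=$ or $>\Theta_*(\alpha,d)$.
\end{itemize}
Since $\Gamma_d$ is continuous on the compact set $[0,\bar z]$ and decreasing afterwards, a maximiser exists. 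It is either $0$ or an interior upcrossing root of $\Upsilon$.

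The dependence on $\theta$ is the other key ingredient. $\Gamma_d(z)$ is affine and strictly increasing in $\theta$, with coefficient $\frac{\phi(z)d}{\phi(z)\psi(d)+\psi(z)\phi(d)}>0$, so $\Upsilon(z)$ is strictly increasing in $\theta$ for each fixed $z$. Solving $\Gamma_d(\frac{\alpha}{2})=h_\alpha(\frac{\alpha}{2})=1+\frac{\alpha^2}{4}$ for $\theta$ reproduces the formula for $\Theta^*(\alpha,d)$. Hence $\Upsilon(\frac{\alpha}{2})<0$, $=0$ or $>0$ as $\theta<$, $=$ or $>\Theta^*$. The cases then follow.
\begin{itemize}
\item Cases 1 and 2 ($\theta\le\Theta_*$): here $\Upsilon(0)\le 0$. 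The first positive root must be an upcrossing, so it lies beyond $\frac{\alpha}{2}$, and by the crossing structure it is the unique positive root $z_2\in(\frac{\alpha}{2},\bar z)$. $\Gamma_d$ increases up to $z_2$ and decreases afterwards, so $z_2$ is the maximiser; when $\theta=\Theta_*$ the point $0$ is an additional root.
\item Cases 3 to 5 ($\Theta_*<\theta<\Theta^*$): now $\Upsilon(0)>0>\Upsilon(\frac{\alpha}{2})$ and $\Upsilon(\bar z)>0$. The intermediate value theorem together with the crossing structure gives exactly one root $z_1\in(0,\frac{\alpha}{2})$ and exactly one root $z_2\in(\frac{\alpha}{2},\bar z)$. $\Gamma_d$ decreases on $(0,z_1)$, increases on $(z_1,z_2)$ and decreases after $z_2$, so the maximiser is $0$ or $z_2$.
\item Cases 6 and 7 ($\theta\ge\Theta^*$): suppose there were a root other than $\frac{\alpha}{2}$. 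Then there would be a pair of roots $z_1<\frac{\alpha}{2}<z_2$ with $\Upsilon<0$ between them, forcing $\Upsilon(\frac{\alpha}{2})<0$, a contradiction. So there is no root when $\theta>\Theta^*$, and only the tangential root $\frac{\alpha}{2}$ when $\theta=\Theta^*$. In both cases $\Gamma_d$ is non-increasing, so $0$ is the maximiser.
\end{itemize}

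The main obstacle is defining $\hat\Theta$, which decides between $0$ and $z_2$ on $(\Theta_*,\Theta^*)$. Set $\Delta(\theta)=\Gamma_d(z_2(\theta))-\Gamma_d(0)$. Since $\Upsilon'(z_2)=2z_2-\alpha>0$, the implicit function theorem makes $z_2$ a $C^1$ function of $\theta$. Because $\Gamma_d'(z_2)=0$, the envelope argument gives
\[
\Delta'(\theta)=\frac{\phi(z_2)d}{\phi(z_2)\psi(d)+\psi(z_2)\phi(d)}-\frac{d}{\psi(d)} .
\]
This is negative because $z\mapsto \frac{\phi(z)}{\phi(z)\psi(d)+\psi(z)\phi(d)}$ is strictly decreasing, which uses that $\psi/\phi$ is increasing. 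The limiting values are:
\begin{itemize}
\item as $\theta\downarrow\Theta_*$, $z_1\to0$ and $\Gamma_d$ increases on $(0,z_2)$, so $\Delta>0$;
\item as $\theta\uparrow\Theta^*$, $z_1,z_2\to\frac{\alpha}{2}$, so $\Gamma_d$ is non-increasing in the limit and $\Delta<0$.
\end{itemize}
Continuity and strict monotonicity of $\Delta$ therefore give a unique zero $\hat\Theta\in(\Theta_*,\Theta^*)$, which yields Cases 3, 4 and 5. The delicate points, which I would verify carefully, are these endpoint limits, in particular that $z_2(\theta)$ stays bounded away from the boundary so that the limits are justified.
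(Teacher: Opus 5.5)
Your route is the paper's: the identity $\Gamma_d'=-C_d\Upsilon$, the fact that $\Upsilon'(z)=2z-\alpha$ at every root, the signs of $\Upsilon(0)$ and $\Upsilon(\frac{\alpha}{2})$ as the switches at $\Theta_*$ and $\Theta^*$, and the envelope computation showing that $\Gamma_d(z_2)-\Gamma_d(0)$ is strictly monotone in $\theta$ (your $\Delta$ is the paper's $-K$). One step does not go through as written: the boundary case $\theta=\Theta^*$ (Case 6). You argue that a root $z'\neq\frac{\alpha}{2}$ must come with a partner root on the other side of $\frac{\alpha}{2}$, with $\Upsilon<0$ in between, forcing $\Upsilon(\frac{\alpha}{2})<0$. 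That works when $\Upsilon(\frac{\alpha}{2})>0$. When $\Upsilon(\frac{\alpha}{2})=0$, however, the partner can be $\frac{\alpha}{2}$ itself. One configuration is a strict downcrossing at $z'\in(0,\frac{\alpha}{2})$, then $\Upsilon<0$ on $(z',\frac{\alpha}{2})$, then a return to zero at $\frac{\alpha}{2}$. The other is $\Upsilon<0$ on $(\frac{\alpha}{2},z')$ before an upcrossing at $z'$. Since $\Upsilon'(\frac{\alpha}{2})=0$, your first-order crossing rule says nothing at $\frac{\alpha}{2}$, so neither configuration is excluded.

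You need second-order information at $\frac{\alpha}{2}$, and the paper supplies it. Differentiating $\Gamma_d'=-C_d\Upsilon$ gives $\Gamma_d''(\frac{\alpha}{2})=0$ when $\Upsilon(\frac{\alpha}{2})=\Upsilon'(\frac{\alpha}{2})=0$. Hence $\Upsilon''(\frac{\alpha}{2})=-h_\alpha''(\frac{\alpha}{2})=2>0$, so $\Upsilon>0$ on a punctured neighbourhood of $\frac{\alpha}{2}$, which rules out both configurations. You could also close it with your own tools: $\Upsilon_\theta>0$ for every $\theta>\Theta^*$ and $\Upsilon$ is affine in $\theta$, so $\Upsilon_{\Theta^*}\geq 0$. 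Every root is then an interior minimum, so $\Upsilon'=0$ there, and $\Upsilon'=2z-\alpha$ forces $z=\frac{\alpha}{2}$.

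This is not cosmetic. Your limit $z_2(\theta)\to\frac{\alpha}{2}$ as $\theta\uparrow\Theta^*$ depends on $\frac{\alpha}{2}$ being the only root. So does the \emph{strict} inequality $\Gamma_d(\frac{\alpha}{2})-\Gamma_d(0)<0$ at $\Theta^*$, which needs $\Upsilon>0$ on $[0,\frac{\alpha}{2})$. Without strictness, "non-increasing in the limit" only gives $\lim\Delta\le 0$, which does not exclude $\hat\Theta=\Theta^*$.

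Two minor points. First, $\Upsilon>0$ on $[\bar z,\infty)$ follows directly from positivity of the numerator $(z-\alpha)\phi(d)+\theta d\,\phi(z)$ for all $z\geq\alpha$; you do not need the appeal to the last turning point. Second, solving $\Gamma_d(\frac{\alpha}{2})=1+\frac{\alpha^2}{4}$ makes the last bracketed term of $\Theta^*$ equal to $(1+\frac{\alpha^2}{4})\phi(\frac{\alpha}{2})\psi(d)$. It therefore does not literally reproduce the displayed formula, but your characterization of $\Theta^*$ through the sign of $\Upsilon(\frac{\alpha}{2})$ is the one the argument actually uses.
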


\begin{proof}
    Recall \eqref{ExpressionGammap0.5} and \eqref{ExpressionGamma_derivative_p0.5} and that $1-b(b-\alpha)-\Gamma_{d}(z)=h_{\alpha}(z)-\Gamma_{d}(z)=-\Upsilon(z)$.
 Then, for any $\Tilde{z}$ which satisfies
\begin{equation}\label{TurningPointCondition}
    h_{\alpha}(\Tilde{z})=\Gamma_{d}(\Tilde{z})
\end{equation}
we know that $\Tilde{z}$ is a turning point of $\Gamma_{d}$ (and vice versa).
See Figure~\ref{Gamma_func_d}.
Further, $\Gamma_{d}$ is strictly increasing if $\Gamma_d<h_\alpha$ and $\Gamma_{d}$ is strictly decreasing if $\Gamma_d>h_\alpha$ and $\Gamma_d$ is positive on $[\alpha,\infty)$.
It follows that $\lim_{z \rightarrow \infty} \Upsilon(z) =  \infty$.

If $\Gamma_d(0)<h(0)=1$ then $\Gamma_d$ is increasing until the first root $z_1$ of $\Upsilon$. By the Intermediate Value Theorem such a root must exist since $\Upsilon(0)<0$. Note that $0 \leq \Upsilon'(z_1) = -h_\alpha'(z_1)$ so that we cannot have $z_1 < \frac{\alpha}{2}$.
Further, if $z_1 \leq \alpha$ then $h(z_1)>0$ and if $z_1>\alpha$ then $\Gamma_d(z_1)>0$ so that in all cases $h(z_1)=\Gamma_d(z_1)>0$.
Finally, if $z_2>z_1> \frac{\alpha}{2}$ is a second root of $\Upsilon$ then $\Upsilon>0$ on $(z_1,z_2)$ but $\Upsilon'(z_2) = -h_\alpha'(z_2)>0$ a contradiction. Hence, if 
$\Gamma_d(0)<1$ we have that $\Upsilon$ has a unique root. This root is a (global) maximum of $\Gamma_d$. Moreover, $\Gamma'_d>0$ to the left of $z_1$ and $\Gamma'_d<0$ to the right of $z_1$.
Finally note that $\Gamma_d(0)<1$ is equivalent to $\theta d - \alpha \phi(d) < \psi(d)$ or equivalently $\theta < \frac{\psi(d) + \alpha \phi(d)}{d} = \Theta_*(\alpha,d)$.

Now suppose $\Gamma_d(0)=1$ or equivalently $\theta = \frac{\psi(d) + \alpha \phi(d)}{d}= \Theta_*(\alpha,d)$.
Then $\Upsilon(0)=0$, but $\Upsilon'(0)= -h_\alpha'(0)<0$ so that $\Upsilon <0$ on a neighbourhood to the right of 0. Then, by similar arguments to the case $\Gamma_d(0)<1$ we find that $z_1=0$ and the unique positive root $z_2$ of $\Upsilon$ are turning points of $\Gamma_d$. Moreover $z_2$ is the maximizer.

Now suppose that $\Gamma_d(\frac{\alpha}{2}) > h_\alpha(\frac{\alpha}{2})$. Then working to the left of $\frac{\alpha}{2}$ we find that $\Upsilon>0$, and $\Gamma_d$ is decreasing and $h_\alpha$ is increasing on $[0,\frac{\alpha}{2})$, and there cannot be a root of $\Upsilon$ on $[0, \frac{\alpha}{2})$. Nor can $\Upsilon$ downcross 0 on $[\frac{\alpha}{2},\bar{z})$---here we use a similar argument to the case $\Gamma_d(0)=1$ to show that at any downcrossing $\Upsilon'>0$, a contradiction, but now on $[\frac{\alpha}{2},\bar{z})$ rather than 
$[z_1,\bar{z})$.
Since $\Upsilon >0$ we find $\Gamma_d$ is decreasing everywhere, and the maximum is at zero. The condition $\Gamma_d(\frac{\alpha}{2}) > h_\alpha(\frac{\alpha}{2})$ is equivalent to
$\theta > \Theta^*(\alpha,d)$. 

If $\Gamma_d(\frac{\alpha}{2}) = h_\alpha(\frac{\alpha}{2})$ then $\frac{\alpha}{2}$ is the unique root of $\Upsilon$. This follows since $\Gamma_d''(\frac{\alpha}{2})= 0>-2=h_\alpha''(\frac{\alpha}{2})$ so that $\Upsilon''(\frac{\alpha}{2})>0$ and $\Upsilon$ cannot hit zero either to the left or right of $\frac{\alpha}{2}$. Again, $\Gamma_d$ is decreasing and 0 is the maximiser.

Now suppose that $1 <\Gamma_d(0)$ and $\Gamma_d(\frac{\alpha}{2}) < h_\alpha(\frac{\alpha}{2})$. Then $\Upsilon(0)>0$ and $\Upsilon(\frac{\alpha}{2})<0$ and it follows that $\Upsilon$ has two roots $z_1,z_2$ with $0 < z_1< \frac{\alpha}{2} < z_2 < \bar{z}(\alpha)$. By similar arguments to the case $\Gamma_d(0) = h_\alpha(0)$ we see both that $z_1$ is the unique root of $\Upsilon$ in $(0,\frac{\alpha}{2})$ and that $z_2$ is the unique root of $\Upsilon$ in $(\frac{\alpha}{2},\bar{z})$. It follows that $\Gamma_d$ is decreasing below $z_1$ and above $z_2$ and increasing between $z_1$ and $z_2$ and thus $\Gamma_d$ attains it maximum either at zero or at $z_2$. Then $z_2$ is the maximiser if and only if $h(z_2) = \Gamma_d(z_2) \geq \Gamma_d(0)$ and 0 is the maximiser if and only if $h_\alpha(z_2) =  \Gamma_d(z_2) \leq \Gamma_d(0)$.

Let $K(\theta) = \Gamma_{d,\alpha,\theta}(0) - \Gamma_{d,\alpha,\theta}(z_2(d,\alpha,\theta))$.
Then $K(\Theta_*(\alpha,d))<0 <K(\Theta^*(\alpha,d))$. 
Then, since $\frac{\partial}{\partial z} \Gamma_{d,\alpha,\theta}|_{z=z_2} = 0$
\begin{eqnarray*} 
\frac{dK}{d \theta} & = & \frac{\partial }{\partial \theta} \Gamma_{d,\alpha,\theta}(0)
+ \frac{\partial }{\partial \theta} \Gamma_{d,\alpha,\theta}(z_2(\theta)) + \frac{\partial z_2}{\partial \theta} \frac{\partial}{\partial z_2} \Gamma_{d,\alpha,\theta}(z_2(\theta)) \\
& = &
\frac{d}{\psi(d)} - \frac{d \phi(z_2)}{\phi(z_2) \psi(d) + \psi(z_2) \phi(z)} > 0.
\end{eqnarray*}
It follows that $K$ is increasing in $\theta$ then we have that $K$ has a unique root $\hat{\Theta} \in (\Theta_*,\Theta^*)$. Moreover, it then follows that
$\Hat{\Theta}$ solves the implicit equation
\[ \hat{\Theta}(\alpha,d) = \frac{z_2 \psi(d)}{d \psi(z_2)} + \frac{\alpha}{d} \left( \phi(d) + \frac{(\phi(z_2)-1)}{\psi(z_2)} \psi(d) \right) 
\]
where $z_2 = z_2(\alpha,d,\theta)$.
\end{proof}

\begin{figure}[ht]
    \centering
    \includegraphics[width=0.85\linewidth]{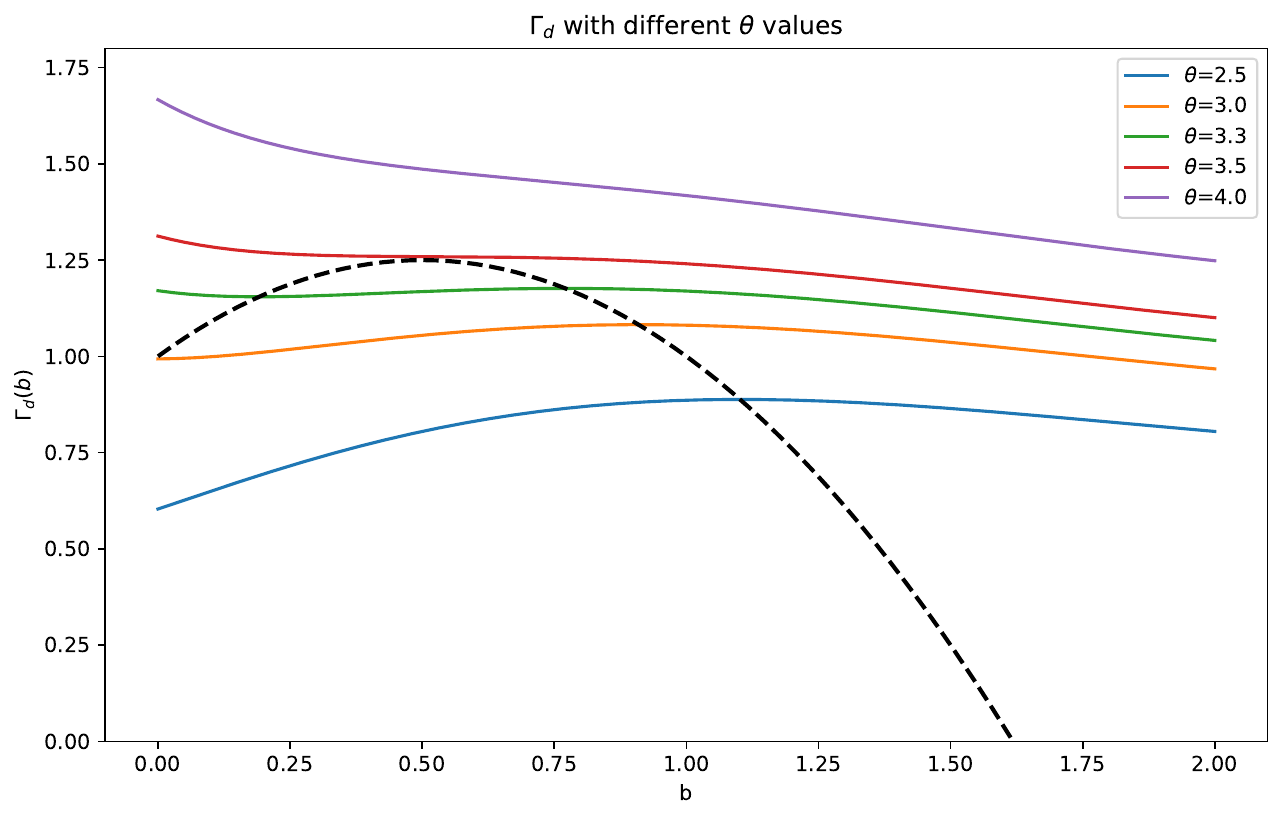}
    \caption{Examples of $\Gamma_{d}(\cdot) =\Gamma_{d,\theta}(\cdot)$ as $\theta$ varies. The curves $\Gamma_{d,\theta}$ are decreasing in $\theta$. Also shown with a dashed line is $h_{\alpha}$. The turning points of $\Gamma_d$ correspond to crossing points of $h_{\alpha}$ by $\Gamma_{d,\theta}$.}
    \label{Gamma_func_d}
\end{figure}

\begin{lemma}
\label{p0.5_maximiser_d=0}

Suppose $d=0$. Then $\Gamma_0$ has a unique maximiser at $z^*(0)$ where $z^*(0)$ is the unique solution in $(0,\infty)$ to $\psi(z) = \frac{z-\alpha}{1 - z(z-\alpha)}$. 
We have $z^*(0) \in (\alpha,\bar{z}) $ 
\end{lemma}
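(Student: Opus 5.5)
With $d=0$ we have $\phi(0)=1$ and $\psi(0)=0$, so \eqref{ExpressionGammap0.5} reduces to
\[
\Gamma_0(b)=\frac{b-\alpha}{\psi(b)}, \qquad b>0 .
\]
This agrees with $B_+(b,0)$ in \eqref{ExpressionAB_0}, since $\sL_1(0)=0$. Setting $d=0$ in \eqref{ExpressionGamma_derivative_p0.5}, the same computation as before gives
\[
\Gamma_0'(b)=\frac{1}{\psi(b)}\bigl[h_\alpha(b)-\Gamma_0(b)\bigr].
\]
One can also check this directly from $\psi'=b\psi+1$. Turning points of $\Gamma_0$ on $(0,\infty)$ are therefore exactly the roots of $\Upsilon_0=\Gamma_0-h_\alpha$. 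The equation $\Gamma_0(z)=h_\alpha(z)$ reads $(z-\alpha)/\psi(z)=1-z(z-\alpha)$, which rearranges to $\psi(z)=\frac{z-\alpha}{1-z(z-\alpha)}$ whenever $h_\alpha(z)\neq 0$.

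\textbf{Behaviour at the ends.} As $b\downarrow 0$ we have $\psi(b)\sim b$, so $\Gamma_0(b)\sim -\alpha/b\to-\infty$ and hence $\Upsilon_0(0+)=-\infty$. For $b>\alpha$ we have $\Gamma_0>0$ while $h_\alpha\to-\infty$, so $\Upsilon_0\to+\infty$. By the intermediate value theorem $\Upsilon_0$ has a root, so $\Gamma_0$ has a turning point.

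\textbf{Locating the roots.} On $(0,\alpha]$ we have $\Gamma_0\le 0<h_\alpha$, because $h_\alpha\ge 1$ on $[0,\alpha]$. So $\Upsilon_0<0$ there and no root lies in $(0,\alpha]$. On $[\bar z,\infty)$ we have $h_\alpha\le 0<\Gamma_0$, so $\Upsilon_0>0$ and again no root. All roots therefore lie in $(\alpha,\bar z)$, and there $h_\alpha>0$, so the rearrangement is legitimate.

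\textbf{Uniqueness.} I use the crossing argument from the proof of Proposition~\ref{p0.5_maximiser_prop}. At any root $z$ of $\Upsilon_0$ we have $\Gamma_0'(z)=0$, so $\Upsilon_0'(z)=-h_\alpha'(z)=2z-\alpha>0$, since $z>\alpha>\alpha/2$. Every root is thus a strict upcrossing. A continuous function all of whose zeros are strict upcrossings has at most one zero, so the root $z^*(0)\in(\alpha,\bar z)$ is unique.

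\textbf{Conclusion.} $\Upsilon_0<0$ on $(0,z^*(0))$ and $\Upsilon_0>0$ on $(z^*(0),\infty)$. Since $\psi>0$ on $(0,\infty)$, $\Gamma_0$ is strictly increasing before $z^*(0)$ and strictly decreasing after it. Together with $\Gamma_0(0+)=-\infty$, this makes $z^*(0)$ the unique maximiser.

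The root equation is equivalent to $\psi(z)=\frac{z-\alpha}{1-z(z-\alpha)}$ on $(\alpha,\bar z)$. Any solution of that equation in $(0,\infty)$ must lie in $(\alpha,\bar z)$: on $(0,\alpha]$ the right-hand side is nonpositive while $\psi>0$, and for $z\ge\bar z$ the right-hand side is negative or undefined. So it is the unique such solution.

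The main point needing care is the behaviour near $0$ and the convention at $b=0$. Formally $\Gamma_0(0)$ is undefined, and $-\infty$ is the natural limiting value, consistent with stopping immediately at $0$ being bad for Player 1. No step appears genuinely hard.
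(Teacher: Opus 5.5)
Your proof is correct. The localisation of roots to $(\alpha,\bar z)$ matches the paper, but you reach uniqueness by a different mechanism.

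The paper works with $F(z)=(1-z(z-\alpha))\psi(z)-(z-\alpha)$, which equals $\psi(z)^2\,\Gamma_0'(z)=-\psi(z)\Upsilon_0(z)$. It differentiates directly, obtaining $F'(z)=-(z-\alpha)\bigl(z+(z^2+1)\psi(z)\bigr)$; the printed formula has $1$ where the first $z$ should be, which does not affect the sign. So $F$ is strictly decreasing on $(\alpha,\infty)$. It falls from $F(\alpha)=\psi(\alpha)>0$ to $F(\bar z)=-(\bar z-\alpha)<0$, and hence has exactly one zero.

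You instead reuse the crossing argument of Proposition~\ref{p0.5_maximiser_prop}. Because $\Gamma_0'$ vanishes at every root of $\Upsilon_0$, you get $\Upsilon_0'=-h_\alpha'=2z-\alpha>0$ there. Every root is therefore an upcrossing, so there is at most one.

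Your route avoids computing $F'$ altogether and needs only the sign of $h_\alpha'$ at candidate roots, plus the elementary fact that a function whose zeros all have positive derivative has at most one zero. The paper's route costs one derivative computation but yields a stronger global statement: $F$ is monotone on all of $(\alpha,\infty)$.

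You also make explicit that the unique turning point is the maximiser, using the sign of $\Gamma_0'$ on either side and $\Gamma_0(0+)=-\infty$. The paper leaves this implicit, although it follows at once from the sign pattern of $F$.
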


\begin{proof}
The result follows from the fact that $\Gamma_0(z) = \frac{(z-\alpha)}{\psi(z)}$ and that $\Gamma_0$ has a turning point at any $z$ for which $\psi(z) = \frac{(z-\alpha)}{1 - z(z-\alpha)}$. Since $\psi>0$ on $(0,\infty)$ and $\frac{z-\alpha}{1-z(z-\alpha)}>0$ on $(\alpha, \bar{z}(\alpha))$ only (on $(0,\infty)$ at least, and root to $\psi(z) = \frac{(z-\alpha)}{1 - z(z-\alpha)}$ must lie in $(\alpha, \bar{z}(\alpha))$.
Moreover, $(1-z(z-\alpha))\psi(z) - (z-\alpha)$ has derivative $-(z - \alpha)(1 + \psi(z)(z^2+1))$ and hence $(1-z(z-\alpha))\psi(z) - (z-\alpha)$ is strictly decreasing from $\psi(\alpha)$ at $\alpha$ to $-(\bar{z}(\alpha)-\alpha)$ at $\bar{z}$. It follows that there is exactly one solution to $\psi(z) = \frac{(z-\alpha)}{1 - z(z-\alpha)}$ in $(0,\infty)$.
\end{proof}

\begin{theorem}
\label{thm:symmetric2player}
Suppose that $p=0.5$, $\mathcal{G}_{1}$ and $\mathcal{L}_{1}$ are given by $\mathcal{G}_{1}(z)=z - \alpha$ and $\mathcal{L}_{1}(z)=  \theta z$, and that $\mathcal{G}_{2}$ and $\mathcal{L}_{2}$ are given by $\mathcal{G}_{2}(z)=-z - \alpha$ and $\mathcal{L}_{2}(z)=  -\theta z$.
 
The game admits at most three symmetric threshold-type Nash equilibria.

If $\theta \geq 1 + \max \{ 2 \alpha \left[ \alpha + (\alpha^2 + 1)\psi(\alpha) \right], \frac{1}{\alpha}(4+\alpha^2)\psi(\frac{\alpha}{2}) \}$ then there are no symmetric threshold-type Nash equilibria.

If $\theta \leq 1$ then there is a unique symmetric threshold-type Nash equilibria.

There are cases with more than one symmetric threshold-type Nash equilibria.
\end{theorem}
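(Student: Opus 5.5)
By the discussion preceding the definition of symmetric equilibria, a symmetric threshold-type Nash equilibrium corresponds exactly to a fixed point $d\ge 0$ of the best-response map $d\mapsto z^*(d)$, where $z^*(d)$ maximises $\Gamma_d$ (for $d=0$ use Lemma~\ref{p0.5_maximiser_d=0}). Proposition~\ref{p0.5_maximiser_prop} shows that the maximiser is either $0$ or the interior root $z_2(d)\in(\frac{\alpha}{2},\bar z(\alpha))$ of $\Upsilon_d$. The plan is therefore to handle the two kinds of fixed points separately.

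\textbf{Fixed point at $d=0$.} This is impossible, because $z^*(0)\in(\alpha,\bar z)$ is strictly positive.

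\textbf{Interior fixed points $d=b>0$.} Here $b$ must satisfy $\Upsilon_b(b)=0$, that is, $\Gamma_b(b)=h_\alpha(b)$. With $d=b$ the denominator in \eqref{ExpressionGammap0.5} is $2\phi(b)\psi(b)$, so $\Gamma_b(b)=\frac{b-\alpha+\theta b}{2\psi(b)}$. The fixed-point equation becomes
\[
F(b):=2\psi(b)\bigl(1-b(b-\alpha)\bigr)-(1+\theta)b+\alpha=0,\qquad b\in\bigl(\tfrac{\alpha}{2},\bar z(\alpha)\bigr),
\]
together with the requirement that $b$ is the global maximiser of $\Gamma_b$, i.e.\ $\theta\le\hat\Theta(\alpha,b)$.

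\textbf{At most three equilibria.} I would show that $F$ has at most three zeros. Differentiating with $\psi'=z\psi+1$ gives
\[
F'(b)=2(b\psi+1)(1-b^2+\alpha b)+2\psi(\alpha-2b)-(1+\theta).
\]
The aim is to show that $F''$ has at most one sign change on the relevant interval, or alternatively to divide by $\psi$ and argue convexity/concavity of a suitable quotient. Rolle's theorem would then bound the number of zeros by three. I expect this counting to be the main obstacle. If the direct computation becomes messy, I would instead work with $F(b)/(1-b(b-\alpha))$ on $(\frac{\alpha}{2},\bar z)$, which rewrites the equation as $2\psi(b)=\frac{(1+\theta)b-\alpha}{1-b(b-\alpha)}$, and compare log-convexity of the two sides.

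\textbf{$\theta\le1$: unique equilibrium.} For $\theta\le1$ I would check $\theta<\Theta_*(\alpha,d)$ for every $d$. This holds because $\psi(d)\ge d$, so $\Theta_*\ge 1+\alpha\phi(d)/d>1$. Case~1 of Proposition~\ref{p0.5_maximiser_prop} then applies for every $d$, so the best response is the unique root $z_1(d)$. Existence follows from the intermediate value theorem: $F(\frac{\alpha}{2})>0$ (using $\psi\ge0$ and $h_\alpha(\frac{\alpha}{2})>0$), while $F(\bar z)=\alpha-(1+\theta)\bar z<0$ since $\bar z>\alpha$. For uniqueness I would show $F'<0$ on $(\frac{\alpha}{2},\bar z)$ when $\theta\le1$. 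On this interval $\alpha-2b<0$ and $1-b^2+\alpha b\le 1+\frac{\alpha^2}{4}-(b-\frac{\alpha}{2})^2$, and the bound is to be obtained from these using $1+\theta\ge\ldots$. If a direct bound fails, I would appeal to the uniqueness argument of Proposition~\ref{Suff_conditions_ass5} along the diagonal.

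\textbf{Large $\theta$: no equilibrium.} For large $\theta$ I would show that every candidate $b\in(\frac{\alpha}{2},\bar z)$ fails, in one of two ways. Either $F(b)\neq0$, because $(1+\theta)b-\alpha$ exceeds $2\psi(b)h_\alpha(b)\le 2\psi(\bar z)\max h_\alpha$. Or $\theta>\Theta^*(\alpha,b)\ge\hat\Theta$, so that $0$ is the best response. The two terms in the stated max appear to come from bounding $\Theta^*(\alpha,b)$ and the root condition on the subranges $b\le\alpha$ and $b\ge\alpha$. I would verify each bound by monotonicity of $\Theta^*(\alpha,\cdot)$ and of $\psi$.

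\textbf{Multiplicity.} I would exhibit parameters with more than one equilibrium: choose $\theta$ slightly above $1$ and $\alpha$ so that $F$ has three sign changes while $\theta<\hat\Theta$ at those roots. This can be checked numerically, or by a perturbation near a degenerate triple root.
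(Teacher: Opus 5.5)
Your reduction is sound: ruling out $d=0$, and identifying interior equilibria with zeros of $F$ (the paper's $H_{\alpha,\theta}$) that maximise $\Gamma_b$. Using $\psi(d)\ge d$ to get $\Theta_*(\alpha,d)>1\ge\theta$, so that Case~1 of Proposition~\ref{p0.5_maximiser_prop} applies for every $d$, is a neat way to settle the maximiser question when $\theta\le 1$.

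\textbf{Uniqueness for $\theta\le1$.} The step you propose fails: $F'<0$ is false on $(\frac{\alpha}{2},\bar z)$. Your expression for $F'$ simplifies to
\[
F'(b)=2(\alpha-b)T(b)+1-\theta,\qquad T(b)=(b^2+1)\psi(b)+b>0,
\]
so for $\theta\le1$ one has $F'>0$ on $(\frac{\alpha}{2},\alpha)$. The fallback to Proposition~\ref{Suff_conditions_ass5} does not transfer either. That argument uses $\Gamma_{1,d}'=0$ at roots for \emph{fixed} $d$, while the diagonal map $z\mapsto\Gamma_z(z)$ is not stationary at its crossings of $h_\alpha$. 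What works is the following. $F'>0$ on $(0,\alpha)$, and $F''=2[(\alpha-b)T'(b)-T(b)]<0$ on $[\alpha,\infty)$, so $F$ increases and then decreases. Since $F(0)=\alpha>0$ and $F\to-\infty$, there is exactly one root.

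\textbf{The same factorisation is the missing idea in the other parts.} For ``at most three'' you leave the sign count of $F''$ open. The paper shows that $F''=0$ if and only if $U(b):=b+T(b)/T'(b)=\alpha$, and that $U$ is strictly increasing because $2(T')^2-TT''>0$. Hence $S_\alpha(b)=2(\alpha-b)T(b)$ is unimodal, $F'=S_\alpha+1-\theta$ has at most two zeros, and $F$ has at most three.

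For large $\theta$, your account of where the two terms in the max come from is not right. The estimate $2\psi(b)h_\alpha(b)\le 2\psi(\bar z)\max h_\alpha$ is also far too weak for the stated constant: as $\alpha\to0$ the stated threshold tends to $3$, whereas that estimate needs $\theta$ of order $1/\alpha$ near $b=\frac{\alpha}{2}$. In the paper, the first term guarantees $\theta-1\ge 2\alpha T(\alpha)>\sup_b S_\alpha(b)$, so $F$ is strictly decreasing. The second term is exactly the condition $F(\frac{\alpha}{2})\le 0$. Together these give $F<0$ on $(\frac{\alpha}{2},\infty)$, while Proposition~\ref{p0.5_maximiser_prop} places every positive maximiser of $\Gamma_b$ in $(\frac{\alpha}{2},\bar z)$, so no equilibrium exists.

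Your plan for multiplicity, a numerical example, is what the paper does.
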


\begin{proof}
In Theorem \ref{Verification_theorem_2player} we showed that, at least if $\Gamma_d$ is unimodal, if Player 2 uses a fixed threshold strategy $\sigma^d = \inf \{ u : X_u \leq  -d\sqrt{1-u}\}$, then the optimal strategy for Player 1 is again a threshold strategy of the form $\tau = \inf \{ u : X_u \geq  b^*\sqrt{1-t} \}$, where $b^*=b^*(\alpha, \theta, d)$ is the maximiser of $\Gamma_d(\cdot)$. It follows that we expect symmetric equilibria to be solutions to $b^*(\alpha, \theta, d^*) = d^*$.
Since turning points satisfy $h_\alpha(z) = \Gamma_{d^*}(z)$, we wish to find solutions to $h_\alpha(z) = \Gamma_z(z)$. Solutions to this equation define candidates for the thresholds of symmetric Nash equilibria. 
At this stage, they remain candidates because they correspond to turning points of $\Gamma_{z^*}$ and we need to check that they indeed define global maxima. 

Fix $\theta, \alpha>0$. Our first aim is to determine the number of roots of the equation 
$h_\alpha(z) = \tilde{\Gamma}(z)$,  where $\Tilde{\Gamma}:[0,\infty)\mapsto \mathbb{R}$ is given by
\begin{equation}\label{Symmetric_gamma_theta_general}
     \Tilde{\Gamma}(z)=\Gamma_{z}(z)=\frac{\mathcal{G}_{1}(z)-\mathcal{L}_{1}(-z)}{2\psi(z)}=\frac{(1+\theta)z-\alpha}{2e^{\frac{z^{2}}{2}}\int_{0}^{z}e^{-\frac{u^{2}}{2}}du}.
 \end{equation}
Then $h_\alpha(z) = \tilde{\Gamma}(z)$  
is equivalent to
$H_{\alpha,\theta}(z) = H(z; \alpha,\theta)=0$ where $H_{\alpha,\theta}:[0,\infty) \to \RR$ is given by
\begin{equation}\label{DefH}
    H_{\alpha,\theta}(z)=2(1-z^{2}+\alpha z)
    \psi(z) - (1+\theta)z+\alpha.
\end{equation}

Define $S_\alpha, T, U: [0,\infty) \to \RR$ by 
$S_{\alpha}(z)=2(\alpha-z)\Big[(z^{2}+1)\psi(z) +z\Big]$, $T(z) = (z^{2}+1)\psi(z) +z$ and $U(z) = z + \frac{T(z)}{T'(z)}$.
Then $H_{\alpha,\theta}'(z)= S_\alpha(z) + 1 - \theta$ and $H_{\alpha,\theta}''(z)= S'_\alpha(z)=2[(\alpha-z) T'(z) - T(z)]$. It follows that 
$S_\alpha'(z) = 0$ if and only if $U(z)=\alpha$.
We show that $U$ is strictly monotonic in $z$ and then it follows from $S_\alpha(0) = S_\alpha(\alpha)=0$ and $S_\alpha(\frac{\alpha}{2})>0$ that $S_{\alpha}(\cdot)$ is unimodal and hence $S_\alpha$ has a unique maximum.

To show that $U$ is increasing we use that $U' = 2 - \frac{T T''}{(T')^2}$. 
Using that $\psi'(z)=z\psi(z)+1$ we have that
$T'(z)=\psi(z)(z^{3}+3z)+z^{2}+2$ and $T''(z)= \psi(z)(z^{4}+6z^{2}+3)+z^{3}+5z$. Therefore, for all $z\geq 0$,
\begin{equation*}
    \begin{split}
        &2(T'(z))^{2}-T''(z)T(z) \\
       &\hspace{+2CM}= \psi(z)^{2}(z^{6}+5z^{4}+9z^{2}-3)+\psi(z)(2z^{5}+8z^{3}+16z)+z^{4}+3z^{2}+8 \\
      &\hspace{+2CM} > 3(z^4-1) \psi(z)^2 + 3z^2 + 8 > 0.
    \end{split}
\end{equation*}
where the last inequality follows immediately when $z \geq 1$ and, for $z<1$ from $\psi(z) < ze^{1/2} < z\sqrt{3}$ for $z \in (0,1)$. 
We conclude that $S_{\alpha}$ is unimodal. Indeed, since $S_\alpha(0)=0=S_\alpha(\alpha)$ and $S_\alpha(\cdot)>0$ on $(0,\alpha)$ we have that 
$S_\alpha$ has a unique maximum at $\hat{z}(\alpha)$ (say) where $\hat{z}(\alpha) \in (0,\alpha)$ is unique.

Note that $S_\alpha(z) < 2 \alpha \left[ \alpha + (\alpha^2 + 1)\psi(\alpha)\right]$ so that $\theta \geq 1+ 2 \alpha \left[ \alpha + (\alpha^2 + 1)\psi(\alpha)\right]$ is a sufficient condition for 
$\theta > 1 + S_\alpha(z)$ for all $z$.

\vspace{2mm}
Now we consider roots of $H_{\alpha,\theta}$ for different values of $\theta$.

\noindent{Case 1: Large $\theta$: $\theta \geq \theta^* := 1 + S_\alpha(\hat{z}(\alpha))$; there exists a unique root for $H_{\alpha,\theta}$.}

Recall that $H_{\alpha,\theta}'(z)=S_{\alpha}(z)+1-\theta.$ We have $H_{\alpha,\theta}'(z)\leq 0$ for every $z\geq 0$. Therefore, for fixed $\alpha$ and $\theta$, $H_{\alpha,\theta}(z)$ is monotonically decreasing with respect to $z$. Since $H_{\alpha,\theta}(0)=\alpha>0$, and $\lim_{z\to\infty}H(z)=-\infty$, there exists a unique root for $H_{\alpha,\theta}$.

\noindent{Case 2a: Moderate $\theta$: $\theta \in [1, \theta^* := 1 + S_\alpha(\hat{z}(\alpha)))$ and $H_{\alpha,\theta^*}(\hat{z}(\alpha)) \geq 0$; there exists a unique root for $H_{\alpha,\theta}$.}

From the definition of $H(z;\alpha,\theta)$ ,
we have that for fixed $z$, $H(z;\alpha,\theta)$ is decreasing in $\theta$.
Further, there are two roots of $H'_{\alpha,\theta}=0$ which we denote $z_{1}=z_1(\alpha,\theta)$ and $z_{2}=z_2(\alpha,\theta)$ chosen so that $z_{1}<\Hat{z}(\alpha)<z_{2}$. It is straightforward to see that $z_{1}$ is the local minimizer and $z_{2}$ is the local maximizer. 
Then, for any $z \leq z_2(\alpha,\theta)$,
\[ H(z; \alpha, \theta) \geq  H(z_1(\alpha,\theta); \alpha, \theta) > H(z_1(\alpha,\theta); \alpha, \theta^*) \geq H(\hat{z}(\alpha); \alpha, \theta^*) \geq 0, \]
where the first inequality follows from the fact that $z_1(\alpha)$ is a local minimiser of $H(\cdot; \alpha, \theta)$, the second from the strict monotonicity of $H$ in $\theta$, the third from the monotonicity of $H(z; \alpha, \theta^*)$ and the final inequality from the fact that we are in Case 2a. On the other hand, for any $z > z_2(\alpha,\theta)$, $H_{\alpha,\theta}'(z) <0$ and since $\lim_{z \rightarrow \infty} H_{\alpha,\theta}(z) = - \infty$ we conclude that there exists a unique root to $H_{\alpha,\theta}$.

\noindent{Case 2b: Moderate $\theta$: $\theta \in [1, \theta^* := 1 + S_\alpha(\hat{z}(\alpha)))$ and $H_{\alpha,\theta^*}(\hat{z}(\alpha)) < 0$; there exists at least one and at most three roots for $H_{\alpha,\theta}$.}

$H_{\alpha,\theta}$ is initially decreasing, then increasing and then decreasing again. Hence it can cross zero at most three times.

\noindent{Case 3: Small $\theta$: $0 < \theta \leq 1$; there exists a unique root of $H_{\alpha,\theta}$.}

Suppose that $\theta=1$. Then, $H_{\alpha,\theta}'(z) = S_\alpha(z)$ and for $z<\alpha$ we have $H_{\alpha,\theta}'(z)>0$ and for $z>\alpha$ we have $H_{\alpha,\theta}'(z)<0$. Since $H_{\alpha,\theta}(0)=\alpha>0$ and $\lim_{z \rightarrow \infty} H_{\alpha,\theta}(z) = -\infty$ there is a unique root to $H_{\alpha,\theta}(\cdot) = 0$ in $(\alpha,\infty)$.

Now suppose that $\theta<1$. Then $H(z;\alpha, \theta)> H(z; \alpha,1)$ and there is no root of $H_{\alpha,\theta}$ for $z \leq \alpha$. $H_{\alpha,\theta}$ reaches a positive maximum at $\hat{z}(\alpha)$ after which it is decreasing, and therefore must have a unique root in $(\alpha,\infty)$.

\vspace{2mm}

Roots of $H_{\alpha,\theta}$ are candidates for the threshold values of symmetric Nash equilibria. In addition, zero is a potential threshold value for a Nash equilibrium (the roots of $H_{\alpha,\theta}$ corresponding to turning points, but we might also have a maxima at an extremal point). However, we can rule out $0$ as generating a symmetric Nash equilibria as $\Gamma_0(z) = \frac{(z - \alpha)}{\psi(z)}$ and this is not maximised at zero. Thus, if Player 2 stops at a threshold of zero, then Player 1 should use a strictly positive threshold in response.

In order
to show that a candidate thresholds (i.e. a root $z^*$ of $H_{\alpha,\theta}$) generates a symmetric Nash equilibrium solution
we want to apply Theorem~\ref{Verification_theorem_2player}.
Assumption~\ref{supermartingale_assumption} is satisfied if $z^*> \alpha/2$. Further, Assumption~\ref{ass:Gamma} is saisfied if
$\Gamma_{z^*}= (\Gamma_{z^*}(z))_{z \geq 0}$ is maximized at $z=z^*$ (all we currently know is that it is a turning point) and if $\Gamma_{z^*}'<0$ for $z>z^*$. Thus we need 
\begin{enumerate}
     \item[(a)] $z^* \geq \frac{\alpha}{2}$;
     \item[(b)]   $\Gamma_{z^*}(z^*)\geq\Gamma_{z^*}(0)$.
\end{enumerate}
(Note that if $z< \frac{\alpha}{2}$ is a turning point of $\Gamma_{z^*}$ then $z$ is a local minimum. Otherwise, if $z^* > \alpha/2$ then $z^*$ is an upcrossing of zero by $\Upsilon_{z^*} = \Gamma_{z^*} - h$.)

It is easily seen that $\Gamma_d(0) < \Gamma_d(d)$ if and only if $\theta < 1 + \alpha \frac{2 \phi(d) - 1}{d}$. Let $\Psi:(0,\infty) \to (0,\infty)$ be given by $\Psi(d) = \frac{2 \phi(d) - 1}{d}$. It can be verified that $\Psi$ has a minimum value $\Psi_* = \frac{2 \phi(d_*) - 1}{d_*}$ attained at $d_*$ where $d_*$ is the unique root in $(0,1)$ of $2(1-d^2)\phi(d) - 1=0$. (Uniqueness follows from the fact that $(1-d^2)\phi(d)$ is decreasing.). We find that numerically $d_{*}\approx 0.79765$ and $\Psi_*\approx 2.1928$.

Then, a sufficient condition for $\Gamma_{d}(0) < \Gamma_{d}(d)$ for all $d$ including at $d=d^*$ is that $\theta < 1 + \Psi_* \alpha$. 

Suppose $\theta \leq 1$ so that we are in Case 3. Then there is a unique root of $H_{\alpha,\theta}$, and this root $z^*$ satisfies $z^* \geq \alpha > \frac{\alpha}{2}$; moreover $\theta < 1 + \Psi_* \alpha$. It follows that both the conditions (a) and (b) above are satisfied and that $z^*$ defines a symmetric Nash equilibrium. See Figure~\ref{UniqueSolution_plot}.

 Suppose now that $\theta \geq \theta^* = 1 + S_\alpha(\hat{z}(\alpha))$. 
Then $H_{\alpha,\theta}$ has a unique root $z^*$; this root is the only candidate for a symmetric Nash equilibrium. A sufficient condition for (a) to {\em not} hold is that $H_{\alpha,\theta}(\frac{\alpha}{2})<0$ or equivalently $\theta>\frac{4}{\alpha}(1+\frac{\alpha^{2}}{4})\psi(\frac{\alpha}{2})+1$. Recall that $S_{\alpha}(z)<2\alpha[\alpha+(\alpha^2 +1)\psi(\alpha)]$. It follows that if $\theta> 1+ \max \{2\alpha[\alpha+(\alpha^2 +1)\psi(\alpha)], \frac{1}{\alpha}(4+\alpha^{2})\psi(\frac{\alpha}{2}) \}$ then there is only one non-zero candidate threshold $z^*$ which might define a symmetric Nash equilibrium. However, this threshold is a local minimizer of $\Gamma_{z^*}(\cdot)$, thus it cannot be the optimal response. It follows that in this case no symmetric Nash equilibrium exists. See Figure~\ref{NoSolution_plot}.

Now we consider moderate $\theta$. In Case 2b above there can be multiple candidate symmetric Nash equilibria, in the sense that there are values of $\alpha$ and $\theta$ for which $H_{\alpha,\theta}$ has multiple roots (at most three). We give a numerical example to show that it is possible that there are two roots which define symmetric Nash equilibria. To do this we need to check that for two candidates (a) and (b) above hold.
See Figure~\ref{TWOSolution_plot}.

\end{proof}

\begin{figure}[ht]
    \centering

    \begin{subfigure}[t]{0.49\textwidth}
        \vspace{0pt}
        \centering
        \includegraphics[width=\linewidth]{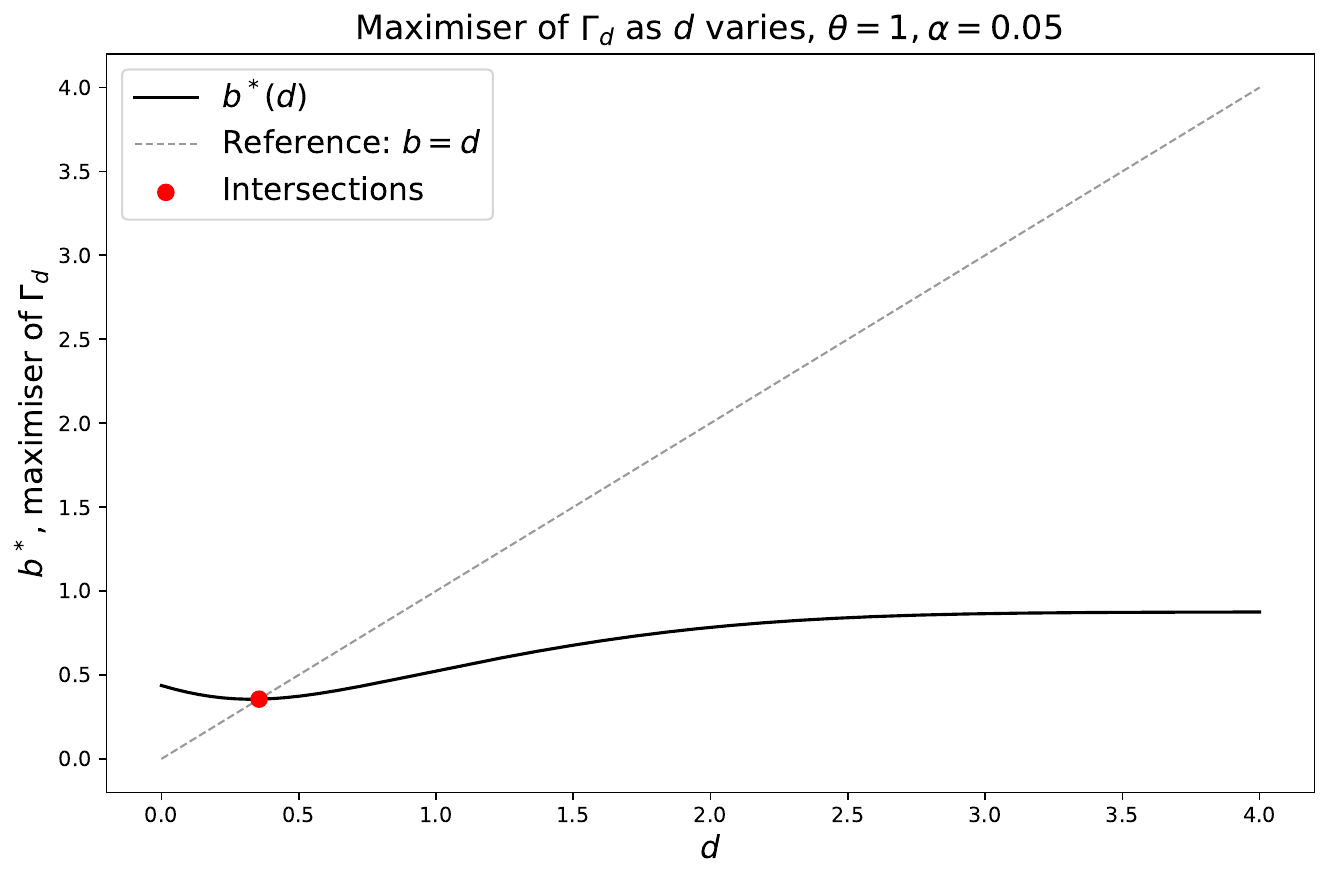}
        \caption{An example where there is a unique symmetric Nash equilibrium. There is a symmetric Nash equilibrium where the maximiser $b^*(d)$ of $\Gamma_d$ crosses the diagonal.}
        \label{UniqueSolution_plot}
    \end{subfigure}
    \hfill
    \begin{subfigure}[t]{0.49\textwidth}
        \vspace{0pt}
        \centering
        \includegraphics[width=\linewidth]{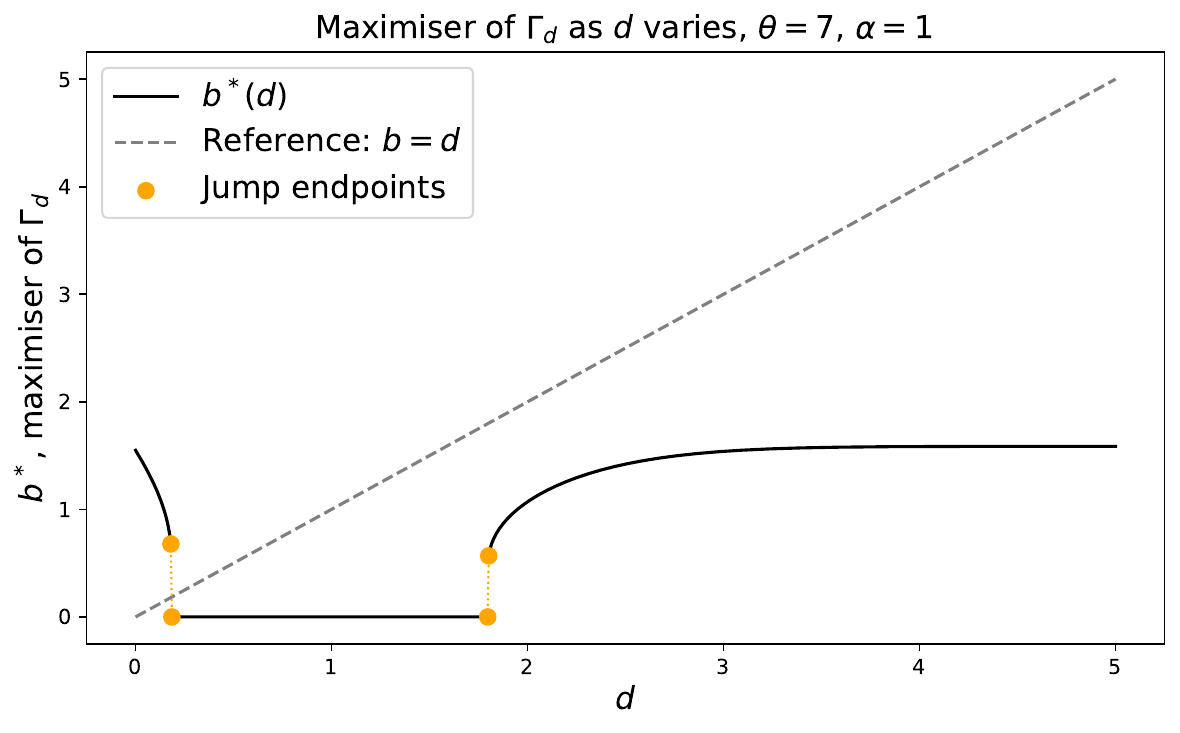}
        \caption{An example where there is no symmetric Nash equilibrium.}
        \label{NoSolution_plot}
    \end{subfigure}

    \caption{Examples illustrating the unique existence and non-existence of symmetric Nash equilibria.}
    \label{fig:comparison}
\end{figure}

\begin{figure}[ht]
    \centering
    \includegraphics[width=0.8\linewidth]{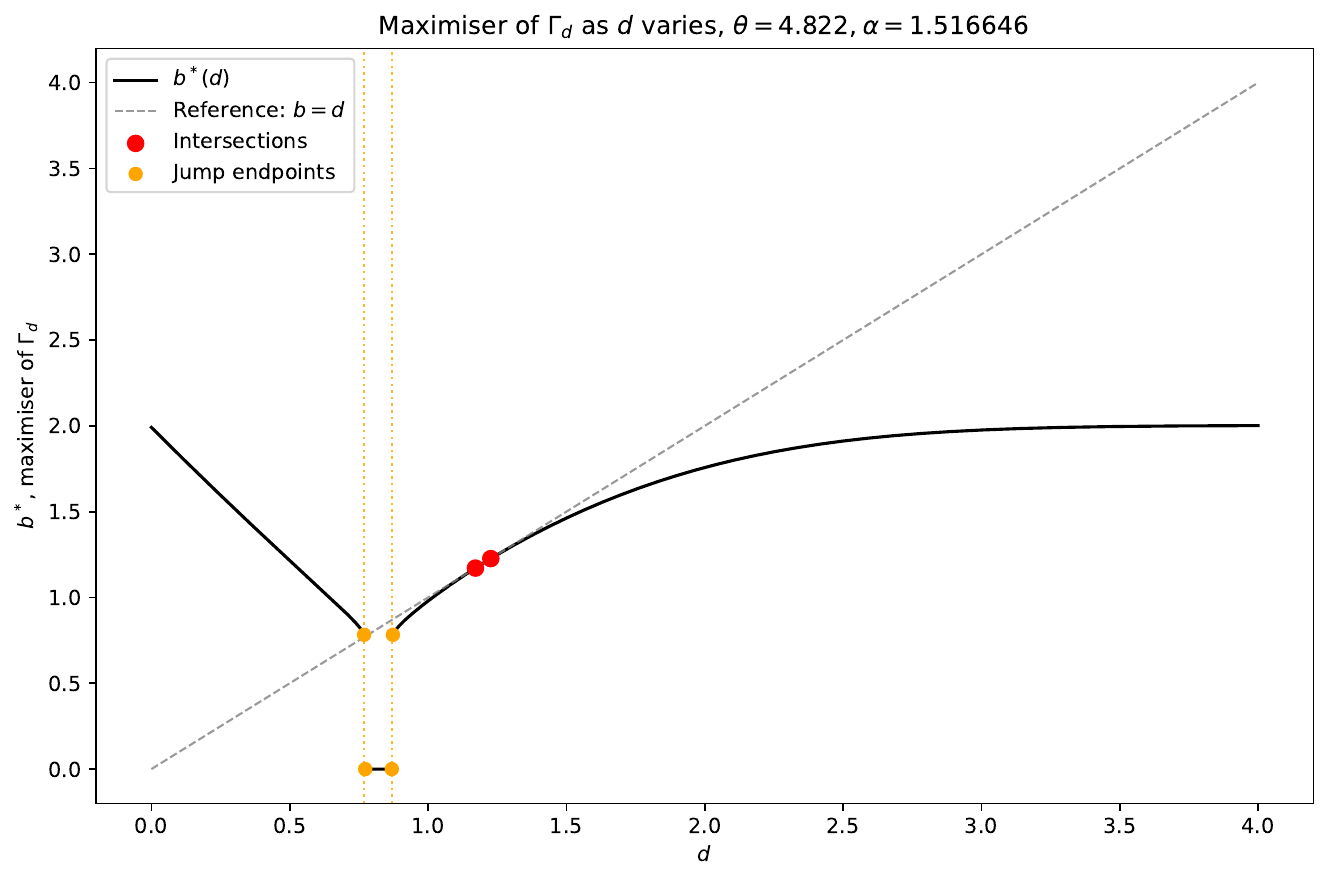}
    \caption{An example where there are two symmetric threshold-type Nash equilibrium. The conditions $z^*\geq\frac{\alpha}{2}$ and $\Gamma_{z^*}(z^*)\geq \Gamma_{z^*}(0)$ are satisfied numerically for both candidate thresholds $d=1.18$ and $d=1.24$.}
    \label{TWOSolution_plot}
\end{figure}

\section{The $N$-player game with a spider Brownian bridge}
 \label{sec:spidergame}

\subsection{Game formulation}
\label{ssec:gameformulation}

Intuitively speaking, a spider Brownian motion is a generalized version of a skew Brownian motion constructed on a star-shaped graph in the plane. The state-space consists of $N$ half-lines, or `rays', glued together at a common point (the vertex). When the spider process is on a given ray it behaves like a standard Brownian motion. Since a Brownian motion always returns to 0, the spider process always returns to the vertex. At the vertex, the spider process chooses one of the rays randomly according to fixed probabilities, and having chosen a ray the process then proceeds as a Brownian motion on that ray. In this description, little use is made of the fact that $B$ is a Brownian motion, beyond the fact that the process can be decomposed into a family of excursions away from the origin. Therefore we can replace the Brownian motion $B$ in the definition of a spider Brownian motion with a Brownian bridge $X$ in order to construct a spider Brownian bridge.

Given $N \in \NN$ with $N\geq 2$ let $I^\Theta = \{ \theta_1, \theta_2, \ldots, \theta_N \}$ be a set of distinct non-zero elements (typically interpreted as angles in the unit circle). Let $X = (X_t)_{0 \leq t \leq 1}$ denote a Brownian bridge and let $R =(R_t)_{t \geq 0}=(|X_t|)_{0 \leq t \leq 0}$ denote the modulus of $X$ so that $R$ is a reflecting Brownian bridge. Then $X$ (or $R$) can be decomposed into excursions away from zero. The set of such excursions is countable and can therefore be indexed by the set of natural numbers. Let $I_k=(g_k,d_k)$ be the $k^{th}$ excursion interval under this indexation and then $R_{g_k} = 0 = R_{d_k}$ and $R > 0$ on $I_k$.

Define the function $s : [0,1] \to \NN \cup \{0\}$ by $s(t) = k$ if $R_{t} \neq 0$ and $t \in (g_k,d_k)$ and $s(t)=0$ if $R_t=0$. Let $(\Theta^\NN_k)_{k \geq 1}$ be a sequence of independent, identically distributed, $I^{\Theta}$-valued random variables which is independent of $X$ and such that
\[ \PP( \Theta^{\NN}_k = \theta_j) = p_j  \hspace{10mm} k\geq 1, 1 \leq j \leq N. \]
Here $(p_1, p_2, \ldots, p_N)$ is a strictly positive probability vector, i.e $p_j>0$ for each $j \in \{1,2, \ldots , N \}$ and $\sum_{j=1}^N p_j = 1$. Let $(\Theta_t)_{0 \leq t \leq 1} = \Theta^{\NN}_{s(t)}$.
Then $\Theta$ is a $I^\Theta \cup \{0 \}$-valued process which is constant on excursions of the Brownian bridge away from zero, and the values on different excursions are independent.

The spider Brownian bridge is the process $(R,\Theta)= (R_t, \Theta_t)_{t \geq 0}$.
The state space of $(R, \Theta)$ is $E = ( (0,\infty) \times \{\theta_1,\theta_2, \ldots \theta_N \} ) \cup \{0,0\}$.
We call $R$ the radial component and $\Theta$ the angular component.

We suppose that the probability space $(\Omega, \mathcal{F}, \mathbb{P}, \mathbb{F}=(F_{t})_{0 \leq t\leq 1})$ satisfies the usual conditions and supports the reflecting Brownian bridge $R$ and the angular process $\Theta$. In particular, we suppose that
\[ \sF_t = \sigma \left( (R_s; 0 \leq s \leq t) \cup (\Theta^\NN_k; g_k \leq t) \right). \]

Now we introduce a generalisation of the two-player non-zero-sum optimal stopping game presented in the previous section to a $N$-player game. In place of a skew-Brownian motion the underlying stochastic process is the spider Brownian bridge $(R, \Theta)$. The generalized game involves $N$ players, labeled Player 1, Player 2,... up to Player $N$. Player $j$ chooses a stopping time $\tau_{j} \in \Tau^j$ where $\Tau^j$ is the set of stopping times such that either $\Theta_{\tau_j} = \theta_j$ or $\tau_j=1$ (i.e. Player $j$ can only stop the game when the angular part of the spider Brownian motion is in $\{ \theta_j \}$, or at the terminal time). If Player $k$ stops the game first (i.e. if $\tau_k < \min_{\ell \neq k} \tau_\ell$) then Player $j$ receives $\GG^j_k(\tau_k, R_{\tau_k} , \Theta_{\tau_k}) = \GG^j_k(\tau_k, R_{\tau_k} , \theta_k)$ (and if the game is not stopped before the terminal time then no player receives a payoff, i.e. $\GG^j(1,0,0)=0$). For a family of stopping times $(\tau_1, \tau_2, \ldots \tau_N)$ with $\tau_j \in \Tau^j$ the payoff $P^j$ to Player $j$ is given by
\begin{equation}
\label{eq:payoff}
        P^{j}(\tau_{1},...,\tau_{N})=
        \sum_{k=1}^{N}\GG^j_{k}(\tau_{k},R_{\tau_{k}},\Theta_{\tau_k}) \mathbb{I}_{\{\tau_{k}< \min_{j \neq k} \tau_{j}\}}.
\end{equation}

If the payoffs have certain scaling properties then we expect that these scaling properties will be inherited by the game, and then we may expect that the players each play threshold strategies of square root form. To this end we assume that for each $j,k \in \{ 1, \ldots , N \}$ we have
\[ \GG_k^j(t,r,\theta_j) = \sqrt{1-t} \sG_k^j \left( \frac{r}{\sqrt{1-t}} \right) \]
for a family of continuous, twice-differentiable functions $(\sG_k^j)_{1 \leq j,k \leq N} : (0,\infty) \mapsto \RR$ which are bounded below. In the typical example we have that for $j \neq k$ we have $\sG_k^j : (0,\infty) \to \RR$ is negative and decreasing (corresponding to the fact that if Player $k$ stops the game when $\Theta_t = \theta_k$ then Player $j$ makes a payment to Player $k$, and this payment is increasing in the magnitude of $R$). Also, in the typical example we have that for $j = k$ we have $\sG_j^j : (0,\infty) \to \RR$ is increasing (corresponding to the fact that if Player $j$ stops the game when $\Theta_t = \theta_j$ then the payment Player $j$ receives is increasing in the magnitude of $R$). Our canonical example is $\sG_k^j(z) = Q_k^j z - \alpha_j \delta_{jk}$ where $(Q_k^j)_{1 \leq j,k \leq 1}$ is a constant matrix in $\RR^N \times \RR^N$ with $Q_j^j>0$ and $Q_k^j<0$, $\delta_{jk}=1$ if $j=k$ and zero otherwise and $\alpha = (\alpha_j)_{1 \leq j \leq N}$ is a constant vector in $(0,\infty)^N$.

If $\sum_{k=1}^N Q_k^j = 0$ then we can think of the game as a zero sum game modified by a cost paid by the stopper.

\subsection{An optimal stopping problem for Player $i$}
\label{ssec:gameosp}

Fix $i \in \{1, \ldots n \}$. We consider the game from the perspective of Player $i$. Suppose that the other players are playing threshold strategies of square-root form so that for $j \neq i$, Player $j$ uses the stopping rule $\tau_j = \inf\{  u : \Theta_u = \theta_j , R_u \geq d_j \sqrt{1-u} \} \wedge 1$ where the threshold constants $(d_j)_{j \neq i}$ are given. Then the problem from Player $i$'s perspective is an optimal stopping problem.

The ansatz is that
\begin{enumerate}
\item Player $i$ will also play a threshold strategy of square root form: $\tau_i = \inf\{  u : \Theta_u = \theta_i , R_u \geq b \sqrt{1-u} \}$, where $b$ is to be determined.
\item The problem can be rewritten in terms of $(Z_t, \Theta_t)_{0 \leq t \leq 1}$ where for $t<1$, $Z_t = \frac{R_t}{\sqrt{1-t}}$ and $Z_1=0$.
\item The value function $V^i$ for Player $i$ will be of the form $V^i(t,r,\theta) = \sum_{j=1}^{N}\sqrt{1-t} f^i_j \left( \frac{r}{\sqrt{1-t}} \right) \mathbb{I}_{\{\theta = \theta_j\}} + \sqrt{1-t} f^i_0 \mathbb{I}_{\{\theta = 0\}}$ where $f^i_j$ is continuous on $(0,\infty)$, including at $d_j$ for $j \neq i$ and at $b$ for $j=i$.
\item On the stopping region, for $j \neq i$, $f^i_j(z) = \sG_j^i(z)$ for $z \geq d_j$, and for $j=i$, $f^i_i(z) = \sG_i^i(z)$ for $z \geq b$.
\item The value function is continuous at 0 so that $\lim_{z \downarrow 0}f^i_j(z)$ does not depend on $j$ and is equal to $f^i_0$.
\item The $p$-weighted sum of the radial derivatives at 0 is zero so that $\sum_{j=1}^N p_j \frac{\partial}{\partial z} f^i_j (z) = 0$.
\item On the continuation region, scalings of the problem mean that $f^i_j=f^i_j(z)$ solves the ordinary differential equation $f'' - zf' - f=0$.
\end{enumerate}
The above elements of the ansatz apply to the case where Player $i$ follows any square-root threshold strategy.
The final part of the ansatz is that for the optimal stopping rule 
\begin{enumerate}
\item[8.] The optimal value $b^*$ of $b$ is determined so that the value function is maximised uniformly over the state space and that this is equivalent to smooth fit of the value function at $b$.
\end{enumerate}

Recall that the fundamental solutions of $f'' - zf' - f=0$ are $\phi$ and $\psi$. Then from the ansatz we expect that
\[ f_j^i(z) = A_j^i \phi(z) + B_j^i \psi(z) \]
where the constants $(A_j^i)_{j = 1,2,\ldots, N }$ and $(B_j^i)_{j = 1,2,\ldots, N }$ are to be determined. Note that $A_j^i$ and $B_j^i$ will depend on $b$ so sometimes we will write them as functions of $b$, but they are constants in that they do not depend on $z = \frac{r}{\sqrt{1-t}}$.

By continuity at 0 we have that $A_j^i=A^i$. For $j \neq i$, by value matching at $d_j$ we have that $\sG_j^i(d_j) = A^i \phi(d_j) + B_j^i \psi(d_j)$ and by value matching at $b$ we have
\begin{equation}
\label{eq:ABvaluematching}
\sG_i^i(b) = A^i \phi(b) + B_i^i \psi(b).
\end{equation}
By the derivative condition at 0 we also have $\sum_{j=1}^N p_j B_j^i = 0$.

It follows that for each $j \neq i$ we have that $A^i$ solves $A^i \frac{\phi(d_j)}{\psi(d_j)} + B^i_j  = \frac{\sG_j^i(d_j)}{\psi(d_j)}$, with a similar expression for $j=i$ but replacing $d_j$ with $b$. Then multiplying by $p_j$ and summing over $j$ we have that
\[ A^i = A^i(b) = \frac{\frac{p_i \sG_i^i(b)}{\psi(b) } + \sum_{j=1,j \neq i}^N \frac{p_j \sG^i_j(d_j)}{\psi(d_j)}}{\frac{p_i \phi(b)}{\psi(b)} + \sum_{j=1, j \neq i}^N \frac{p_j \phi(d_j)}{\psi(d_j)}} = \frac{p_i \sG_i^i(b)+ {\psi(b)} \sD_{-i}}{ p_i \phi(b) + {\psi(b)}  \sE_{-i} }  \]
where $\sE_{-i} = \Big\{\sum_{j \neq i, j=1}^{N} \frac{p_{j}\phi(d_{j})}{\psi(d_j)} \Big\}$ and $\sD_{-i} = \Big\{\sum_{j \neq i,j=1}^{N} \frac{p_{j}\sG^i_{j}(d_{j})}{\psi(d_j)} \Big\}$ depend on $(d_j)_{j \neq i}$ but do not depend on $b$.
Then we have
\begin{equation}
\label{eq:defBii} B^i_i = B_i^i(b) = \frac{\sG_i^i(b)}{\psi(b)} - A^i  \frac{\phi(b)}{\psi(b)} = \frac{ \sE_{-i} \sG^i_i(b) - \sD_{-i} \phi(b)}{\sE_{-i} \psi(b) + p_i \phi(b)}
\end{equation}
and hence also that $A^i \sE_{-i} - \sD_{-i} = p_i \frac{\sE_{-i} \sG_i^i - \sD_{-i} \phi(b)}{\sE_{-i} \psi(b) + p_i \phi(b)} = p_i B^i_i$ so that
\begin{equation}
\label{eq:defAi}
   A^i = A^i(b) =  \frac{\sD_{-i}}{\sE_{-i}} + \frac{p_i}{\sE_{-i}} B_i^i(b).
\end{equation}
We also have
\begin{equation}
\label{eq:defBij}
B_j^i = \frac{\sG_j^i(d_j)}{\psi(d_j)} - A^i  \frac{\phi(d_j)}{\psi(d_j)}
= \frac{ \sE_{-i} \sG_j^i(d_j) - \sD_{-i} \phi(d_j)}{\sE_{-i} \psi(d_j)} - \frac{\phi(d_j)}{\psi(d_j)} \frac{p_i}{\sE_{-i}} B_i^i(b).
\end{equation}

Recall that we have fixed Player $i$, and the square root thresholds of the other players as represented by the family $(d_j)_{1 \leq j \leq N, j \neq i}$. To ease notation we temporarily drop the superscript $i$ for Player $i$ and write $\sE = \sE_{-i}$ and $\sD = \sD_{-i}$, together with $A$, $(B_j)_{1 \leq j \leq n}$, $(\sG_j)_{1 \leq j \leq N}$ in place of $A^i$, $(B_j^i)_{1 \leq j \leq N}$ and $(\sG_j^i)_{1 \leq j \leq N}$. Note that $\sE$, $\sD$ and $(\sG_j)_{1 \leq j \leq N, j \neq i}$ do not depend on $b$, but $A$, $(B_j)_{1 \leq j \leq N}$ and $\sG_i$ do depend on $b$.

We consider the scaled process $(Z_t,\Theta_t)_{0 \leq t \leq 1}$ where for $t<1$, $Z_t = \frac{R_t}{\sqrt{1-t}}$ and $Z_1 = 0$. The state space of $(Z,\Theta)$ is the same as the state space $E$ of $(R,\Theta)$.
Introduce the function $f = f^b : E = ((0,\infty) \times ( \{\theta_1,\theta_2, \ldots \theta_N \} ) \cup \{0,0\} \to \RR$ by
\begin{equation}
\label{eq:deffb}
f^b(z,\theta) = \begin{cases}  A(b)    & (z,\theta)=(0,0)  \\
                               A(b) \phi(z) + B_i(b) \psi(z)     & 0< z < b, \theta = \theta_i \\
                                \sG_i(z)   & z \geq b, \theta = \theta_i \\
                                A(b) \phi(z) + B_j(b) \psi(z)     & 0< z < d_j, \theta = \theta_j, 1 \leq j \leq N, j \neq i\\
                                \sG_j(z)   & z \geq d_j, \theta = \theta_j, 1 \leq j \leq N, j \neq i 
                                \end{cases}   \end{equation}
We write $f'(z, \theta_j)$ for $\frac{\partial f^b}{\partial z}|_{(z,\theta_j)}$. By construction, $f^b(\cdot,\theta_i)$ is continuous at $b$ (recall \eqref{eq:ABvaluematching}) and $f^b(\cdot,\theta_j)$ is continuous at $d_j$.

Define $\Gamma:(0,\infty) \to \RR$ by
\[ \Gamma(b) = B_i = \frac{ \sE \sG_i(b) - \sD \phi(b)}{\sE \psi(b) + p_i \phi(b)}   \]
Clearly $\Gamma$ is continuous. In general we have that $\Gamma$ has a supremum but this supremum may not be attained. In the next assumption we suppose that the payoff structure is such that the supremum is attained, at $b^*$ say where $b^* \in (0,\infty)$.

\begin{assumption}
\label{ass:Gamma_N}
$\Gamma$ has a maximum, at $b^*$ say, where $b^*>0$, and $\Gamma$ is non-increasing on $[b^*,\infty)$.
\end{assumption}

\begin{prop} Suppose Assumption~\ref{ass:Gamma_N} holds.

Then $f^{b}$ given in \eqref{eq:deffb} attains its maximum over $b$ at $b^*$, uniformly in $(z,\theta) \in E$.
\end{prop}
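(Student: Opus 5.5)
The plan is to mimic the proof of Proposition~\ref{prop:maximiser}, using the explicit formulae \eqref{eq:defAi}, \eqref{eq:defBii} and \eqref{eq:defBij}. These express every coefficient of $f^b$ as an affine function of $\Gamma(b)=B_i(b)$, with coefficients that do not depend on $b$. I will assume throughout that $\sE=\sE_{-i}>0$, which holds since $p_j>0$ and $\phi,\psi>0$ on $(0,\infty)$, taking the thresholds $d_j>0$. The argument then splits according to the point $(z,\theta)\in E$.

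\textbf{Vertex and ray $j\neq i$ above its threshold.} At the vertex, \eqref{eq:defAi} gives $A(b)=\sD/\sE+(p_i/\sE)\Gamma(b)$. Since $p_i/\sE>0$, $A$ is maximised at $b^*$. On a ray $j\neq i$ with $z\geq d_j$, $f^b(z,\theta_j)=\sG_j(z)$ is independent of $b$, so there is nothing to prove.

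\textbf{Ray $j\neq i$ below its threshold.} For $0<z<d_j$, substituting \eqref{eq:defAi} and \eqref{eq:defBij} gives
\[ f^b(z,\theta_j)=c_j(z)+\frac{p_i}{\sE}\Big(\phi(z)-\frac{\phi(d_j)}{\psi(d_j)}\psi(z)\Big)\Gamma(b), \]
with $c_j(z)$ independent of $b$. The bracket equals $\psi(z)\phi(z)\big(\psi(z)^{-1}\phi(z)^{-1}\cdot\phi(z)\big)\cdots$; more simply, it is positive because $\phi/\psi$ is strictly decreasing on $(0,\infty)$ and $z<d_j$. This monotonicity follows from $(\psi/\phi)'=1/\phi>0$, using \eqref{eq:phipsiderivaties}. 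Hence this branch is also maximised at $b^*$. This is the analogue of the negative-half-line step in Proposition~\ref{prop:maximiser}.

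\textbf{Player $i$'s own ray.} For $0<z<b$,
\[ f^b(z,\theta_i)=\frac{\sD}{\sE}\phi(z)+\Big(\psi(z)+\frac{p_i}{\sE}\phi(z)\Big)\Gamma(b)=:c_i(z)+k(z)\Gamma(b), \]
with $k(z)>0$. Using \eqref{eq:ABvaluematching}, the right-hand side evaluated at $z=b$ equals $\sG_i(b)$. Define $F(z,b)=c_i(z)+k(z)\Gamma(b)$ for every $b$; then $F(z,z)=\sG_i(z)$. For $b\leq z$ we have $f^b(z,\theta_i)=\sG_i(z)=F(z,z)$, while for $b>z$ we have $f^b(z,\theta_i)=F(z,b)$. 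Therefore
\[ \sup_b f^b(z,\theta_i)=\max\Big\{F(z,z),\ c_i(z)+k(z)\sup_{b>z}\Gamma(b)\Big\}. \]
This reduces to $c_i(z)+k(z)\sup_{b\geq z}\Gamma(b)$, because $F(z,z)$ is the $b=z$ term.

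If $z<b^*$, this supremum is attained at $b^*$, since $b^*$ is the global maximiser. If $z\geq b^*$, Assumption~\ref{ass:Gamma_N} (that $\Gamma$ is non-increasing on $[b^*,\infty)$) gives $\sup_{b\geq z}\Gamma=\Gamma(z)$, so the supremum is $\sG_i(z)$. This equals $f^{b^*}(z,\theta_i)$ because $z\geq b^*$ lies in the stopping region. Hence $b^*$ is optimal at every point of $E$.

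\textbf{Main obstacle.} The delicate step is this own-ray case. One must handle carefully that the domain of the formula depends on $b$, and the case $z\geq b^*$ is exactly where the monotonicity part of Assumption~\ref{ass:Gamma_N} is needed. The algebraic check that $F(b,b)=\sG_i(b)$ is routine from the value-matching identity \eqref{eq:ABvaluematching} together with \eqref{eq:defAi}.
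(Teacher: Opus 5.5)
Your proof is correct and takes the same approach as the paper, whose proof simply says the argument of Proposition~\ref{prop:maximiser} carries over. You carry it over faithfully: $A$ and each $B_j$ are affine in $\Gamma(b)$ with positive coefficients, $\psi/\phi$ is monotone on the rays $j\neq i$, and on ray $i$ you use the $\sup_{b\geq z}\Gamma$ argument. The one thing to fix is the garbled half-sentence about the bracket: delete it, since the monotonicity of $\phi/\psi$ that follows is the actual argument.
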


\begin{proof}
The proof follows exactly as in the proof of Proposition~\ref{prop:maximiser}.

\end{proof}

\begin{corollary}\label{cor:valuebddpayoffs}
 For any $z\in (0,b^*)$ we have $f^{b^*}(z,\theta_i)\geq \mathcal{G}_i(z)$. 
\end{corollary}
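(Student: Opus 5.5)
The plan is to mirror the proof of Corollary~\ref{Cor:valuebddpayoffs}, replacing the two-sided structure by the $N$-ray structure. Fix $z \in (0,b^*)$. The first step is to express $\mathcal{G}_i(z)$ in the same basis as $f^{b^*}(z,\theta_i)$. By the value matching identity \eqref{eq:ABvaluematching}, applied with the threshold $b=z$ in place of $b$, we have
\[
\mathcal{G}_i(z) = A(z)\phi(z) + B_i(z)\psi(z),
\]
where $A(\cdot)$ and $B_i(\cdot)=\Gamma(\cdot)$ are the functions of the threshold given in \eqref{eq:defAi} and \eqref{eq:defBii}. This identity holds for every positive threshold: the constants $\sE=\sE_{-i}$ and $\sD=\sD_{-i}$ do not depend on $b$, and \eqref{eq:defBii} was derived precisely from value matching at $b$.

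On the other hand, since $0<z<b^*$, the definition \eqref{eq:deffb} gives $f^{b^*}(z,\theta_i) = A(b^*)\phi(z) + B_i(b^*)\psi(z)$. Subtracting the two expressions yields
\[
f^{b^*}(z,\theta_i) - \mathcal{G}_i(z) = \big(A(b^*)-A(z)\big)\phi(z) + \big(B_i(b^*)-B_i(z)\big)\psi(z).
\]

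It then suffices to show that both brackets are nonnegative. By Assumption~\ref{ass:Gamma_N}, $b^*$ maximises $\Gamma = B_i$, so the second bracket is nonnegative. For the first bracket, \eqref{eq:defAi} gives $A(b) = \frac{\sD}{\sE} + \frac{p_i}{\sE}B_i(b)$. Here $p_i>0$ and $\sE = \sum_{j\neq i} p_j\phi(d_j)/\psi(d_j) > 0$, since $\phi,\psi>0$ on $(0,\infty)$. Hence $A$ is a positive affine function of $B_i$, and it is also maximised at $b^*$. Finally $\phi(z)>0$ and $\psi(z)>0$ for $z>0$, so the difference is nonnegative. (The case $z\ge b^*$, not needed here, is an equality by the definition of $f^{b^*}$.)

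The only real obstacle is the positivity of $\sE$, which requires every $d_j>0$ for $j\neq i$. If some $d_j=0$, the expressions degenerate: $\phi(d_j)/\psi(d_j)$ blows up, which forces $A=\sG_j(0)$, analogously to the $d=0$ case of the two-player section. I would handle this either by assuming $d_j>0$, as is implicit in the formulas above, or by noting that in the degenerate case $A$ is constant in $b$ and $B_i$ is still maximised at $b^*$, so the same argument goes through.
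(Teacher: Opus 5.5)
Your proof is correct and is essentially the paper's argument, which simply refers back to the two-player corollary. Both apply value matching at threshold $z$ to write $\mathcal{G}_i(z)=A(z)\phi(z)+B_i(z)\psi(z)$, subtract this from $f^{b^*}(z,\theta_i)$, and use that $b^*$ maximises both $B_i=\Gamma$ and $A$, the latter through the positive affine relation \eqref{eq:defAi}; your caveat that all $d_j>0$ (so $\sE_{-i}>0$) matches the paper, which takes every threshold in $(0,\infty)$.
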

\begin{proof}
This follows as in the proof of Corollary~\ref{Cor:valuebddpayoffs}.

\end{proof}

\begin{lemma}
\label{lem:smoothfit}
Suppose Assumption~\ref{ass:Gamma_N} holds. Then $f^{b^*}$ is continuously differentiable at $(b^*,\theta_i)$.
\end{lemma}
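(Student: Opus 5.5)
We need to show that the left derivative of $f^{b^*}(\cdot,\theta_i)$ at $b^*$, namely $A(b^*)\phi'(b^*)+B_i(b^*)\psi'(b^*)$, equals $\sG_i'(b^*)$. The plan is to relate this smooth-fit condition to the first-order condition $\Gamma'(b^*)=0$, exactly as in the two-player computation of $\partial\Gamma_1/\partial b$ in the proof of Theorem~\ref{Main_Theorem_NE}.

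\textbf{Step 1: the first-order condition.} Since $b^*\in(0,\infty)$ is an interior maximiser of $\Gamma$ (Assumption~\ref{ass:Gamma_N}) and $\Gamma$ is differentiable on $(0,\infty)$ because $\sG_i$ is $C^2$, we have $\Gamma'(b^*)=0$.

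\textbf{Step 2: differentiate $\Gamma$.} Write $\Gamma=N/M$ with $N(b)=\sE\sG_i(b)-\sD\phi(b)$ and $M(b)=\sE\psi(b)+p_i\phi(b)$. Then
\[
\Gamma'(b)=\frac{N'(b)-\Gamma(b)M'(b)}{M(b)}.
\]
Using $\phi'=z\phi$ and $\psi'=z\psi+1$ from \eqref{eqn:phipsiderivaties}, we get $M'(b)=bM(b)+\sE$, and hence
\[
\Gamma'(b)=\frac{\sE\sG_i'(b)-b\sD\phi(b)-b\Gamma(b)M(b)-\sE\Gamma(b)}{M(b)}.
\]
Since $\Gamma M=N$, this simplifies to
\[
\Gamma'(b)=\frac{\sE}{M(b)}\bigl(\sG_i'(b)-b\sG_i(b)-\Gamma(b)\bigr),
\]
which is the $N$-player analogue of $h_1-\Gamma_1$. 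This requires $\sE>0$ and $M>0$, which hold because all $d_j>0$ so $\psi(d_j)>0$, and $b>0$.

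\textbf{Step 3: compute the left derivative.} From \eqref{eq:ABvaluematching}, $A\phi(b)=\sG_i(b)-B_i\psi(b)$. Therefore
\[
A\phi'(b)+B_i\psi'(b)=b\bigl(\sG_i(b)-B_i\psi(b)\bigr)+B_i\bigl(b\psi(b)+1\bigr)=b\sG_i(b)+B_i(b).
\]
Since $B_i(b)=\Gamma(b)$,
\[
\text{left derivative}-\sG_i'(b)=-\bigl(\sG_i'(b)-b\sG_i(b)-\Gamma(b)\bigr)=-\frac{M(b)}{\sE}\,\Gamma'(b).
\]
At $b=b^*$ this vanishes by Step 1, so the left and right derivatives at $b^*$ coincide.

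\textbf{Step 4: continuity of the derivative.} On each side of $b^*$, $f^{b^*}(\cdot,\theta_i)$ is $C^1$: it is a combination of $\phi$ and $\psi$ on $(0,b^*)$ and equals $\sG_i$ on $[b^*,\infty)$. Combined with the matching of one-sided derivatives at $b^*$ from Step 3, this gives continuous differentiability at $(b^*,\theta_i)$.

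The main point needing care is ensuring $\sE\neq 0$ and that the maximiser is interior. If some $d_j=0$, then $\phi(d_j)/\psi(d_j)$ blows up and the formulas must be handled as in the $d=0$ case of Section 2. Under the standing setup with $d_j>0$, this is routine.
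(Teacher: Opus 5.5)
Your proposal is correct and follows the paper's proof essentially line for line. Both arguments derive $\Gamma'(b)=\frac{\sE}{\sE\psi(b)+p_i\phi(b)}\bigl(\sG_i'(b)-b\sG_i(b)-\Gamma(b)\bigr)$, invoke $\Gamma'(b^*)=0$, and use $\phi'=z\phi$, $\psi'=z\psi+1$ and value matching to get the left derivative $b^*\sG_i(b^*)+B_i(b^*)=\sG_i'(b^*)$; your explicit remark that $\sE>0$ (needed to pass from $\Gamma'(b^*)=0$ to smooth fit) is a point the paper leaves implicit.
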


\begin{proof} Continuity of $f^{b^*}$ at $b^*$ follows from \eqref{eq:ABvaluematching} and \eqref{eq:deffb}.
For the derivative we have
\[ \Gamma'(b) = \frac{\sE}{p \phi(b) + \sE \psi(b)} \left( \sG_i'(b) - b \sG_i(b) - \Gamma(b) \right) \]
Then, at $b^*$ we must have $\sG_i'(b^*) = b^* \sG(b^*) +  \Gamma(b^*)$.

Then, for $z<b^*$, $f'(z,\theta_i) = A(b^*) z \phi(z) + B_i(b^*) (z \psi(z) + 1) = zf(z) + B_i(b^*)$ and $f'(b^*-, \theta_i) = b^*f(b^*) + B_i(b^*) = b^* \sG_i(b^*) + \Gamma(b^*) = \sG_i'(b^*) = f'(b^*+,\theta_i)$.
\end{proof}

Based on our ansatz we define a candidate value function for Player $i$:
\begin{equation}\label{Expression_VE_star}
    V_{*}(t,r, \theta)=
    \begin{cases}
        \sqrt{1-t}A(b^*)  &  (r, \theta) =(0,0) \\
        \sqrt{1-t}\left[A(b^*)\phi(\frac{r}{\sqrt{1-t}})+B_i(b^*,d)\psi(\frac{r}{\sqrt{1-t}})\right],
         &  \theta = \theta_i , \ 0<  r < b^*\sqrt{1-t},\\
        \mathbb{G}_{i}(t,r, \theta_i),  & \theta = \theta_i , r \geq b^*\sqrt{1-t}  \\
        \sqrt{1-t}\left[A(b^*)\phi(\frac{r}{\sqrt{1-t}})+B_j(b^*,d)\psi(\frac{r}{\sqrt{1-t}})\right],
         &  \theta = \theta_j ,  0<  r < d_j \sqrt{1-t},  \\ 
        \mathbb{G}_{j}(t,r, \theta_j),  & \theta = \theta_j ,  r \geq  d_j \sqrt{1-t}, \\ 
     \end{cases}
\end{equation}
where $A(b^*), B_i(b^*)$ and $B_j(b^*)$ are the functions $A$, $B_i$ and $B_j$ given in \eqref{eq:defAi}, \eqref{eq:defBii} and \eqref{eq:defBij} and evaluated at $b^*$ and the last two lines hold for $1 \leq j \leq N, j \neq i$.

The problem facing Player $i$ in the game started at $(t_0,r,\theta)$ (given the strategies of the other players, and the fact that these are threshold strategies of square-root form) is to find
\begin{equation}\label{Valuefunc_P1_E}
    \begin{split}
      \overline{V}(t_0,r,\theta) &= \sup_{\tau \in \sT^{i,t_0}}
 \EE_{t_0,r,\theta} \Bigg[ \GG_i(\tau,R_{\tau},\Theta_\tau)
\mathbb{I}_{\{\tau < \min_{j \neq i} \tau^{t_0}_j \} }\\
    &\hspace{+5cm}+ \sum_{j \neq i} \GG_{j}(\tau^{t_0}_j, R_{\tau^{t_0}_j}, \Theta_{\tau^{t_0}_j}) \mathbb{I}_{\{ {\tau^{t_0}_j} < \tau \wedge \min_{k \neq j,i} {\tau^{t_0}_k}\}}
        \Bigg]   
    \end{split}
\end{equation}
where for $j \neq i$, ${\tau^{t_0}_j} = \inf \{ u \geq t_0; \Theta_t = \theta_j, R_t \geq d_j \sqrt{1-u} \}$, and $\sT^{i,t_0}$ is the set of stopping times $\tau$ such that $\Theta_\tau = \theta_i$ whenever $t_0 \leq \tau < 1$.

It remains to show that the candidate function $V_{*}$ in \eqref{Expression_VE_star}
is indeed the solution to the value function $\overline{V}$ defined in \eqref{Valuefunc_P1_E}.

\begin{lemma}

\begin{enumerate}
\item $M^\phi= (M^{\phi}_t)_{0 \leq t \leq 1}$ given by $M^{\phi}_t = \sqrt{1-t} \phi\left( \frac{R_t}{\sqrt{1-t}} \right)$ is a non-negative local martingale. 
\item For $j \in \{ 1,2, \ldots , N \}$, $M^{\psi,j} =  (M^{\psi,j}_t)_{0 \leq t \leq 1}$ given by $$M^{\psi,j}_t = \sqrt{1-t} \psi\left( \frac{R_t}{\sqrt{1-t}} \right) \left( \mathbb{I}_{ \{ \Theta_t = \theta_j \} } - p_j \right)$$ is a local martingale. 
\item Suppose $C \in \RR$ and $D \in \RR^N$ is such that $\sum_{j=1}^N p_j D_{j}=0$. Then $M = (M^{C,D}_t)_{0 \leq t \leq 1}$ given by
\[ M^{C,D}_t = C \sqrt{1-t} \phi\left( \frac{R_t}{\sqrt{1-t}} \right) + \sum_{j=1}^N D_j \sqrt{1-t} \psi\left( \frac{R_t}{\sqrt{1-t}} \right) \mathbb{I}_{ \{ \Theta_t = \theta_j \} }
 \]
is a local martingale.
\end{enumerate}
\end{lemma}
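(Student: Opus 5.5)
The plan is to apply It\^o's formula (for (1) and the excursion-wise part of (2)) to the scaled radial process $Z_t = R_t/\sqrt{1-t}$, where $R$ is a reflecting Brownian bridge. We then exploit the fact that $\phi'(0)=0$ and $\psi(0)=0$ to handle the behaviour at the vertex. Part (3) then follows by linearity from (1) and (2).

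\textbf{Part (1).} Recall that $R=|X|$ with $X$ a Brownian bridge. By Tanaka's formula,
\[ dR_t = \mathrm{sgn}(X_t)\,dW_t - \frac{R_t}{1-t}dt + dL^0_t, \]
where $W$ is the driving Brownian motion and $L^0$ is the local time of $X$ at zero. Set $g(t,r)=\sqrt{1-t}\,\phi(r/\sqrt{1-t})$. This function is $C^{1,2}$ on $[0,1)\times\RR$ and even in $r$, so we may apply It\^o's formula to $g(t,X_t)$ directly. Using $\phi''-z\phi'-\phi=0$, the $dt$ terms cancel; this is precisely the computation already carried out in the derivation of \eqref{Valuefunc_PDE} with $p=\tfrac12$. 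Since $g$ is smooth in $r$ there is no local time term, because $\phi'(0)=0$. Hence $M^\phi_t=g(t,X_t)$ is a local martingale on $[0,1)$. It is positive since $\phi>0$. To extend to $t=1$, set $M^\phi_1=0$ (recall $Z_1=0$, with the convention that $\sqrt{1-t}\,\phi(Z_t)\to 0$). Fatou's lemma then shows that the extension is a nonnegative supermartingale; if a local martingale is genuinely required on all of $[0,1]$, we localise with $\rho_n=\inf\{t: Z_t\ge n\}\wedge(1-1/n)$.

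\textbf{Part (2).} Define $k(t,r)=\sqrt{1-t}\,\psi(r/\sqrt{1-t})$, which is odd in $r$. On each excursion interval $(g_k,d_k)$ the indicator $\mathbb{I}_{\{\Theta_t=\theta_j\}}-p_j$ is constant. Applying It\^o to $k(t,R_t)$ away from zero, the drift cancels because $\psi''-z\psi'-\psi=0$, so $k(t,R_t)$ has dynamics $\psi'(Z_t)\,dW$-type increments, plus a term $\psi'(0)\,dL^0_t=dL^0_t$ coming from the reflection. The key step is to handle the switching of the angular label at zeros of $R$. Here the function $\sqrt{1-t}\,\psi(Z_t)(\mathbb{I}_{\{\Theta_t=\theta_j\}}-p_j)$ vanishes at the vertex because $\psi(0)=0$, so it is continuous. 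The local time contribution is $\psi'(0)\,(\mathbb{I}_{\{\Theta=\theta_j\}}-p_j)\,dL^0$, evaluated on the excursion about to start. Informally, its compensator is $\sum_\ell p_\ell(\delta_{\ell j}-p_j)\,dL^0=0$.

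Making this rigorous is the main obstacle, since $\Theta$ is not a semimartingale label. I would handle it by the standard $\epsilon$-approximation. Let $\sigma^\epsilon_0=0$, and alternate $\tau^\epsilon_n=\inf\{t>\sigma^\epsilon_{n}: R_t=\epsilon\}$ and $\sigma^\epsilon_{n+1}=\inf\{t>\tau^\epsilon_n: R_t=0\}$. Write $M^{\psi,j}$ as a sum of increments over the intervals $[\tau^\epsilon_n,\sigma^\epsilon_{n+1}]$, on which the label is fixed and the increments are stochastic integrals $\int \psi'(Z)\,dW$. The remainder consists of jumps of size $\sqrt{1-t}\,\psi(\epsilon/\sqrt{1-t})(\mathbb{I}-p_j)$ at the times $\tau^\epsilon_n$. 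Here the label is independent of $\mathcal F_{\tau^\epsilon_n-}$ with mean $p_j$, because $\Theta^\NN_k$ is independent of $R$. These jumps therefore form a martingale with quadratic variation of order $\epsilon^2\,\#\{n\}\approx \epsilon\, L^0$, which tends to zero. Passing to the limit $\epsilon\to0$ in $L^2$ (after localisation) yields the local martingale property.

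\textbf{Part (3).} Write
\[ \sum_j D_j\,\psi\,\mathbb{I}_{\{\Theta=\theta_j\}} = \sum_j D_j\,\psi\,(\mathbb{I}_{\{\Theta=\theta_j\}}-p_j) + \Big(\sum_j p_jD_j\Big)\psi. \]
The last term vanishes by hypothesis. Hence $M^{C,D}=C M^\phi+\sum_j D_j M^{\psi,j}$ is a linear combination of local martingales, and is therefore a local martingale by (1) and (2).
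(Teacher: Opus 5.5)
Parts 1 and 3 follow the paper. In Part 1 you apply It\^o to the even, $C^{1,2}$ function $\sqrt{1-t}\,\phi(x/\sqrt{1-t})$ of the bridge $X$, while the paper applies It\^o to $R$ and removes the reflection term using $\phi'(0)=0$; this is the same computation. Part 3 is the same linearity step.

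Part 2 takes a genuinely different route.
\begin{itemize}
\item The paper fixes a starting point $(t_0,r,\theta_i)$. Up to the first return $T_0^{t_0}$ of $R$ to the vertex the label is frozen, so It\^o applies with no vertex term. On $\{t\wedge S_n\ge T_0^{t_0}\}$ it conditions on $\sF_{T_0^{t_0}}$: the expectation factorises into a radial factor times $\EE[\mathbb{I}_{\{\Theta_t=\theta_j\}}-p_j]=0$, because the excursion straddling $t$ carries a fresh label independent of $R$. This is short and needs only one stopping time plus independence, but it gives only the martingale identity for the stopped process.
\item Your $\epsilon$-upcrossing decomposition costs more. You need asymptotics of the upcrossing count, convergence of stochastic integrals, and localisation away from $t=1$, where $\sqrt{1-t}\,\psi(\epsilon/\sqrt{1-t})$ is no longer $O(\epsilon)$. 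In return it identifies
$M^{\psi,j}_t=\int_0^t(\mathbb{I}_{\{\Theta_s=\theta_j\}}-p_j)\,\psi'(Z_s)\,d\beta_s$
explicitly. That gives continuity and the quadratic variation for free, and shows why the local-time push $\psi'(0)\,dL$ averages out, i.e.\ why $\sum_j p_jD_j=0$ is exactly the right vertex condition.
\end{itemize}

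One step needs repair. In the paper's filtration an excursion's label is revealed at the start of that excursion. So $\Theta_{\tau^\epsilon_n}$ is $\sF_{\tau^\epsilon_n-}$-measurable, not independent of $\sF_{\tau^\epsilon_n-}$ as you claim. Consequently the jump sum
$J^\epsilon_t=\sum_{\tau^\epsilon_n\le t}\sqrt{1-\tau^\epsilon_n}\,\psi(\epsilon/\sqrt{1-\tau^\epsilon_n})(\mathbb{I}_{\{\Theta_{\tau^\epsilon_n}=\theta_j\}}-p_j)$
is not an $(\sF_t)$-martingale. At a time $s\in(\sigma^\epsilon_n,\tau^\epsilon_n)$ with $R_s>0$, the current label is already in $\sF_s$. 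With positive conditional probability it is the label the next jump carries, so $\EE[\text{next jump}\mid\sF_s]\neq 0$.

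You do not need $J^\epsilon$ to be a martingale, only that it vanishes:
\begin{itemize}
\item Conditionally on the whole radial path, the labels of the distinct excursions straddling $\tau^\epsilon_1,\tau^\epsilon_2,\dots$ are i.i.d.\ with law $p$.
\item Hence $\EE[(J^\epsilon_t)^2\mid R]=O(\epsilon^2\,\#\{n:\tau^\epsilon_n\le t\})\to0$.
\item $M^{\psi,j}_t$ equals the stochastic-integral part of your decomposition, plus $J^\epsilon_t$, plus an $O(\epsilon)$ term from an incomplete interval.
\item So after localisation $M^{\psi,j}$ is an $L^2$-limit of $(\sF_t)$-martingales, hence a martingale.
\end{itemize}

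Finally, integrate against $\beta=\int\mathrm{sgn}(X)\,dW$ (the paper's $B$) rather than $W$. The factor $\mathrm{sgn}(X)$ is not adapted to a filtration generated by $R$ and the labels.
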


\begin{proof}
First we prove 1. Since $R$ is a reflecting Brownian bridge, $dR_t = dB_t  - \frac{R_t}{1-t} dt + dL^R_t$, and
\[ dM_t = \phi'\left( \frac{R_t}{\sqrt{1-t}} \right) (dB_t + dL^R_t)  \]
where the $dt$ term disappears since $\phi$ solves $\phi'' - z \phi' - \phi = 0$.
But, $L^R_t$ only grows when $R_t=0$ and $\phi'(0)=0$ so that $\phi'( \frac{R_t}{\sqrt{1-t}}) dL^R_t = 0$. Hence $M$ is a local martingale.

Now we prove 2. Suppose $(R_{t_0},\theta_{t_0}) = (r, \theta_i)$. Let $T_{0}^{t_0} = \inf \{ t \geq t_0 : R_t = 0 \}$. For $t_0 \leq t \leq T_{0}^{t_0}$ a similar argument to the case for 1 (combined with the fact that $\Theta_t = \theta_i$ is constant for $t \leq T_{0}^{t_0}$) gives that $(M^{\psi,j}_t)_{t_0 \leq t \leq T_{0}^{t_0}}$ is a non-negative local martingale. In particular, for a reducing sequence $(S_n)_{n \geq 1}$ such that $R_{t \wedge S_n}$ is bounded,
\[ \EE[ M^{\psi,j}_{t \wedge T_{0}^{t_0} \wedge S_n} | \sF_{t_0} ] = M_{t_0}^{\psi,j} . \]
But $M^{\psi,j}_{t \wedge T_{0}^{t_0} \wedge S_n} = M^{\psi,j}_{t \wedge S_n} \mathbb{I}_{ \{ t \wedge S_n < T_{0}^{t_0} \} } + M^{\psi,j}_{ T_0^{t_0}} \mathbb{I}_{ \{ t \wedge S_n \geq T_{0}^{t_0} \} }$ and on $(t \wedge S_n \geq T^{t_0}_0)$
\begin{equation}
    \begin{split}
      &\EE \left[  M^{\psi,j}_{t \wedge S_n} \mathbb{I}_{ \{ t \wedge S_n \geq T_{0}^{t_0} \} } | \sF_{T^{t_0}_0} \right] = \\
      &\hspace{+2cm}\EE \left[ \left. \sqrt{1-t \wedge S_n}\psi \left( \frac{R_{t \wedge S_n}}{\sqrt{1-t\wedge S_n}} \right) \right| R_{T^{t_0}_0} = 0 \right] \EE[ \mathbb{I}_{\{\Theta_t = \theta_j\}} - p_j |  R_{T_{0}^{t_0}}=0] = 0,  
    \end{split}
\end{equation}
by the independence of the radial and angular parts of the process. 
Then, taking conditional expectations on $\sF_{t_0}$, $\EE \left[  M^{\psi,j}_{t \wedge S_n} \mathbb{I}_{ \{ t \wedge S_n \geq T_{0}^{t_0} \} } | \sF_{t_0} \right] = 0$. Then
\begin{eqnarray*}
    \EE[ M^{\psi,j}_{t \wedge S_n} | \sF_{t_0} ] & = &
       \EE[ M^{\psi,j}_{t \wedge S_n} \mathbb{I}_{ \{ t \wedge S_n < T_{0}^{t_0} \} } | \sF_{t_0}] + \EE[ M^{\psi,j}_{t \wedge S_n} \mathbb{I}_{ \{ t \wedge S_n\geq T_{0}^{t_0} \} } | \sF_{t_0} ] \\
       & = & \EE[ M^{\psi,j}_{t \wedge T_{0}^{t_0} \wedge S_n} | \sF_{t_0} ]  = M^{\psi,j}_{t_0} . 
\end{eqnarray*}
Hence $(S_n)_{n \geq 1}$ is a reducing sequence for $M^{\psi,j}$.

3 follows immediately from 1 and 2.

\end{proof}

Let $\sigma = \sigma^{t_0}_d = \min_{j \neq i} \{ \tau^{t_0}_j \} = \min_{j \neq i} \{ u \geq t_0 : \Theta_u = \theta_j, R_u \geq d_j \sqrt{1-u} \}$.

\begin{corollary}
\label{cor:M*}
Suppose $C>0$, $D_i > 0$ and $\sum_{j=1}^N p_j D_j =0$. Then $(M^{C,D}_{t\wedge \sigma})_{t_0 \leq t \leq 1}$ is a supermartingale.
Moreover, for any $b \geq 0$, $(M^{C,D}_{t\wedge \tau^b_{i} \wedge\sigma})_{t_0 \leq t \leq 1}$ is a martingale where $\tau^b_i = \inf u \geq t_0 : \Theta_u = \theta_i, R_u \geq b \sqrt{1-u} \}$.
\end{corollary}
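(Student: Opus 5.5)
The plan is to follow the same route as in the proof of Theorem~\ref{Verification_theorem_2player}. The idea is to use positivity to show that the stopped local martingale is bounded below, which gives the supermartingale property. Then, for the second claim, we use boundedness of the process on the region before either threshold is hit.

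By part 3 of the preceding Lemma, $M^{C,D}$ is a local martingale, and hence so is the stopped process $(M^{C,D}_{t\wedge\sigma})_{t_0\le t\le 1}$. A local martingale that is bounded below is a supermartingale, by Fatou's lemma applied along a reducing sequence. It therefore suffices to find a lower bound for $M^{C,D}_{t\wedge\sigma}$. We go through the cases according to the current ray.

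\textbf{At the vertex, or on ray $\theta_i$.} Here the value is $\sqrt{1-t}\,[C\phi(Z_t)+D_i\psi(Z_t)]$. This is nonnegative because $C>0$, $D_i>0$, $\phi>0$ and $\psi\ge 0$ on $[0,\infty)$.

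\textbf{On a ray $\theta_j$ with $j\neq i$, before $\sigma$.} Here $Z_t<d_j$, so $Z_t$ lies in the compact interval $[0,d_j]$. Hence $C\phi(Z_t)+D_j\psi(Z_t)\ge -|D_j|\psi(d_j)$.

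\textbf{At time $\sigma$ itself.} Here $Z_\sigma=d_j$ exactly, by continuity of paths, so the same bound applies.

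Combining the cases, and using $\sqrt{1-t}\le 1$, we get
\[
M^{C,D}_{t\wedge\sigma}\ \ge\ -\max_{j\neq i}|D_j|\,\psi(d_j).
\]
This is a finite constant, since there are finitely many players. So the stopped process is bounded below, and therefore a supermartingale. Note that the positivity hypotheses are used only on ray $\theta_i$, which is the one ray where $Z$ is unbounded before $\sigma$.

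For the second claim we use the further stopping at $\tau^b_i$, which makes $Z_{t\wedge\tau^b_i\wedge\sigma}$ bounded on every ray. On ray $i$ it is at most $b$, and on ray $j\neq i$ it is at most $d_j$. So $|M^{C,D}_{t\wedge\tau^b_i\wedge\sigma}|\le |C|\phi(K)+\max_j|D_j|\psi(K)$, where $K=\max(b,\max_{j\ne i}d_j)$. A bounded local martingale is a true martingale, by dominated convergence along the reducing sequence.

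The step needing the most care is the behaviour near $t=1$, together with the claim that the local martingale property of the preceding Lemma holds with respect to $\mathbb{F}$ up to time $1$ inclusive. Since $\sqrt{1-t}\to 0$ and the functions are bounded on the relevant ranges, $M_t\to 0=M_1$ on the stopped sets, so continuity at time $1$ is not an issue. The supermartingale and martingale properties on $[t_0,1)$ then extend to $[t_0,1]$ by bounded or Fatou convergence. I would state this explicitly rather than leave it implicit.
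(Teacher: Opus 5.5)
Your proposal is correct and follows the paper's own argument: a uniform lower bound on the stopped local martingale gives the supermartingale property, and boundedness after additionally stopping at $\tau^b_i$ gives the martingale property. You also supply the case-by-case bounds (nonnegativity at the vertex and on ray $\theta_i$, and $Z<d_j$ on the other rays) and the time-$1$ bookkeeping that the paper's two-line proof leaves implicit. One small correction: for the first claim $Z$ is not bounded on ray $\theta_i$, so at $t\to1$ you should argue $\liminf_{t\to1}M_{t\wedge\sigma}\ge 0=M_1$ from that nonnegativity and use Fatou, rather than appealing to boundedness.
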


\begin{proof}
Since $C>0$ and $D_i>0$, $M^{C,D}_{t \wedge \sigma}$ is bounded below for $t \leq \sigma$. Hence $M$ is a local martingale which is bounded below and hence a supermartingale.

Further, $(M^{C,D}_{t\wedge \tau^b_{i} \wedge\sigma})_{t_0 \leq t \leq 1}$ is bounded and hence a martingale.
\end{proof}

\begin{assumption}\label{ass:spidersupermartingale}
$\sG_i$ is such that $-\mathcal{G}_i(z)-z\mathcal{G}'_i(z)+ \mathcal{G}''_i(z)\leq 0$ on $(b^*,\infty)$.
\end{assumption}

\begin{lemma}
\label{lem:MgeqGi}
Suppose Assumption~\ref{ass:spidersupermartingale} holds. Then $\sG_i(z) \leq A(b^*) \phi(z) + B_i(b^*) \psi(z)$ for all $z \in (0,\infty)$.
\end{lemma}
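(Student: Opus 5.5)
Write $g(z) = A(b^*)\phi(z) + B_i(b^*)\psi(z)$ and $\Delta(z) = g(z) - \sG_i(z)$. The goal is to show $\Delta \geq 0$ on $(0,\infty)$. I would treat $z \le b^*$ and $z > b^*$ separately.

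\textbf{The region $(0,b^*]$.} This is essentially Corollary~\ref{cor:valuebddpayoffs}; I would reprove it in the same way as Corollary~\ref{Cor:valuebddpayoffs}. By \eqref{eq:ABvaluematching} applied with $b=z$, we have $\sG_i(z) = A(z)\phi(z) + B_i(z)\psi(z)$. Hence
\[ \Delta(z) = (A(b^*)-A(z))\phi(z) + (B_i(b^*)-B_i(z))\psi(z). \]
Now $B_i = \Gamma$ is maximised at $b^*$ by Assumption~\ref{ass:Gamma_N}. By \eqref{eq:defAi}, $A(b) = \sD/\sE + (p_i/\sE)\Gamma(b)$ with $\sE>0$, so $A$ is also maximised at $b^*$. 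Since $\phi,\psi \geq 0$ on $(0,\infty)$, this gives $\Delta \geq 0$ there. (This step does not need Assumption~\ref{ass:spidersupermartingale}.)

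\textbf{The region $(b^*,\infty)$.} Here I would use an ODE comparison. Since $g$ solves $g''-zg'-g=0$, define $\mathcal{K}u = u'' - zu' - u$. Then $\mathcal{K}\Delta = -\mathcal{K}\sG_i \geq 0$ on $(b^*,\infty)$ by Assumption~\ref{ass:spidersupermartingale}. At $b^*$ we have $\Delta(b^*)=0$ by value matching, and $\Delta'(b^*)=0$ by smooth fit, Lemma~\ref{lem:smoothfit}. Setting $w = e^{-z^2/2}\Delta'$, the operator can be written as $(e^{-z^2/2}\Delta')' = e^{-z^2/2}(\Delta'' - z\Delta') = e^{-z^2/2}(\mathcal{K}\Delta + \Delta)$. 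I would argue by contradiction. Suppose $\Delta<0$ somewhere in $(b^*,\infty)$, and let $z_0 = \inf\{z>b^*:\Delta(z)<0\}$. Because $\Delta'(b^*)=0$ and $\Delta''\geq z\Delta'+\Delta$, a Grönwall/maximum-principle argument shows $\Delta$ cannot go negative. Concretely, let $m(z) = \min_{[b^*,z]}\Delta$; on any interval where $\Delta \geq m$, $w$ satisfies $w' \geq e^{-z^2/2} m$. Integrating twice from $b^*$ gives $\Delta(z) \geq m\cdot c(z)$, where $c(z)\to 0$ as $z \downarrow b^*$. This forces $m=0$ locally, and the argument is then propagated to all of $(b^*,\infty)$.

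A cleaner alternative, which I would try first, is to write $\Delta = \phi\, u$ (since $\phi>0$). Then $\mathcal{K}(\phi u) = \phi(u'' + z u') \geq 0$, so $(e^{z^2/2}u')' \geq 0$. With $u(b^*)=u'(b^*)=0$, this gives $e^{z^2/2}u'(z) \geq 0$ for $z>b^*$, so $u$ is nondecreasing from $0$ and hence $u\geq 0$. This is rigorous and avoids the contradiction argument entirely: the key identity is $\mathcal{K}(\phi u) = \phi(u''+zu')$, which uses $\phi'=z\phi$ and $\mathcal{K}\phi=0$.

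\textbf{Main obstacle.} The delicate point is the boundary data at $b^*$. One needs both $\Delta(b^*)=0$ and $\Delta'(b^*)=0$, and the latter relies on $b^*$ being an interior maximiser of $\Gamma$ (this is where $b^*>0$ in Assumption~\ref{ass:Gamma_N} is used), exactly as in Lemma~\ref{lem:smoothfit}. I would check that the differentiability of $\Gamma$ there follows from $\sG_i \in C^2$. Combining the two regions gives the lemma on all of $(0,\infty)$.
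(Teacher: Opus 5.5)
Your proposal is correct. On $(0,b^*]$ you argue as the paper does, via Corollary~\ref{cor:valuebddpayoffs}. On $(b^*,\infty)$ you take a different route.

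\textbf{The paper's route.} It argues probabilistically. It pastes $h=A(b^*)\phi+B_i(b^*)\psi$ on $(0,b^*)$ to $\sG_i$ on $[b^*,\infty)$ to obtain a $C^1$ function $g$, using smooth fit. Up to the exit time $H$ of $Z$ from an interval $(a,c)\ni b^*$, the process $\sqrt{1-t}\,g(Z_t)$ is a supermartingale and $\sqrt{1-t}\,h(Z_t)$ is a martingale. It then compares the two via optional sampling, using $g(a)=h(a)$ and $\EE[\sqrt{1-H}\,\mathbb{I}_{\{Z_H=c\}}]>0$.

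\textbf{Your route.} You do a deterministic ODE comparison. The substitution $\Delta=\phi u$ turns $\Delta''-z\Delta'-\Delta\ge 0$ into $(e^{z^2/2}u')'\ge 0$. The Cauchy data $u(b^*)=u'(b^*)=0$ from value matching and smooth fit then give $u'\ge 0$ and hence $u\ge 0$.

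\textbf{Comparison.} Both proofs rely on smooth fit and on Assumption~\ref{ass:spidersupermartingale}. The paper uses smooth fit to remove the local-time term at $b^*$ and the assumption to sign the drift. You use smooth fit as initial data and the assumption as a differential inequality. Your version needs only $\sG_i\in C^1$ at $b^*$ and twice differentiable beyond it. It avoids It\^o's formula for a pasted function, exit-time properties and optional sampling. The paper's version keeps the argument inside the martingale framework of the verification theorem. Given the substitution, your Gr\"onwall sketch is unnecessary.

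\textbf{Your first step already proves the whole lemma.} The identity $\sG_i(z)=A(z)\phi(z)+B_i(z)\psi(z)$ from \eqref{eq:ABvaluematching} holds for every $z>0$, not only $z\le b^*$. Combined with \eqref{eq:defAi}, it gives
\[ \Delta(z)=\bigl(\Gamma(b^*)-\Gamma(z)\bigr)\Bigl(\psi(z)+\frac{p_i}{\sE}\,\phi(z)\Bigr)\ge 0 \qquad\text{for all } z>0, \]
because $\sE>0$ and $b^*$ is a \emph{global} maximiser of $\Gamma$ under Assumption~\ref{ass:Gamma_N}. Nothing in that step uses $z\le b^*$. So this lemma needs neither smooth fit nor Assumption~\ref{ass:spidersupermartingale}, which is sharper than what either your second step or the paper's proof delivers.
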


\begin{proof}
The result for $z \in (0,b^*]$ has already been proved in Corollary~\ref{cor:valuebddpayoffs}. So suppose $z>b^*>0$. Let $h(z) = A(b^*) \phi(z) + B_i(b^*) \psi(z)$. Note that we have $\sG_i(b^*) = h(b^*)$ and $\sG'_i(b^*) = h'(b^*,\theta_i)$ by Lemma~\ref{lem:smoothfit}.

Fix $0<a<b^*<c$ and for $Z = (Z_t)_{t_0 \leq t \leq 1}$ with $Z_{t_0}=b^*$ and $Z_t = \frac{R_t}{\sqrt{1-t}}$ let $H= \inf \{ u \geq t_0 : Z_u \notin (a,c) \}$. Note that $\PP(Z_H=a) = 1 - \PP(Z_H=c) \in (0,1)$.

Since $h$ solves $h'' - zh' -h =0$ we have that $N^h=(N^h_{t})_{t_0 \leq t \leq 1}$ given by $N^h_t = \sqrt{1-t \wedge H}h(Z_{t\wedge H})$ is a martingale.

Define $g$ by $g = h$ on $(0,b^*)$ and $g = \sG_i$ on $[b^*,\infty)$. Then $g$ is $C^1$, in particular it is continuously differentiable at $b^*$. Then $N^g=(N^g_{t \wedge H})_{t_0 \leq t \leq 1}$ given by $N^g_t = \sqrt{1-t \wedge H}g(Z_{t\wedge H})$ is a bounded supermartingale:
\begin{equation*}
    \begin{split}
     dN^g_t &= I_{t \leq H} \bigg( (h'(Z_t) \mathbb{I}_{\{Z_t < b^*\}} + g'(Z_t) \mathbb{I}_{\{Z_t \geq b^*\}}) dB_t  \\
     &\hspace{+4cm}+ \frac{1}{2\sqrt{1-t}}( \sG_i''(Z_t) - Z_t \sG'_i(Z_t) - \sG_i(Z_t) ) \mathbb{I}_{\{ Z_t \geq b^*\}} dt \bigg) \\
     &\hspace{+7cm}\leq (h'(Z_t) \mathbb{I}_{\{Z_t < b^*\}} + g'(Z_t) \mathbb{I}_{\{Z_t \geq b^*\}}) dB_t.   
    \end{split}
\end{equation*}

Then, since $H<1$ with probability one, by the Optional Sampling Theorem, 
$$\EE[\sqrt{1-H}g(Z_H) | \sF_{t_0}] \leq \sqrt{1-t_0} g(b^*) = \sqrt{1-t_0} h(b^*) = \EE[\sqrt{1-H}h(Z_H) | \sF_{t_0}]$$
and so
\begin{equation*}
    \begin{split}
      &g(a) \EE[ \sqrt{1-H} \mathbb{I}_{\{Z_{H}=a\}} | \sF_{t_0}] + g(c) \EE[ \sqrt{1-H} \mathbb{I}_{\{Z_{H}=c\}} | \sF_{t_0}] \\
      &\hspace{+5cm}\leq h(a) \EE[ \sqrt{1-H} \mathbb{I}_{\{Z_{H}=a\}} | \sF_{t_0}] + h(c) \EE[ \sqrt{1-H} \mathbb{I}_{\{Z_{H}=c\}} | \sF_{t_0}]. 
    \end{split}
\end{equation*}
Since $\EE[ \sqrt{1-H} \mathbb{I}_{\{Z_{H}=c\}} | \sF_{t_0}]>0$ and $g(a)=h(a)$ we conclude that $\sG_i(c) =g(c) \leq h(c) = A(b^*)\phi(c) + B_i(b^*) \psi(c)$.

\end{proof}

\begin{theorem}[Verification theorem]
\label{thm:spiderVerification}
    Suppose that for $j \neq i$, Player $j$ uses the threshold strategy $\tau_j :=\inf\{s\geq 0: R_{s}\geq  d_j \sqrt{1-s}, \Theta_{s} = \theta_j \}$. Under Assumptions~\ref{ass:Gamma_N} and \ref{ass:spidersupermartingale}, the value function $\overline{V} = \overline{V}(t,r,\theta)$ defined in \eqref{Valuefunc_P1_E} coincides with the value function $V_{*} = V_{*}(t,r,\theta)$ given by \eqref{Expression_VE_star}. Moreover, the stopping time
    \begin{equation*}
        \tau^{*}:=\inf\{s\geq 0: R_{s}\geq b^*\sqrt{1-s}, \Theta_{s} = \theta_i\},
    \end{equation*}
    where $b^*>0$ is the maximizer of $\Gamma \equiv B_i$, is optimal for Player $i$.
\end{theorem}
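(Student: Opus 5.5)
The plan follows the two-inequality structure of Theorem~\ref{Verification_theorem_2player}, with the spider-specific martingale lemma and Lemma~\ref{lem:MgeqGi} taking the place of the It\^o--Tanaka computation.

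Fix $t_0\in[0,1)$ and a starting point $(r,\theta)$, and write $\sigma=\sigma^{t_0}_d$. The first step is to show that $(V_*(t\wedge\sigma,R_{t\wedge\sigma},\Theta_{t\wedge\sigma}))_{t_0\le t\le 1}$ is a supermartingale. Off the ray $\theta_i$ and before $\sigma$, $V_*$ coincides with $\sqrt{1-t}\,[A(b^*)\phi(Z_t)+B_j(b^*)\psi(Z_t)]$. Since $\sum_j p_jB_j=0$ by construction, this piece is of the form $M^{C,D}$ with $C=A(b^*)$ and $D=(B_j(b^*))_j$, and the spider martingale lemma shows it is a local martingale. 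On the ray $\theta_i$ the function $V_*$ is the $C^1$ pasting $g$ of $h=A\phi+B_i\psi$ with $\sG_i$ at $b^*$; this is smooth fit, by Lemma~\ref{lem:smoothfit}. Applying It\^o's formula to $\sqrt{1-t}\,g(Z_t)$ while on ray $i$ gives a $dB$ term plus the drift $\frac{1}{2\sqrt{1-t}}(\sG_i''-Z\sG_i'-\sG_i)\mathbb{I}_{\{Z_t\ge b^*\}}\,dt$, which is $\le0$ by Assumption~\ref{ass:spidersupermartingale}.

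To glue these together across visits to the vertex, I would redo the excursion argument in the proof of the spider martingale lemma. Split at $T_0^{t_0}$. Before $T_0^{t_0}$ the process stays on a single ray, where the preceding computation applies. After $T_0^{t_0}$ the new ray is independent of the radial part, so the $\psi$-contributions average to $\sum_j p_jB_j\psi=0$. The one asymmetric term, the excess of $\sqrt{1-t}\,h$ over $\sqrt{1-t}\,g$ on ray $i$, is nonnegative by Lemma~\ref{lem:MgeqGi}. Equivalently, I can write $V_*=M^{C,D}-\sqrt{1-t}\,(h-g)(Z_t)\mathbb{I}_{\{\Theta=\theta_i\}}$, and I expect the subtracted term to be a local submartingale vanishing near the vertex.

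Next I would show the supermartingale property is a true one. $V_*$ is bounded below up to $\sigma$, because $\sG_j$ is bounded below, the continuation-region values are bounded on compacts, and $\sqrt{1-t}\le1$. A local supermartingale that is bounded below is a supermartingale, as in Corollary~\ref{cor:M*}. For any $\tau\in\sT^{i,t_0}$, optional sampling then gives $V_*(t_0,r,\theta)\ge\EE[V_*(\tau\wedge\sigma,\cdot)]$. Using $V_*\ge\GG_i$ on ray $i$ (Lemma~\ref{lem:MgeqGi} together with Corollary~\ref{cor:valuebddpayoffs}) and $V_*=\GG_j$ at the stopping boundaries of the other players, I obtain $V_*\ge\overline V$. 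At $t=1$ both sides vanish, since $Z_1=0$ and $\sqrt{1-t}\to0$.

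For the reverse inequality, take $\tau^*=\tau^{b^*}_i$. Up to $\tau^*\wedge\sigma$ the process lies in the continuation region, so $V_*$ equals $M^{C,D}$ there, and the stopped process is bounded because $Z$ is confined to $[0,\max(b^*,d_j)]$. Corollary~\ref{cor:M*} makes it a martingale. Value matching on the boundaries then gives $V_*(t_0,r,\theta)=\EE[\text{payoff under }\tau^*]\le\overline V$. When the starting point already lies in a stopping region the value is immediate. I expect the main obstacle to be the rigorous supermartingale argument across vertex visits when ray $i$ carries the non-harmonic piece $\sG_i$ above $b^*$, that is, making the excursion-decomposition argument work for a supermartingale rather than a martingale. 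My first attempt would be the decomposition $V_*=M^{C,D}-(\text{nonnegative term supported on ray }i\text{ above }b^*)$, bounding the subtracted term by applying optional sampling excursion by excursion.
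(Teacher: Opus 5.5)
Your proof is correct, but it is organised differently from the paper's. The paper never shows that $V_*$ itself is a supermartingale. It works instead with the harmonic extension $M^*=M^{A(b^*),(B_j(b^*))_j}$, which keeps $h=A(b^*)\phi+B_i(b^*)\psi$ on all of ray $i$ rather than switching to $\sG_i$ above $b^*$. This $M^*$ is a local martingale directly by part 3 of the spider martingale lemma, so the cross-vertex obstacle you single out never arises. It is bounded below up to $\sigma$, and it dominates every payoff by Lemma~\ref{lem:MgeqGi}. That lemma is the only place where smooth fit and Assumption~\ref{ass:spidersupermartingale} enter, through a one-ray optional sampling argument.

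You instead feed Lemma~\ref{lem:smoothfit} and Assumption~\ref{ass:spidersupermartingale} into an It\^o computation for $V_*$ and then glue across vertex visits. Your decomposition $V_*=M^{C,D}-W$ makes the gluing easy, because $W_t=\sqrt{1-t}\,(h-g)(Z_t)\mathbb{I}_{\{\Theta_t=\theta_i\}}$ vanishes whenever $Z_t\le b^*$, hence near the vertex. Cutting time at alternating entries into $\{\Theta=\theta_i,\,Z\ge b^*\}$ and returns to $\{Z\le b^*/2\}$ reduces everything to one-dimensional It\^o on ray $i$. There the drift of $W$, namely $-\frac{1}{2\sqrt{1-t}}(\sG_i''-Z\sG_i'-\sG_i)$, is nonnegative, and only this sign matters, not $W\ge 0$. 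Your excursion-splitting variant does use $W\ge0$: after $T_0^{t_0}$ you bound $V_*$ by the $M^{C,D}$ supermartingale, which is exactly the paper's device.

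What your route buys is the upper bound at starting points on ray $i$ with $R_{t_0}\ge b^*\sqrt{1-t_0}$. There the paper's argument only gives $\overline V\le M^*_{t_0}=\sqrt{1-t_0}\,h(Z_{t_0})$, which in general strictly exceeds $V_*=\GG_i$. So the paper's asserted identity $M^*_{t_0}=V_*(t_0,\cdot)$ fails in that region. Closing it needs an extra step: the supermartingale property of $\sqrt{1-t}\,\sG_i(Z_t)$ up to the hitting time of $b^*$, followed by the $M^*$ bound. Your supermartingale $V_*$ handles all starting points at once. What the paper's route buys is that all non-harmonic analysis is confined to a single ray, and the cross-vertex martingale property is inherited verbatim from the lemma.
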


\begin{proof}
  Fix $t_{0}\in[0,1)$. Note that for $j \neq i$, on $(\Theta_{t_0} = \theta_j, R_{t_0} \geq d_j \sqrt{1-t_0})$ we have $\tau_j^{t_0} = t_0$ and $\overline{V}(t,r,\theta) = \sqrt{1-t_0}\GG(t_0, r, \theta) = V_{*}(t_0, r, \theta)$.
  For other values of $(R_{t_0},\Theta_{t_0})$ we show that $V_{*}(t_{0},r,\theta)=\overline{V}(t_{0},r,\theta)$ by first proving $V_{*}(t_{0},r,\theta)\geq \overline{V}(t_{0},r,\theta)$, and then proving the reverse inequality.

Suppose that $(R_{t_0},\Theta_{t_0})$ is such that either $\Theta_{t_{0}} = \theta_j$ for $j \neq i$ and $R_{t_0} < d_j \sqrt{1- t_0}$ or $\Theta_{t_{0}} = \theta_i$ 
or $R_{t_0}=0$.
Let $M^* = (M^{A(b^*), (B_j(b^*))_{1 \leq j \leq N}}_t)_{t_0 \leq t \leq 1}$.
Then
\[ 
M^*_t  =  \sqrt{1-t} A(b^*) \phi\left( \frac{R_t}{\sqrt{1-t}} \right)
 + \sum_{j=1}^N B_j (b^*) \sqrt{1-t} \psi\left( \frac{R_t}{\sqrt{1-t}} \right) \mathbb{I}_{ \{ \Theta_t = \theta_j \} }
\] 
and $(M^*_{t \wedge \sigma})_{t_0 \leq t \leq 1}$ is a supermartingale by Corollary~\ref{cor:M*}.

For any $\tau \in \sT^{i,t_0}$, 
on the set $t_0 \leq \sigma$ we have $M^*_{t_0} \geq \EE[ M^*_{\tau \wedge \sigma} | \sF_{t_0} ]$. Then, using that the payoff is zero on $\{ \tau = \sigma = 1 \}$ and $Z_{\tau^{t_0}_j} = d_j$ on $\tau^{t_0}_j = \sigma$,
\begin{eqnarray*}
M^*_{\tau \wedge \sigma} & = &
\mathbb{I}_{\{ \tau < \sigma\} } \sqrt{1- \tau} \left( A(b^*) \phi ( Z_\tau ) + B_i(b^*) \psi(Z_\tau) \right) \\
  & &  + \sum_{j \neq i} \mathbb{I}_{\{\tau^{t_0}_j = \sigma < \tau\}} \sqrt{1- \tau^{t_0}_j} ( A(d_j) \phi(Z_{\tau^{t_0}_j}) +  B_j (d_j) \psi( Z_{\tau^{t_0}_j} ) ) \\
 & \geq &
\mathbb{I}_{\{ \tau < \sigma \}} \sqrt{1- \tau} \sG_i(Z_\tau) + \sum_{j \neq i} \mathbb{I}_{\{\tau^{t_0}_j = \sigma < \tau\}}  \sqrt{1- \sigma} \sG_j(d_j) \\
& = & \mathbb{I}_{ \{ \tau < \sigma \} } \GG_i(\tau, R_{\tau},\Theta_\tau) + \sum_{j=1, j \neq i}^N \mathbb{I}_{ \{ \tau^{t_0}_j = \sigma < \tau \} }  \GG_j(\tau^{t_0}_j,R_{\tau^{t_0}_j},\theta_j) \\
\end{eqnarray*}
where we use Lemma~\ref{lem:MgeqGi} for the inequality. 
Then, on $t_0 < \sigma$,
\begin{eqnarray*} \overline{V}((t_0,R_{t_0}, \Theta_{t_0}) & = & \sup_{\tau \in \sT_i^{t_0}} \EE \left[ \left. \mathbb{I}_{ \{ \tau < \sigma \} } \GG_i(\tau,R_{\tau},\theta_i) + \sum_{j=1, j \neq i}^N \mathbb{I}_{ \{ \tau^{t_0}_j = \sigma  < \tau \} } \GG_j(\tau^{t_0}_j, R_{\tau^{t_0}_j},\theta_j) \right| \sF_{t_0} \right] \\
& \leq & M^*_{t_0}  = V_{*}(t_0,R_{t_0}, \Theta_{t_0}).
\end{eqnarray*}

Now we aim to show the reverse inequality i.e. $\overline{V}(t_{0},r,\theta)\geq V_{*}(t_{0},r,\theta)$.
This will follow if $M^*_{t \wedge \tau^* \wedge \sigma}$ is a martingale where $\tau^* = \inf \{ u \geq t_0: R_u \geq b^*, \Theta_u =\theta_i \}$ and if $A(b^*) \phi(b^*) + B_i(b^*) \psi(b^*) = \sG(b^*)$. For $t_0 \leq t \leq \sigma_d \wedge \tau^*$ the martingale property follows from the second part of Corollary~\ref{cor:M*}, and the identity at $b^*$ follows from \eqref{eq:ABvaluematching} applied at $b^*$.

\end{proof}

\subsection{Nash Equilibria in the $N$-Player Optimal Stopping Game}
\label{ssec:spiderNE}

In Section~\ref{ssec:gameformulation} we formulated the multi-player version of the optimal stopping game considered in Section 2. Then, in Section~\ref{ssec:gameosp}, fixing all other player strategies, we found that the optimal response for Player $i$ is the global maximizer of $\Gamma_{i,b_{-i}}$ is given by
$\Gamma_{i,b_{-i}}:(0,\infty) \to \RR$ and
\begin{equation}
\label{eq:spiderGamma} \Gamma_{i,b_{-i}}(b_i) = \frac{ \sE_{-i} \sG^i_i(b_i) - \sD_{-i} \phi(b_i)}{\sE_{-i} \psi(b_i) + p_i \phi(b_i)}.  \end{equation}
Here $b_i$ denotes the threshold Player $i$ chooses and $b_{-i}$ denotes the thresholds chosen by all other players. To simplify the notation in the following sections, we write $\sG_i:=\sG_i^i$.
Note that $i$ is an arbitrary index from $\{1,...,N\}$. In this section, our goal is to establish sufficient conditions for the existence of a Nash equilibrium in this $N$-player game.

Let $\BB$ denote the set of $N$-tuples with elements in $(0,\infty)$ and let $\BB_{-i}$ be the set of elements of $\BB$ with the $i^{th}$ element omitted. In particular,
\begin{eqnarray*}
\BB & = & \{ (b_1, \ldots, b_k, \ldots, b_N) : 0 <  b_k < \infty, 1 \leq k \leq N \} \\
\BB_{-i} & = & \{ (b_1, \ldots, b_{i-1}, b_{i+1}, \ldots, b_N) : 0 <  b_k < \infty, k \neq i, 1 \leq k \leq N \}    
\end{eqnarray*}
where $\BB_{-1} = \{ (b_2, \ldots, b_N) : 0 <  b_k < \infty, 2 \leq k \leq N \}$ and $\BB_{-N} = \{ (b_1, \ldots, b_{N-1}) : 0 <  b_k < \infty, 1 \leq k \leq N-1 \}$.

Let $h_i$ be given by $h_i(z)=\sG_i'(z) - z \sG_i(z)$.

\begin{assumption}\label{ASS_supermatingale_Nplayer}
   Suppose that for any $i,j\in\{1,...,N\}$, 
   \begin{enumerate}
       \item $\mathcal{G}'_i(z)>0$, and for any $j\neq i$, $\mathcal{G}_j^{i}(z)<0$ for all $z\in(0,\infty)$.
       \item There exists a solution $\hat{b}_i\in(0,\infty)$ to $h_i(\cdot)=0$. Further, we assume that $h_i$ is unimodal on $(0,\infty)$, and has a unique maximiser $\hat{z}_i \in (0,\hat{b}_i)$. 
       \item  Given any $b_{-i}\in \BB_{-i}$, we have that both $\lim_{z\to0}\Gamma_{i,b_{-i}}(z)$ and $\lim_{z\to0}h_i(z)$ exist in $[-\infty,\infty)$, and satisfy $\lim_{z\to0}\Gamma_{i,b_{-i}}(z)<\lim_{z\to0}h_i(z)$.
   \end{enumerate}
   
\end{assumption}

\begin{theorem}\label{Theorem_multi_NE}
    Under Assumption \ref{ASS_supermatingale_Nplayer}  there exists a Nash equilibrium for the multi-player optimal stopping game introduced in Section~\ref{ssec:gameformulation}.
\end{theorem}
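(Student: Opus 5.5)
We would mirror the proof of Theorem~\ref{Main_Theorem_NE}, replacing Miranda's theorem in two dimensions by its $N$-dimensional version (equivalently, Brouwer's fixed point theorem on a box). The plan has three steps: show that for each fixed $b_{-i}$ the best response of Player $i$ is a unique interior critical point $b_i^*(b_{-i})\in(\hat z_i,\hat b_i)$ that satisfies Assumptions~\ref{ass:Gamma_N} and \ref{ass:spidersupermartingale}; find a simultaneous zero of the maps $T_i(b)=h_i(b_i)-\Gamma_{i,b_{-i}}(b_i)$ on a suitable box; and conclude with Theorem~\ref{thm:spiderVerification}.

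\textbf{Step 1: best responses.} Fix $b_{-i}\in\BB_{-i}$. By Assumption~\ref{ASS_supermatingale_Nplayer}(1), $\sD_{-i}<0$ and $\sE_{-i}>0$. Also $h_i(\hat b_i)=0$ together with $\sG_i'(\hat b_i)>0$ gives $\sG_i(\hat b_i)>0$. Hence $\Gamma_{i,b_{-i}}(\hat b_i)>0=h_i(\hat b_i)$. As in the proof of Lemma~\ref{lem:smoothfit}, we have
\[ \Gamma_{i,b_{-i}}'(b)=C_i(b)\bigl(h_i(b)-\Gamma_{i,b_{-i}}(b)\bigr), \qquad C_i(b)>0. \]
Set $\Upsilon_i=\Gamma_{i,b_{-i}}-h_i$. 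By Assumption~\ref{ASS_supermatingale_Nplayer}(3), $\Upsilon_i<0$ near $0$, and $\Upsilon_i(\hat b_i)>0$. At any root of $\Upsilon_i$ we have $\Upsilon_i'=-h_i'$. Repeating the argument of Proposition~\ref{Suff_conditions_ass5}, using the unimodality of $h_i$, shows that $\Upsilon_i$ has a unique root $b_i^*\in(\hat z_i,\hat b_i)$, and that this root is an upcrossing. Consequently $\Gamma_{i,b_{-i}}$ increases before $b_i^*$ and decreases after it, which is Assumption~\ref{ass:Gamma_N}. Moreover $h_i'\le0$ on $[b_i^*,\infty)$, which is exactly Assumption~\ref{ass:spidersupermartingale}.

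\textbf{Step 2: Miranda.} Work on the compact box $K=\prod_i[\epsilon_i,\hat b_i]$. On the face $b_i=\hat b_i$ we have $T_i<0$ for every $b_{-i}$, by Step 1. The delicate part is the face $b_i=\epsilon_i$: we need $T_i(\epsilon_i,b_{-i})>0$ uniformly in $b_{-i}\in\prod_{j\ne i}[\epsilon_j,\hat b_j]$, whereas Assumption~\ref{ASS_supermatingale_Nplayer}(3) only gives a pointwise limit as $z\to0$.

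\emph{Why extending the box to $0$ is delicate.} At $b_i=0$ we have $\psi(b_i)=0$, so
\[ \Gamma_{i,b_{-i}}(0)=\frac{\sE\,\sG_i(0)-\sD}{p_i}. \]
This is continuous in $b_{-i}$ for $b_j>0$. As $b_j\to0$, however, $\sE$ and $\sD$ blow up like $1/\psi(b_j)$, so the box cannot simply be extended to $0$.

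\emph{Plan for this obstacle.} First, note that $\Gamma_{i,b_{-i}}(0)$ and $h_i(0)$ are continuous in $b_{-i}$, and that on each compact set of $b_{-i}$ with all $b_j\ge\epsilon$ the inequality is strict pointwise. Compactness together with joint continuity of $(z,b_{-i})\mapsto\Gamma_{i,b_{-i}}(z)$ near $z=0$ then gives a uniform $\epsilon_i>0$. The circularity is that $\epsilon_i$ depends on the $\epsilon_j$ chosen for the other players. To break it, we will use that the best response satisfies $b_i^*>\hat z_i>0$ for every $b_{-i}$. This suggests taking $\epsilon_j=\hat z_j$ and applying Miranda on $\prod_i[\hat z_i,\hat b_i]$. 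On the face $b_i=\hat z_i$, the unique root of $\Upsilon_i$ lies in $(\hat z_i,\hat b_i)$ and $\Upsilon_i<0$ before it. Hence $T_i(\hat z_i,b_{-i})>0$ for all $b_{-i}\in\BB_{-i}$, with no uniformity issue. This resolves the obstacle.

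\textbf{Step 3: conclusion.} $T=(T_1,\dots,T_N)$ is continuous on $\prod_i[\hat z_i,\hat b_i]$. Its components satisfy $T_i>0$ on the face $\{b_i=\hat z_i\}$ and $T_i<0$ on the face $\{b_i=\hat b_i\}$. The $N$-dimensional Miranda theorem therefore yields $b^*$ with $T(b^*)=0$. By Step 1 the unique turning point is the global maximiser, so each $b_i^*$ maximises $\Gamma_{i,b^*_{-i}}$, and Assumptions~\ref{ass:Gamma_N} and \ref{ass:spidersupermartingale} hold. Theorem~\ref{thm:spiderVerification} then shows that $\tau_i=\inf\{s:\Theta_s=\theta_i,R_s\ge b_i^*\sqrt{1-s}\}$ is optimal for Player $i$ against the others, from every starting point. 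Hence these strategies form a (subgame perfect, threshold-type) Nash equilibrium.
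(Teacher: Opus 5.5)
Your proposal is correct and follows essentially the same route as the paper. You show that for each fixed $b_{-i}\in\BB_{-i}$ the function $\Upsilon_i=\Gamma_{i,b_{-i}}-h_i$ has a unique upcrossing root in $(\hat z_i,\hat b_i)$, which gives Assumptions~\ref{ass:Gamma_N} and \ref{ass:spidersupermartingale}; you then apply the $N$-dimensional Miranda theorem on the box $\prod_i[\hat z_i,\hat b_i]$ and conclude with Theorem~\ref{thm:spiderVerification}, exactly as the paper does. The detour about a uniform $\epsilon_i$ is unnecessary, since (as you eventually note) the sign of $T_i$ on the face $b_i=\hat z_i$ follows directly from the location of that root.
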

\begin{proof}
    Assumption \ref{ASS_supermatingale_Nplayer} implies that $\sG_i'(\hat{b}_i)>0$, and hence, since $h_i(\hat{b}_i)=\sG_i'(\hat{b}) - z \sG_i(\hat{b})=0$, we have $\sG_i(\hat{b}_i)>0$. Assumption \ref{ASS_supermatingale_Nplayer} also implies that $\sG^i_j(\hat{b}_i)<0$ and hence $\sD_{-i}<0$. Thus, we conclude from \eqref{eq:spiderGamma} that $\Gamma_{i,b_{-i}}(\hat{b}_{i})>0$.

    Define $\Upsilon_i(z)=\Gamma_{i,b_{-i}}(z)- h_i(z)$. From the fact that $h_i(\hat{b}_i)=0$ we have that $\Upsilon_i(\hat{b}_i)>0$. Further, since by assumption $\lim_{z\to0}\Gamma_{i,b_{-i}}(z) < \lim_{z\to0}h_i(z)$ we have that $\lim_{z\to0}\Upsilon_i(z)<0$. Then, by the Intermediate Value Theorem, we know that there exists a point $b^*_i\in (0,\hat{b}_i)$ such that $\Upsilon_i(b^*_i)=0$. Moreover. with the same argument given in Proposition \ref{Suff_conditions_ass5}, 
    and using
    \[ \Gamma_{i,b_{-i}}'(z) = \frac{\sE_{-i}}{p_{i} \phi(z) + \sE_{-i} \psi(z)} \left( h_i(z) - \Gamma_{i,b_{-i}}(z) \right) = - \frac{\sE_{-i}}{p_{i} \phi(z) + \sE_{-i} \psi(z)} \Upsilon_i(z) \]
    we have that $\Gamma'_{i,b_{-i}}(z)>0$ for $z\in(0,b ^*_i)$ and $\Gamma'_{i,b_{-i}}(z)<0$ for $z\in(b^*_i,\infty)$. Moreover,
    $b^*_i\in(\hat{z}_i,\hat{b}_i)$ where $\hat{z}_i$ is the unique maximiser of $h_i$ on $(0,\hat{b})$. Hence, we have that $\Upsilon_i(\hat{z}_i)<0$ and $\Upsilon_i(\hat{b}_i)>0$.

    For the rest of the proof we re-express $\Gamma_{i,b_{-i}}$ as a multivariate function $\Gamma_i:(0,\infty)^{N}\mapsto \mathbb{R}$ given by
    \begin{equation*}
      \begin{split}
            \Gamma_{i}(b_1,...,b_N) = \frac{ \sum_{j \neq i, j=1}^{N} \frac{p_{j}\phi(b_{j})}{\psi(b_j)} \sG_i(b_i) - \sum_{j \neq i,j=1}^{N} \frac{p_{j}\sG^i_{j}(b_{j})}{\psi(b_j)} \phi(b_i)}{\sum_{j \neq i, j=1}^{N} \frac{p_{j}\phi(b_{j})}{\psi(b_j)} \psi(b_i) + p_i \phi(b_i)}. 
        \end{split}  
    \end{equation*}

    Define $\DD=\{\bar{b}=(b_1,...,b_N)\in\mathbb{R}^N: \forall i\in\{1,...,N\}, \hat{z}_i\leq b_i\leq \hat{b}_i\} \subset \BB$. Moreover, let 
    \begin{equation*}
        \DD_{i}^{-}:=\{\bar{b}\in \DD: b_i=\hat{z}_i\}, \hspace{+0.5cm} \DD_{i}^{+}:=\{\bar{b}\in \DD: b_i=\hat{b}_i\}.
    \end{equation*}
    For each $i\in\{1,...,N\}$ define $\tilde{\Upsilon}_i:\DD\mapsto\mathbb{R}$, by $\tilde{\Upsilon}_i(b_1,...,b_N):=\Gamma_{i}(b_1,...,b_N)-h_i(b_i)$. 
    What we have proved so far can be summarized as: for any $\bar{b}\in\DD_{i}^{-}$, $\tilde{\Upsilon}_i(\bar{b})<0$;  for any $\bar{b}\in\DD_{i}^{+}$, $\tilde{\Upsilon}_i(\Bar{b})>0$. Let $\tilde{\Upsilon}:\DD\mapsto\mathbb{R}^N$ be given by $\tilde{\Upsilon}(\bar{b})=(\tilde{\Upsilon}_1(\Bar{b}),...,\tilde{\Upsilon}_N(\Bar{b}))$; then $\tilde{\Upsilon}$ is continuous and $\DD$ is a compact set.  Hence, applying Miranda's Theorem \cite{Miranda1940}, we know that there exists $\bar{b}^*\in \DD$ such that $\tilde{\Upsilon}(\bar{b}^*)= (\tilde{\Upsilon}_1(\bar{b}^*),...,\tilde{\Upsilon}_N(\bar{b}^*))=(0,...,0)$.
    In particular, for each $k$, $\tilde{\Upsilon}_k(\bar{b}^*)=0$ and $\bar{b}^*_k$ is the maximiser of $\Gamma_{k,\bar{b}^*_{-k}}(b)$.

It remains to show that $\bar{b}^*$ can be used to define a Nash equilibrium. Let $\tau^*_k = \inf \{ R_u \geq \bar{b}^*_k, \Theta_u = \theta_k \}.$  With the same argument given in Proposition \ref{Suff_conditions_ass5}, Assumption \ref{ASS_supermatingale_Nplayer} implies Assumption \ref{ass:Gamma_N} and \ref{ass:spidersupermartingale}.
By Theorem~\ref{thm:spiderVerification}, conditional on the fact that for $j \neq k$ Player $j$ uses the stopping rule $\tau^*_j = \inf \{ R_u \geq \bar{b}^*_j, \Theta_u = \theta_j \}$ we have that $\tau^*_k$ is optimal for Player $k$. Hence, $(\tau^*_j)_{1 \leq j \leq N}$ is a Nash equilibrium.
\end{proof}

\subsubsection{An Example with linear payoffs}
Now we assume that the payoff functions are linear. More specifically, we let $\mathcal{G}_{j}^{i}(z)=Q^{i}_{j}z-\alpha_i\delta_{ij}$ where $(Q_{j}^{i})_{1\leq i,j\leq N}$ is a constant matrix in $\mathbb{R}^{N}\times\mathbb{R}^{N}$ with $Q^{i}_{i}>0$ and $Q^{i}_{j}<0$ when $j\neq i$. Moreover, $\delta_{ij}=1$ if $i=j$ and zero otherwise, and $\alpha=(\alpha_i)_{1\leq i\leq N}$ is a constant vector in $(0,\infty)^{N}$. Our goal is to establish sufficient conditions for the existence of a Nash equilibrium when the payoffs are linear. 

\begin{theorem}
    There exists a Nash equilibrium in the multi-player version of the optimal stopping game with linear payoffs, if for every $i\in\{1,...,N\}$ 
    \begin{equation}
    \label{eq:spidersufficient}
        p_iQ_i^i+ \sum_{j\neq i,j=1}^N p_jQ_j^i\geq0.
    \end{equation}
\end{theorem}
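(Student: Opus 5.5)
The plan is to deduce the result directly from Theorem~\ref{Theorem_multi_NE}. It therefore suffices to check that, for the linear payoffs $\sG^i_j(z) = Q^i_j z - \alpha_i \delta_{ij}$, each of the three parts of Assumption~\ref{ASS_supermatingale_Nplayer} holds whenever \eqref{eq:spidersufficient} holds for every $i$. Fix $i$ throughout and write $\sG_i = \sG^i_i$, so that $\sG_i(z) = Q^i_i z - \alpha_i$.

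\emph{Parts 1 and 2.} For part 1, $\sG_i'(z) = Q^i_i > 0$, and for $j \neq i$ we have $\sG^i_j(z) = Q^i_j z < 0$ on $(0,\infty)$, since $Q^i_j<0$. For part 2, $h_i(z) = Q^i_i - z(Q^i_i z - \alpha_i) = Q^i_i + \alpha_i z - Q^i_i z^2$ is a downward parabola. It has a unique positive root $\hat b_i$, because $h_i(0) = Q^i_i > 0$ and $h_i \to -\infty$. It has a unique maximiser $\hat z_i = \alpha_i/(2Q^i_i)$, which lies in $(0,\hat b_i)$ because $h_i$ is still positive there. Finally $h_i'$ is positive on $(0,\hat z_i)$ and negative beyond, so $h_i$ is unimodal.

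\emph{Part 3: the limits at zero.} Fix $b_{-i} = (d_j)_{j\neq i} \in \BB_{-i}$. Then $\sE_{-i} = \sum_{j\neq i} p_j \phi(d_j)/\psi(d_j)$ is strictly positive and finite, and $\sD_{-i} = \sum_{j \neq i} p_j Q^i_j d_j/\psi(d_j)$ is finite. Using $\phi(0)=1$ and $\psi(0)=0$ in \eqref{eq:spiderGamma}, I get
\[
\lim_{z\to 0}\Gamma_{i,b_{-i}}(z) = \frac{-\alpha_i \sE_{-i} - \sD_{-i}}{p_i},
\qquad
\lim_{z \to 0} h_i(z) = Q^i_i .
\]
The required strict inequality between these limits is therefore equivalent to
\[
p_i Q^i_i + \sum_{j\neq i} p_j \frac{\alpha_i \phi(d_j) + Q^i_j d_j}{\psi(d_j)} > 0 .
\]

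\emph{Key step (the only nontrivial one).} The crux is to bound the sum in the last display uniformly in $b_{-i}$. I will use the elementary bound $\psi(d) \geq d$ for $d \geq 0$. This follows from $\psi(0)=0$ and $\psi'(d) = d\psi(d) + 1 \geq 1$, by \eqref{eq:phipsiderivaties} and positivity of $\psi$ on $(0,\infty)$. Since $Q^i_j < 0$ for $j \neq i$, this gives $Q^i_j d_j/\psi(d_j) \geq Q^i_j$. Also $\alpha_i \phi(d_j)/\psi(d_j) > 0$. Combining these, the left-hand side is strictly greater than $p_i Q^i_i + \sum_{j \neq i} p_j Q^i_j$, which is nonnegative by \eqref{eq:spidersufficient}. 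The strictness comes from the $\alpha_i$ term, so only the weak inequality in \eqref{eq:spidersufficient} is needed.

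With all three parts of Assumption~\ref{ASS_supermatingale_Nplayer} verified for every $i$, Theorem~\ref{Theorem_multi_NE} yields a Nash equilibrium.
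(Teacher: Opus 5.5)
Your proposal is correct and follows the paper's proof essentially step for step. You verify the three parts of Assumption~\ref{ASS_supermatingale_Nplayer} for the linear payoffs, compute the limits of $\Gamma_{i,b_{-i}}$ and $h_i$ at zero, and then use $d/\psi(d)\le 1$ together with $Q^i_j<0$ to reduce the required strict inequality to \eqref{eq:spidersufficient}. Your derivation of $\psi(d)\ge d$ from $\psi'(d)=d\psi(d)+1\ge 1$ fills in the bound the paper only states in passing, and your observation that strictness comes from the $\alpha_i$ term correctly explains why the weak inequality in the hypothesis suffices.
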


\begin{proof}
     This is a direct application of Theorem \ref{Theorem_multi_NE}. We need to show that Assumption~\ref{ASS_supermatingale_Nplayer}
     is satisfied.
     
     It is immediate that for any $i,j\in\{1,...,N\}$, $\mathcal{G}'_i(z)>0$ and $\mathcal{G}^i_j(z)<0$ for all $z\in(0,\infty)$. Moreover, with the specific linear payoffs of this example we have that $h_i(z)=-Q_i^i z^2+\alpha_i z+Q_i^i$. Therefore, $h_i$ has a unique root $\hat{b}_i= \hat{b}_{i}(Q_i^i, \alpha_i)=\frac{\alpha_i+\sqrt{\alpha_i^2 +4Q_i^i}}{2Q_i^i}$. Furthermore, $h_i$ is unimodal and has a unique maximiser $\hat{z}_i=\frac{\alpha_i}{2Q_i^i}$ on the interval $(0,\hat{b}_i]$.
    Given any $b_{-i}\in\DD_i$, we have that 
    $\Gamma_{i, b_{-i}}$ is given by
     \begin{equation*}
        \begin{split}
            \Gamma_{i, b_{-i}}(z) = \frac{ \sE_{-i} (Q_i^i z-\alpha_i) - \sD_{-i} \phi(z)}{\sE_{-i} \psi(z) + p_i \phi(z)}.
        \end{split}
    \end{equation*}
    Note that $\lim_{z\to0}\Gamma_{i,b_{-i}}(z)=\frac{-\sE_{-i}\alpha_i-\sD_{-i}}{p_i}$ and $\lim_{z\to0}h_i(z)=Q_i^i$. It only remains to find sufficient conditions such that $\lim_{z\to0}\Gamma_{i,b_{-i}}(z)<\lim_{z\to0}h_i(z)$: then Assumption \ref{ASS_supermatingale_Nplayer} is satisfied. It is equivalent to find sufficient conditions for the inequality $Q_i^i p_i+\sE_{-i}\alpha_i+\sD_{-i}>0$ to hold.
    Substituting expressions for $\sE_{-i}$ and $\sD_{-i}$, we want a sufficient condition for
    \begin{equation}\label{Multi_ineq_NE}
        p_i Q_i^i+ \alpha_i \sum_{j \neq i, j=1}^{N} \frac{p_{j}\phi(b_{j})}{\psi(b_j)} + \sum_{j \neq i,j=1}^{N} \frac{p_{j}Q_j^i b_j}{\psi(b_j)} >0,
    \end{equation}
    where $Q_i^i>0$ and $Q_j^i<0$ when $j\neq i$. Since $\frac{x}{\psi(x)}$ on $(0,\infty)$ we have that $\frac{Q_j^i b_j}{\psi(b_j)} > Q^i_j$ and then the left-hand-side of \eqref{Multi_ineq_NE} is bigger than the left hand side of \eqref{eq:spidersufficient}. Hence, \eqref{eq:spidersufficient} is sufficient for Assumption~\ref{ASS_supermatingale_Nplayer}.
    
\end{proof}

\printbibliography
\end{document}